\tikzset{snake it/.style={decorate, decoration=snake}}
\theoremstyle{plain}
\newtheorem{thm}{Theorem}[section]
\newtheorem{cor}[thm]{Corollary}
\newtheorem{lem}[thm]{Lemma}
\newtheorem{prop}[thm]{Proposition}
\newtheorem{conj}[thm]{Conjecture}
\newtheorem{question}[thm]{Question}
\theoremstyle{definition}
\newtheorem{defn}[thm]{Definition}
\theoremstyle{remark}
\newtheorem{rmk}[thm]{Remark}
\newcommand{\BC}{{\mathbb{C}}}
\newcommand{\BH}{{\mathbb{H}}}
\newcommand{\BP}{{\mathbb{P}}}
\newcommand{\BQ}{{\mathbb{Q}}}
\newcommand{\BZ}{{\mathbb{Z}}}
\newcommand{\CH}{{\mathcal H}}
\newcommand{\CJ}{{\mathcal J}}
\newcommand{\CK}{{\mathcal K}}
\newcommand{\CL}{{\mathcal L}}
\newcommand{\CM}{{\mathcal M}}
\newcommand{\CO}{{\mathcal O}}
\newcommand{\CP}{{\mathcal P}}
\newcommand{\FM}{{\mathfrak{M}}}
\newcommand{\Fp}{{\mathfrak{p}}}
\newcommand{\Fq}{{\mathfrak{q}}}
\newcommand{\FC}{{\mathfrak{C}}}
\newcommand{\FF}{{\mathfrak{F}}}
\newcommand{\td}{{\mathrm{td}}}
\DeclareFontFamily{OT1}{rsfs}{}
\DeclareFontShape{OT1}{rsfs}{n}{it}{<-> rsfs10}{}
\DeclareMathAlphabet{\curly}{OT1}{rsfs}{n}{it}
\newcommand\Hom{\operatorname{Hom}}
\newcommand{\Mbar}{\overline{\mathcal{M}}}
\newcommand{\Chow}{\mathrm{CH}}
\newcommand{\Coh}{\mathrm{Coh}}
\newcommand{\Gr}{\mathrm{Gr}}
\newcommand{\Mgint}{\overline{\mathcal{M}}_g^{\mathrm{int}}}
\newcommand{\Jbar}{\overline{J}}
\newcommand{\Mtl}{\overline{\mathcal{M}}^\mathrm{tl}}
\newcommand{\qs}{\mathrm{qs}}
\newcommand{\pic}{\mathfrak{Pic}}
\newcommand{\sfa}{\mathsf{a}}
\newcommand{\uniDR}{\mathsf{uniDR}}
\newcommand{\cc}{c}
\newcommand{\dr}{\mathsf{DR}}
\def\easycyrsymbol#1{\mathord{\mathchoice
  {\mbox{\fontsize\tf@size\z@\usefont{T2A}{\rmdefault}{m}{n}#1}}
  {\mbox{\fontsize\tf@size\z@\usefont{T2A}{\rmdefault}{m}{n}#1}}
  {\mbox{\fontsize\sf@size\z@\usefont{T2A}{\rmdefault}{m}{n}#1}}
  {\mbox{\fontsize\ssf@size\z@\usefont{T2A}{\rmdefault}{m}{n}#1}}
}}
\newcommand{\RB}{\easycyrsymbol{\CYRB}}
\let\@wraptoccontribs\wraptoccontribs
\begin{document}
\title[The intrinsic cohomology ring of the universal compactified Jacobian]{The intrinsic cohomology ring of the universal compactified Jacobian over the moduli space of stable curves}
\date{\today}

\author[Y. Bae]{Younghan Bae}
\address{University of Michigan}
\email{younghan@umich.edu}

\author[D. Maulik]{Davesh Maulik}
\address{Massachusetts Institute of Technology}
\email{maulik@mit.edu}

\author[J. Shen]{Junliang Shen}
\address{Yale University}
\email{junliang.shen@yale.edu}

\author[Q. Yin]{Qizheng Yin}
\address{Peking University}
\email{qizheng@math.pku.edu.cn}

\address{Stockholm University}
\email{dan.petersen@math.su.se}

\begin{abstract}
The purpose of this paper is to study the cohomology rings of universal compactified Jacobians. Over the moduli space $\overline{\CM}_{g,n}$ of Deligne--Mumford stable marked curves with~$n\geq 1$, on the one hand we show that the cohomology ring of a universal fine compactified Jacobian is sensitive to the choice of a nondegenerate stability condition which answers a question of Pandharipande; on the other hand, we prove that the cohomology ring admits a degeneration via the perverse filtration which is independent of the (nondegenerate) stability condition. The latter defines the intrinsic cohomology ring of the universal compactified Jacobian which only relies on $g,n$.

Our main tools include the support theorems, the recently developed Fourier theory for dualizable abelian fibrations, and the universal double ramification cycle relations associated with the universal Picard stack. 
\end{abstract}

\maketitle

\setcounter{tocdepth}{1} 

\tableofcontents
\setcounter{section}{-1}

\section{Introduction} \label{sec:0}

\subsection{Overview}
Throughout, we work over the complex numbers $\BC$. 

The Jacobian variety associated with a nonsingular projective curve is a fundamental geometric object. Universally, we can consider the relative Jacobian over the moduli space $\CM_{g,n}$ of nonsingular genus $g$ marked curves, and their compactifications over the Deligne--Mumford moduli space $\overline{\CM}_{g,n}$ of stable marked curves.

When working over $\overline{\CM}_{g,n}$, the universal Jacobian admits compactifications known as universal \emph{fine compactified Jacobians}; the geometry of a fine compactified Jacobian depends on the choice of a stability condition. As we will show in Theorem~\ref{thm0.6}, the cohomology ring of a fine compactified Jacobian also depends on the chosen stability condition. We will demonstrate in Theorem \ref{thm0.7} that, although these cohomology rings may differ, they admit a common degeneration. This degeneration is constructed using the \emph{perverse filtration}, and should be viewed as ``the intrinsic cohomology ring'' of the compactified Jacobian over the moduli space of stable curves, which does not rely on the stability condition.

We note that the perverse filtration has played a key role in the study of degenerations of Jacobians of curves. For example, the perverse filtration associated with the Hitchin system detects the weight filtration of the character variety via the (now proven) $P=W$ conjecture in non-abelian Hodge theory \cite{dCHM1, MS_PW, HMMS, MSY}; the perverse filtration of a compactified Jacobian calculates the Gopakumar--Vafa invariants associated with a (possibly singular) curve \cite{MT}, and is closely related to knot invariants \cite{ORS, MY, MS1}. In this paper, the natural appearance of the perverse filtration is due to its compatibility with the Fourier transform developed in \cite{MSY}.

\subsection{Jacobians}\label{sec_trivial}

Let $\CM_g$ be the moduli space of nonsingular projective irreducible curves of genus $g$. The degree $d$ universal Jacobian $J^d_g$ is smooth and proper over $\mathcal{\CM}_g$:
\[
\pi_d: J^d_g \to \CM_g, \quad \left(C_b, L \in \mathrm{Pic}^d(C_b)\right) \mapsto [C_b].
\]
Although the (birational) geometry of $J^d_g$ is sensitive to $d$ (see \cite[Theorem 1.7]{BFV}), the cohomology ring is not.

\begin{prop}\label{thm1}
    For integers $d,d'$, there is an isomorphism 
    \[
    H^*(J^d_g, \BQ) \simeq H^*(J_g^{d'}, \BQ)
    \]
    of graded $H^*(\CM_g, \BQ)$-algebras.
\end{prop}

Indeed, there is a morphism 
\[
\varphi: J_g^d \to J_g^{d(2g-2)}, \quad (C_b, L) \mapsto (C_b, L^{\otimes (2g - 2)})
\]
which is fiberwise an isogeny up to translation over $\CM_g$. The pullback induces a morphism 
\[
\varphi^*: \pi_{d(2g-2) *} \BQ \to \pi_{d *} \BQ
\]
compatible with the cup-product, and it is an isomorphism which can be verified on the stalks. We thus obtain an isomorphism 
\begin{equation}\label{iso_trivial}
\varphi^*: H^*(J^{d(2g-2)}_g, \BQ) \xrightarrow{\simeq} H^*(J^{d}_g, \BQ) 
\end{equation}
of graded $H^*(\CM_g, \BQ)$-algebras. Finally, we have the natural isomorphism $J^{d(2g-2)}_g \simeq J^0_g$ over~$\CM_g$ induced by the relative canonical bundle, so the left-hand side of (\ref{iso_trivial}) is $d$-independent.

In Appendix A, we generalize Proposition \ref{thm1} to any torsor under an abelian scheme over a nonsingular base by extending the strategy above. However, it is not clear to us how this proof can be generalized to compactified Jacobians with singular fibers.  

Instead, we will generalize another, more elaborate, proof of Proposition \ref{thm1} following the arguments of Beauville \cite{B0, B}.  We sketch the proof as follows.

The idea for the second proof is to use a derived equivalence between certain gerbes over~$J^d_g$ and $J^0_g$ respectively. By passing to cohomology, this yields a Fourier transform connecting $H^*(J^d_g, \BQ)$ with $H^*(J^0_g, \BQ)$, which relates the cup-product for $J^d_g$ to the convolution product for $J^0_g$:
\begin{equation}\label{Fourier=Convolution1}
(H^*(J^d_g, \BQ), \cup) \longleftarrow \mathrm{Fourier~ Transform} \longrightarrow (H^*(J^0_g, \BQ), \ast^d).
\end{equation}
The $d$-independence of the cup-product on $H^*(J^d_g, \BQ)$ is thus reduced to the $d$-independence of the convolution product on $H^*(J^0_g, \BQ)$, which is straightforward.

As we discuss in the next section, by applying some recent developments in the study of perverse filtrations of abelian fibrations, this argument can be generalized to handle the case with singular fibers.

\subsection{Compactified Jacobians}

We consider a partial compactification of $\CM_g$,
\[\CM_g \subset \overline{\CM}_g^{\mathrm{int}} \subset \overline{\CM}_g,
\]
where $\overline{\CM}_g^{\mathrm{int}}$ is the moduli space of \emph{integral} stable curves. For any degree $d$, the universal Jacobian $\pi_d: J^d_g \to \CM_g$ admits a natural extension 
\[
J^d_{g} \subset \overline{J}^{\mathrm{int},d}_g. 
\]
Here $\overline{J}^{\mathrm{int},d}_g$ is the degree $d$ universal \emph{compactified Jacobian} parameterizing pairs $(C_b, F)$ with \mbox{$[C_b] \in \overline{\CM}^{\mathrm{int}}_g$} an integral stable curve and $F$ a (generically) rank $1$ torsion-free sheaf on~$C_b$ satisfying
\[
\chi(F) = d+1-g.
\]
The universal compactified Jacobian admits a natural proper morphism
\begin{equation*}\label{pi_d_int}
\pi_d: \overline{J}^{\mathrm{int},d}_g \to \overline{\CM}_g^{\mathrm{int}} 
 , \quad (C_b, F) \mapsto [C_b]
\end{equation*}
extending the smooth morphism $\pi_d: J^d_g \to \CM_g$. For a singular nodal curve 
\[
[C_b] \in \overline{\CM}_g^{\mathrm{int}} \setminus \CM_g,
\]
the fiber $\pi_d^{-1}([C_b])$ is the compactified Jacobian associated with the curve $C_b$, which is irreducible and contains the Jacobian of $C_b$ (parameterizing line bundles) as a Zariski dense open subset.

The map $\pi_d: \overline{J}^{\mathrm{int},d}_g \to \overline{\CM}_g^{\mathrm{int}}$ endows the cohomology $H^*( \overline{J}^{\mathrm{int},d}_g, \BQ)$ with an extra structure --- the perverse filtration \cite{BBD}:
\begin{equation}\label{p-filtration}
P_0 H^*(\overline{J}^{\mathrm{int},d}_g, \BQ) \subset P_1H^*(\overline{J}^{\mathrm{int},d}_g, \BQ) \subset \cdots \subset P_{2g}H^*(\overline{J}^{\mathrm{int},d}_g, \BQ) = H^*(\overline{J}_g^{\mathrm{int},d}, \BQ).
\end{equation}
The following theorem was proven in \cite{MSY}, which shows that the degeneration of the cohomology $H^*(\overline{J}^{\mathrm{int},d}_g, \BQ)$ via the perverse filtration is a bigraded $H^*(\overline{\CM}_g^{\mathrm{int}}, \BQ)$-algebra.

\begin{thm}[\cite{MSY}]
The perverse filtration \eqref{p-filtration} is multiplicative with respect to the cup-product, \emph{i.e.}, for integers $k,l$ we have
\[
\cup: P_k H^*(\overline{J}^{\mathrm{int},d}_g, \BQ) \otimes P_l H^*(\overline{J}^{\mathrm{int},d}_g, \BQ) \to  P_{k + l} H^*(\overline{J}^{\mathrm{int},d}_g, \BQ).
\]
In particular, the associated graded
\[
\BH^{\mathrm{int},d}_g:= \bigoplus_{k,m} \mathrm{Gr}^P_kH^m(\overline{J}^{\mathrm{int},d}_g, \BQ)
\]
is a bigraded $H^*(\overline{\CM}^{\mathrm{int}}_g, \BQ)$-algebra induced by the cup-product.
\end{thm}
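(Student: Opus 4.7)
The plan is to establish multiplicativity by combining the decomposition theorem with Fourier duality for the universal compactified Jacobian $\pi_d: \overline{J}^{\mathrm{int},d}_g \to \overline{\CM}_g^{\mathrm{int}}$, viewed as a dualizable abelian fibration.

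First I would verify the Fourier-theoretic hypotheses for $\pi_d$. The morphism is proper, flat, and has geometrically irreducible fibers of dimension $g$, because the base parameterizes integral curves and the compactified Jacobian of an integral curve is irreducible. Tensoring by line bundles exhibits an action of the open relative Jacobian $J^0_g \to \overline{\CM}_g^{\mathrm{int}}$, which is a polarized abelian scheme; hence $\overline{J}^{\mathrm{int},d}_g$ is a torsor for a dualizable abelian fibration. The associated support theorem then yields a splitting
\[
R\pi_{d,*}\BQ[\dim \overline{J}^{\mathrm{int},d}_g] \simeq \bigoplus_{i=0}^{2g} \mathcal{P}_i[-i],
\]
in which each perverse summand $\mathcal{P}_i$ has full support on $\overline{\CM}_g^{\mathrm{int}}$, and $P_\bullet H^*$ is precisely the filtration induced by this splitting.

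Second, I would reformulate multiplicativity as the assertion that the relative cup product
\[
\mu: R\pi_{d,*}\BQ \otimes^L R\pi_{d,*}\BQ \longrightarrow R\pi_{d,*}\BQ
\]
sends ${}^{p}\tau^{\leq k} \otimes {}^{p}\tau^{\leq l}$ into ${}^{p}\tau^{\leq k+l}$. Applying the relative Fourier transform $\mathfrak{F}$ identifies $\mu$ with the convolution product on the Fourier-dual fibration and identifies the perverse filtration with the codegree filtration determined by the Poincar\'e sheaf. Since convolution is pushforward along fiberwise addition, and addition is additive with respect to the Beauville decomposition on an abelian variety, the codegree filtration is respected by convolution in precisely the required way. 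Transporting this back via $\mathfrak{F}^{-1}$ yields multiplicativity of $P_\bullet$ with respect to cup product.

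The main obstacle is that above the boundary of $\overline{\CM}_g^{\mathrm{int}}$ the fibers degenerate to singular compactified Jacobians, so the Poincar\'e line bundle on the smooth locus must be extended to a well-behaved Fourier--Mukai kernel on the universal compactified Picard, and self-duality must be verified globally. Establishing this extension, together with the support theorem in the presence of such degenerations, is precisely the content of the Fourier theory for dualizable abelian fibrations developed in \cite{MSY}; once those ingredients are in hand, the reduction to an abelian-variety calculation makes the rest of the argument essentially formal.
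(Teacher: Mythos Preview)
Your overall strategy matches the paper's (and \cite{MSY}'s) approach: use Arinkin's Poincar\'e sheaf to build a Fourier transform, invoke the full support theorem, and transport the cup product to a convolution product on the dual side. Where your sketch diverges from the actual argument is in the mechanism that makes convolution respect the perverse filtration.

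You write that convolution is pushforward along addition and that addition is ``additive with respect to the Beauville decomposition on an abelian variety,'' so the argument reduces to an abelian-variety calculation. But over the boundary of $\overline{\CM}_g^{\mathrm{int}}$ the fibers are not abelian varieties and there is no group law on the compactified Jacobian; the Beauville decomposition is simply unavailable there, and the paper does not reduce to the smooth locus. Instead, the two technical ingredients (reviewed in Section~\ref{sec:2.2}) are: (i) a codimension bound $\mathrm{codim}\,\mathrm{Supp}(\CK^d) \geq g$ for the convolution kernel $\CK^d$ over \emph{all} of the base, which gives the dimension bound $\FC^d \in \Chow_{\leq 2g + \dim B}$ on the Chow-theoretic convolution class; and (ii) the $\tfrac{1}{2}$Fourier vanishing $(\FF^d)^{-1}_j \circ \FF^d_i = 0$ for $i + j < 2g$, proved via Adams operations and a similar support estimate. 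From these one builds semi-orthogonal idempotents $\Fp^d_{\leq k}$ whose images, by the full support property, realize the perverse truncations; multiplicativity then follows by a formal manipulation of these idempotents together with the dimension bound on $\FC^d$. So your final sentence is almost right---once the \cite{MSY} machinery is in hand the rest is formal---but the formal part is not a Beauville-style abelian-variety argument; it is a computation with Chow correspondences that works uniformly over the base, and the $\tfrac{1}{2}$Fourier vanishing is the step you are missing.
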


By definition, any element in $H^m(\overline{\CM}^{\mathrm{int}}_g, \BQ)$ has bigrading $(0,m)$. Clearly, $\BH^{\mathrm{int},d}_g$ is isomorphic to $H^*(\overline{J}^{\mathrm{int},d}_g, \BQ)$ as $\BQ$-vector spaces, but the ring structures may be different. The following result shows the $d$-independence of the algebra $\BH^{\mathrm{int},d}_g$.

\begin{thm}\label{thm0.3}
For integers $d,d'$, there is an isomorphism
\[
\BH^{\mathrm{int},d}_g \simeq  \BH^{\mathrm{int},d'}_g 
\]
of bigraded $H^*(\overline{\CM}^{\mathrm{int}}_g, \BQ)$-algebras.
\end{thm}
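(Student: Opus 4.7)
The plan is to compare $\BH^{\mathrm{int},d}_g$ and $\BH^{\mathrm{int},d'}_g$ at the level of perverse cohomology sheaves on $\overline{\CM}^{\mathrm{int}}_g$, combining the support theorem for universal compactified Jacobians with the Fourier theory of \cite{MSY}.

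First I would apply the support theorem to obtain the decomposition
\[
R\pi_{d,*}\BQ \cong \bigoplus_i {}^p\CH^i(R\pi_{d,*}\BQ)[-i],
\]
with each ${}^p\CH^i(R\pi_{d,*}\BQ)$ a semisimple perverse sheaf of full support on $\overline{\CM}^{\mathrm{int}}_g$; consequently each such sheaf, together with the sheaf-level cup-product morphisms, is determined by its restriction to the open dense locus $\CM_g$. Over $\CM_g$, the map $\pi_d$ is a torsor under the abelian scheme $\pi_0$, so the local systems satisfy $R^i\pi_{d,*}\BQ \cong \wedge^i R^1\pi_{0,*}\BQ$ canonically, with cup product identified with exterior multiplication, via the canonical $J^0_g$-action on cohomology. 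Passing to intermediate extensions yields a canonical isomorphism of the perverse cohomology sheaves, together with their cup-product morphisms, that is uniform across all degrees $d$.

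Next I would use the Fourier transform of \cite{MSY} for the dualizable abelian fibration $\overline{J}^{\mathrm{int},0}_g$ to globalize and trivialize the identification: under Fourier duality the complex $R\pi_{d,*}\BQ$ becomes a Poincaré-type twist of $R\pi_{0,*}\BQ$, and the compatibility of Fourier duality with the perverse filtration established in \cite{MSY} ensures that this twist acts trivially on the associated graded. Combined with the multiplicativity of the perverse filtration recalled above, this gives a canonical $d$-independent algebra structure on
\[
\bigoplus_{i,m} H^{m-i}(\overline{\CM}^{\mathrm{int}}_g, {}^p\CH^i(R\pi_{d,*}\BQ)) \cong \BH^{\mathrm{int},d}_g,
\]
yielding the desired bigraded $H^*(\overline{\CM}^{\mathrm{int}}_g, \BQ)$-algebra isomorphism.

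The main obstacle is the precise control of the cup-product morphism on the associated graded. While the underlying perverse cohomology sheaves identify canonically via the torsor action on $\CM_g$, the sheaf-level cup product could a priori acquire a $d$-dependent correction coming from the universal Chern class of the degree-$d$ torsor, contributing to strictly lower perverse degree. Showing that this correction vanishes in $\mathrm{Gr}^P$ requires the full strength of the Fourier–perverse compatibility of \cite{MSY}, which is the key technical input driving the argument.
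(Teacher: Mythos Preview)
Your proposal identifies the right ingredients (full support, Fourier theory, multiplicativity) but has a genuine gap in the first step, and the later steps do not actually fill it.

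The claim that ``each such sheaf, together with the sheaf-level cup-product morphisms, is determined by its restriction to $\CM_g$'' is the crux, and it is unjustified. Full support implies that each $\CH^d_{(k)}$ is the intermediate extension of its restriction to $\CM_g$, so the \emph{objects} are determined by the open set. But the cup product $\overline{\cup}: \CH^d_{(i)} \otimes \CH^d_{(j)} \to \CH^d_{(i+j)}$ is a morphism whose source is a tensor product of IC complexes, which is neither simple nor even perverse in general. There is no reason for
\[
\Hom_{D^b_c(B)}\big(\CH^d_{(i)} \otimes \CH^d_{(j)},\, \CH^d_{(i+j)}\big) \longrightarrow \Hom_{D^b_c(U)}\big((\CH^d_{(i)} \otimes \CH^d_{(j)})|_U,\, \CH^d_{(i+j)}|_U\big)
\]
to be injective; the obstruction lives in $\Hom\big(i^*(\CH^d_{(i)} \otimes \CH^d_{(j)}), i^!\CH^d_{(i+j)}\big)$ over the boundary, and you give no argument that this vanishes. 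Indeed, Theorem~\ref{thm0.6} of the paper shows that the \emph{non-graded} cup product genuinely depends on the stability condition even though the underlying perverse sheaves are all IC extensions of the same local systems --- so ``determined by restriction'' fails there, and you must explain why passing to $\mathrm{Gr}^P$ changes the situation.

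Your second and third paragraphs correctly sense that Fourier theory is what resolves this, but you invoke it only as a black box. The paper's actual mechanism is concrete and different from your torsor-then-extend strategy: it shows that under the Fourier isomorphism $\CH^d \simeq \widetilde{\CH}^\vee$ built from the Arinkin--Poincar\'e sheaf, the induced cup product $\overline{\cup}$ is transported to the \emph{reduced convolution product} $\overline{*}_{\mathrm{red}}$ on the $\Jbar^\vee$-side (this is the commutative diagram argument in Theorem~\ref{thm:mainint}(iii), driven by the $\frac{1}{2}$Fourier vanishing). The $d$-independence then comes from a geometric fact you do not mention: the reduced convolution class $\FC^d_{\mathrm{red}}$ is the fundamental class of the codimension-$g$ support of the convolution kernel, and that support is always the closure of the graph of the group law, hence independent of $d$ (Proposition~\ref{prop:trunC}). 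This replaces your unjustified ``intermediate extension preserves cup product'' with an explicit cycle-theoretic identity.
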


We in fact prove a stronger statement concerning a family of integral locally planar curves; see Theorem \ref{thm:mainint} and Corollary \ref{cor:mainint}. This recovers Theorem \ref{thm0.3} immediately since nodal singularities are planar. 

We can also consider the universal compactified Jacobian over the locus of integral curves in~$\overline{\CM}_{g,n}$; there are obvious isomorphisms between the compactified Jacobians of various degrees as long as $n\geq 1$.

\begin{rmk}
It is natural to ask if there is an isomorphism  
\[
H^*(\overline{J}^{\mathrm{int},d}_g, \BQ) \simeq  H^*(\overline{J}^{\mathrm{int},d'}_g, \BQ) 
\]
of graded $H^*(\overline{\CM}^{\mathrm{int}}_g, \BQ)$-algebras without passing to the associated graded. In view of Theorem~\ref{thm0.6} below, we expect that the answer is negative.
\end{rmk}

\subsection{Fine compactified Jacobians}

Next, we consider the moduli space $\overline{\CM}_{g,n}$ of stable curves with $n$ markings. In order to further extend the compactified Jacobians to the locus of reducible curves, \emph{stability conditions} are needed. In this paper we work with the stability conditions introduced by Kass--Pagani \cite{KP}, and independently by Melo \cite{Melo}.

Roughly, a stability condition in the sense of \cite{KP} is an assignment of a rational number to every irreducible component of every stable marked curve
\[
(C_b, x_1,\cdots, x_n) \in \overline{\CM}_{g,n},
\]
satisfying certain compatibility conditions.
When we fix the degree $d$, the space of stability conditions has a wall-and-chamber structure. The universal \emph{fine compactified Jacobian} for a given stability condition is the moduli space parameterizing rank $1$ torsion-free sheaves on stable marked curves which satisfy the stability inequality. We denote it by~$\overline{J}_{g,n}^{d,\phi}$ to indicate its dependence on the genus $g$, the number of markings $n$, the degree of the torsion-free sheaves~$d$, and the stability condition $\phi$. For a \emph{nondegenerate} stability condition $\phi$, \emph{i.e.}, there are no strictly semistable sheaves, the universal fine compactified Jacobian $\overline{J}_{g,n}^{d,\phi}$ is a nonsingular proper Deligne--Mumford stack which contains the universal compactified Jacobian of integral curves as a Zariski dense open subset.

The following shows that the cohomology ring of $\overline{J}_{g,n}^{d,\phi}$ depends on the stability condition $\phi$; this answers a question of Pandharipande \cite[Question A]{question}.

\begin{thm}\label{thm0.6}
For $g \geq 4$ and any integers $d, d'$, there exist nondegenerate stability conditions~$\phi, \phi'$ of degrees $d,d'$ respectively such that
\[
H^*(\overline{J}_{g,1}^{d,\phi}, \BQ) \not\simeq H^*(\overline{J}_{g,1}^{d',\phi'}, \BQ)
\]
as graded $H^*(\overline{\CM}_{g,1}, \BQ)$-algebras.
\end{thm}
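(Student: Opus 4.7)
The plan is to isolate a numerical invariant of the graded $H^*(\overline{\CM}_{g,1},\BQ)$-algebra $H^*(\overline{J}_{g,1}^{d,\phi},\BQ)$ whose value depends on the chamber of the stability condition $\phi$. Since Theorem~\ref{thm0.7} already shows that the associated graded of the perverse filtration is $\phi$-independent, the Betti numbers agree between the two sides, and any obstruction to an algebra isomorphism must live in cup-product interactions between classes of different perverse degrees rather than in the underlying vector space.

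The invariant I would use is built from the universal theta class. Let $\mathcal{F}$ denote the universal rank-$1$ torsion-free sheaf on the universal curve $\overline{\mathcal{C}}\to\overline{J}_{g,1}^{d,\phi}$, and let
\[
\Theta_{d,\phi} := -c_1\bigl(\det R\pi_{\overline{\mathcal{C}},*}\mathcal{F}\bigr)\in H^2(\overline{J}_{g,1}^{d,\phi},\BQ)
\]
be the canonical theta divisor class. By the Poincar\'e formula on the (abelian) generic fibers, $\pi_{d,*}(\Theta_{d,\phi}^{g})=g!$ is $\phi$-independent, but for $k\geq 1$ the higher pushforwards $\pi_{d,*}(\Theta_{d,\phi}^{g+k})\in H^{2k}(\overline{\CM}_{g,1},\BQ)$ decompose as a generic part pulled back from $\CM_{g,1}$ plus a boundary part supported on divisors parametrizing reducible curves, and the boundary part is controlled by the Kass--Pagani polytope $P_{d,\phi}$ of admissible bidegrees \cite{KP, Melo} over the reducible strata.

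For $g\geq 4$ there are boundary strata $\delta_\Gamma\subset\overline{\CM}_{g,1}$ of splittings $\Gamma=(g_1,g_2)$ with $g_1,g_2\geq 2$ whose associated polytopes are rich enough that one can find nondegenerate $\phi$ of degree $d$ and $\phi'$ of degree $d'$ with $P_{d,\phi}\neq P_{d',\phi'}$ in a cohomologically detectable sense. Pulling back the universal DR cycle formula on the universal Picard stack $\pic_{g,1}$ (the Pixton formalism, as employed in \cite{MSY}) to $\overline{J}_{g,1}^{d,\phi}$ yields explicit tautological expressions for the boundary part of $\pi_{d,*}(\Theta_{d,\phi}^{g+1})$; for suitable $\phi,\phi'$ these expressions differ. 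Since $\Theta_{d,\phi}$ is canonically attached to the moduli functor, any $H^*(\overline{\CM}_{g,1},\BQ)$-algebra isomorphism must respect it up to explicit corrections by pullback classes, hence must force the pushforwards to agree — a contradiction.

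The main obstacle is the rigorous canonical characterization of $\Theta_{d,\phi}$: showing that any $H^*(\overline{\CM}_{g,1},\BQ)$-algebra isomorphism must send $\Theta_{d,\phi}$ to $\Theta_{d',\phi'}$ modulo an affine shift by pullbacks from $\overline{\CM}_{g,1}$ (and a scalar on the top class). I expect this to follow from the Fourier theory for dualizable abelian fibrations of \cite{MSY}: the exponential $e^{\Theta_{d,\phi}}$ realizes the Fourier--Mukai transform on the generic fiber, and this structure is already encoded in the $H^*(\overline{\CM}_{g,1},\BQ)$-algebra via the support theorem, pinning down $\Theta_{d,\phi}$ up to the expected corrections.
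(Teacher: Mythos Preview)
Your overall strategy --- distinguish the algebras via a pushforward of a power of a canonical divisor class --- is the right instinct, and the paper does use the universal DR cycle relations in exactly the way you anticipate. But the argument as you have outlined it does not close, for two concrete reasons.

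First, the rigidity you hope for does not hold. By Lemma~\ref{lem:divisor} the quotient $H^2(\overline{J}_{g,1}^{0,\phi},\BQ)/\pi^*H^2(\overline{\CM}_{g,1},\BQ)$ is \emph{two}-dimensional, spanned by $\Theta$ and the class $\kappa_{0,1}:=p_*(c_1(\CL)\cup c_1(\omega_{p,\log}))$. An abstract $H^*(\overline{\CM}_{g,1},\BQ)$-algebra isomorphism $f$ is under no obligation to respect $\Theta$; one can only write
\[
f(\Theta)=a\Theta+b\kappa_{0,1}+\beta,\qquad f(\kappa_{0,1})=s\Theta+t\kappa_{0,1}+\beta',
\]
with $a,b,s,t\in\BQ$ and $\beta,\beta'$ pulled back. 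The Fourier theory of \cite{MSY} controls the \emph{associated graded} of the perverse filtration (this is precisely Theorem~\ref{thm0.7}), but it does not pin down $\Theta$ inside the actual ring, so it will not give you the rigidity you need.

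Second, your proposed invariant $\pi_*(\Theta^{g+1})\in H^2(\overline{\CM}_{g,1},\BQ)$ carries no obstruction. Expanding $\pi_{2*}(f(\Theta)^{g+1})$ and using the strong-perversity bound for $\kappa_{0,1}$ (Proposition~\ref{pro:kappa}) together with $\pi_*(\Theta^g\cup\kappa_{0,1})=0$, every $\phi$-dependent term is absorbed by the free pullback class $\beta$: the equation $c\cdot\pi_{1*}(\Theta^{g+1})=\pi_{2*}(f(\Theta)^{g+1})$ can always be solved for $\beta$, so it constrains nothing.

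The paper instead works with the degree-four pushforward $\pi_*(\Theta^{g-1}\cup\kappa_{0,1}^3)\in H^4(\overline{\CM}_{g,1},\BQ)$. Several preliminary pushforward relations force $s=0$, $a=t^2$, and determine $\beta'$ in terms of $b,t$; then an explicit universal DR computation (Proposition~\ref{pro:kappa3}) expresses $\pi_*(\Theta^{g-1}\cup\kappa_{0,1}^3)$ as a linear combination of $g-1$ linearly independent boundary classes $[\Gamma_{g_1}]$ with coefficients that genuinely depend on $\phi$. Matching coefficients yields $g-1$ equations in the two remaining unknowns $(b,t)$, which for $g\geq 4$ and suitable stability conditions $\phi(0),\phi(2)$ are inconsistent. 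The key point you are missing is that one must go to $H^4$, where there are enough independent boundary classes to overdetermine the system, and one must mix $\Theta$ with $\kappa_{0,1}$ rather than rely on $\Theta$ alone.
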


On the other hand, we may consider the perverse filtration $P_\bullet H^*(\overline{J}_{g,n}^{d,\phi}, \BQ)$ associated with the natural proper morphism
\[
\pi_d: \overline{J}_{g,n}^{d,\phi} \to \overline{\CM}_{g,n}
\]
and the associated graded
\[
\BH^{d,\phi}_{g,n}:= \bigoplus_{k,m} \mathrm{Gr}_k^PH^m(\overline{J}^{d,\phi}_{g,n}, \BQ).
\]

\begin{thm}\label{thm0.7}
Assume that $\phi, \phi'$ are nondegenerate stability conditions of degrees $d,d'$ respectively, and that $n \geq 1$. We have the following.
\begin{enumerate}
\item[(i)] The associated graded $\BH^{d, \phi}_{g,n}$ is a bigraded algebra over $H^*(\overline{\CM}_{g,n}, \BQ)$ with elements in $H^m(\overline{\CM}_{g,n}, \BQ)$ of bigrading $(0,m)$.
       
\item[(ii)] There is an isomorphism 
\[
\BH^{d,\phi}_{g,n} \simeq \BH^{d',\phi'}_{g,n}
\]
of bigraded $H^*(\overline{\CM}_{g,n}, \BQ)$-algebras.
\end{enumerate}
\end{thm}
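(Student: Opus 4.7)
The plan is to extend the strategy developed in \cite{MSY} for the integral locus to the full fine compactified Jacobian, combining the Fourier--Mukai approach with Abel--Jacobi translations coming from the markings, and using universal double ramification cycle relations on the Picard stack to bypass geometric dependence on $\phi$.

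For part (i), I would first verify that $\pi_d: \overline{J}^{d,\phi}_{g,n} \to \overline{\CM}_{g,n}$ falls into the framework of a proper dualizable abelian fibration in the sense of \cite{MSY}. Since $\phi$ is nondegenerate, the total space is a nonsingular proper Deligne--Mumford stack of relative dimension $g$, and the relative degree $0$ Picard stack $\ppic^0_{g,n}$ acts on it compatibly with $\pi_d$. The support theorem of \cite{MSY} then forces every simple perverse summand of $R\pi_{d,*}\BQ[\dim]$ to have full support on $\overline{\CM}_{g,n}$. Multiplicativity of the perverse filtration follows as in the proof quoted from \cite{MSY} for the integral case: the cup-product is intertwined by the Fourier transform with convolution, which is manifestly compatible with the grading by simple perverse summands.

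For part (ii), I proceed in two steps. The first step handles the degree shift using the hypothesis $n \geq 1$. Choose integers $a_1, \ldots, a_n$ with $\sum_i a_i = d'-d$, and consider tensoring with $\CO\bigl(\sum_i a_i x_i\bigr)$ along the universal curve. This defines an isomorphism $\ppic^d_{g,n} \simeq \ppic^{d'}_{g,n}$ of Picard stacks, translating $\phi$ to a nondegenerate stability condition $\phi''$ of degree $d'$, and hence yields an isomorphism $\overline{J}^{d,\phi}_{g,n} \simeq \overline{J}^{d',\phi''}_{g,n}$ over $\overline{\CM}_{g,n}$. In particular $\BH^{d,\phi}_{g,n} \simeq \BH^{d',\phi''}_{g,n}$ as bigraded $H^*(\overline{\CM}_{g,n}, \BQ)$-algebras.

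The second and principal step is $\phi$-independence within a fixed degree, namely $\BH^{d,\phi}_{g,n} \simeq \BH^{d,\phi'}_{g,n}$ for two nondegenerate stability conditions $\phi, \phi'$ of the same degree. The idea is to give a universal presentation of $\BH^{d,\phi}_{g,n}$ as a bigraded $H^*(\overline{\CM}_{g,n}, \BQ)$-algebra generated by tautological classes---Chern classes of the universal rank $1$ torsion-free sheaf on $\overline{J}^{d,\phi}_{g,n} \times_{\overline{\CM}_{g,n}} \mathcal{C}_{g,n}$, together with $\psi$ and $\kappa$ classes---subject to relations pulled back from the universal double ramification cycle on $\ppic^d_{g,n}$. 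Since both the generators and the DR relations live intrinsically on the universal Picard stack, before any stability condition is imposed, they are manifestly $\phi$-independent. The main obstacle is \emph{completeness} of this presentation on the associated graded: by Theorem~\ref{thm0.6} it must fail for the ordinary cohomology ring, so only the extra flexibility gained by passing to $\mathrm{Gr}^P$ can make it work. I expect this to be the hard technical heart of the argument; the plan is to compare the Fourier transforms on $\overline{J}^{d,\phi}_{g,n}$ and $\overline{J}^{d,\phi'}_{g,n}$ via their restrictions to the integral locus (where Theorem~\ref{thm0.3} applies), and then extend the resulting isomorphism of graded algebras globally using that every simple perverse summand of $R\pi_{d,*}\BQ$ has full support.
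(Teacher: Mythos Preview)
Your outline for part (i) and the degree-reduction step of part (ii) is essentially what the paper does. The main gap is in your ``principal step'' --- the $\phi$-independence within a fixed degree.

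Your proposed route via a tautological presentation of $\BH^{d,\phi}_{g,n}$ with relations pulled back from the universal double ramification cycle is not the paper's approach, and it has a serious hole you already flag: completeness of such a presentation on the associated graded is not known and would be a substantial result in its own right (cf.\ the open Question~0.11 in the introduction). In fact the paper uses the universal DR relations for the \emph{opposite} purpose in Section~\ref{sec:4}: to prove Theorem~\ref{thm0.6}, i.e.\ that the actual cohomology ring \emph{does} depend on $\phi$. So DR relations are not the mechanism behind $\phi$-independence of $\BH^{d,\phi}_{g,n}$. Your fallback plan --- restrict to the integral locus and extend via full support --- is too vague: full support lets you transport isomorphisms of perverse sheaves, but it does not by itself transport the ring structure.

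What the paper actually does is both cleaner and more geometric. Fix a single nondegenerate degree-$0$ stability condition $\phi'$ and use $\overline{J}^{\phi'}_C$ as the common ``dual'' for \emph{every} $\overline{J}^{\phi}_C$. The Arinkin/Melo--Rapagnetta--Viviani Poincar\'e sheaf on $\overline{J}^{\phi'}_C\times_B\overline{J}^{\phi}_C$ gives a Fourier transform intertwining $\overline{\cup}$ on $\CH^{\phi}$ with a reduced convolution product $\overline{*}_{\mathrm{red}}$ on $\CH^{\phi'}$, exactly as in Section~\ref{sec:2}. The key new input (Proposition~\ref{prop:supp}) is that the codimension-$g$ support of the convolution kernel on $\overline{J}^{\phi'}_C\times_B\overline{J}^{\phi'}_C\times_B\overline{J}^{\phi'}_C$ is, for any $\phi$, the Zariski closure $\overline{\Gamma}$ of the graph of the group law of the abelian-scheme locus --- hence the reduced convolution class $\FC_{\mathrm{red}}=[\overline{\Gamma}]$ is independent of $\phi$. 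Together with the $\frac{1}{2}$Fourier vanishing (which follows from $\delta$-regularity of the universal curve) and the full-support theorem of \cite{MSV}, this feeds into the general Theorem~\ref{thm:DAF} and gives $\CH^{\phi}\simeq\CH^{\phi'}$ as graded ring objects, whence Theorem~\ref{thm0.7} after taking cohomology. The point you are missing is precisely this identification of $\FC_{\mathrm{red}}$ with $[\overline{\Gamma}]$; once you have it, no tautological calculus is needed.
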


Theorems \ref{thm0.6} and \ref{thm0.7} suggest that the induced cup-product on the associated graded with respect to the perverse filtration is ``intrinsic'' to the moduli space of stable marked curves.

\begin{rmk}
When $n=0$, nondegenerate stability conditions of degree $d$ exist if and only if 
\begin{equation}\label{existence}
\mathrm{gcd}(d + 1 - g, 2g - 2) = 1;
\end{equation}
see \cite[Remark 5.12]{KP}. Thus, the best we can expect is that both statements of Theorem \ref{thm0.7} hold for those values of $d$ satisfying the numerical condition \eqref{existence}. Our proof does not cover this case due to the absence of a nondegenerate stability condition of degree $0$.
\end{rmk}

We note that the proof of Theorem \ref{thm0.7} also yields a parallel result for fine compactified Jacobians of a reduced locally planar curve which may be of independent interest. To state the result, now we assume that $C_0$ is a reduced locally planar curve of arithmetic genus $g$. For any choice of a nondegenerate stability condition $\phi$, \emph{i.e.}, there are no strictly semistable sheaves, Migliorini--Shende--Viviani \cite{MSV} introduced a perverse filtration on the cohomology of the corresponding fine compactified Jacobian
\begin{equation}\label{perv_fil_C}
P_0H^*(\overline{J}^\phi_{C_0}, \BQ) \subset P_1 H^*(\overline{J}^\phi_{C_0}, \BQ) \subset \cdots \subset P_{2g}H^*(\overline{J}^\phi_{C_0}, \BQ) = H^*(\overline{J}^\phi_{C_0}, \BQ).
\end{equation}
Similarly, we consider the associated graded
\[
\BH^\phi_{C_0}:= \bigoplus_{k,m} \mathrm{Gr}_k^PH^m(\overline{J}_{C_0}^\phi, \BQ).
\]

\begin{thm}\label{thm0.8}
Assume that $\phi, \phi'$ are nondegenerate stability conditions for the reduced locally planar curve $C_0$. We have the following.
\begin{enumerate}
\item[(i)] The associated graded $\BH^{\phi}_{C_0}$ is a bigraded $\BQ$-algebra.
       
\item[(ii)] There is an isomorphism 
\[
\BH^{\phi}_{C_0} \simeq \BH^{\phi'}_{C_0}
\]
of bigraded $\BQ$-algebras.
\end{enumerate}
\end{thm}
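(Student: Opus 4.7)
The plan is to adapt the proof of Theorem \ref{thm0.7} to the setting of a versal deformation of the single curve $C_0$. Let $\pi: \CC \to \CB$ be a versal deformation of $C_0$ in the space of reduced locally planar curves, chosen small enough so that the stability conditions $\phi$ and $\phi'$ extend to the family. For each such stability, we obtain a relative fine compactified Jacobian $\pi^\phi: \overline{J}^\phi_\CB \to \CB$, a proper morphism that is smooth over the open locus of nonsingular curves and has integral fibers over the locus of integral curves. Following \cite{MSV}, the perverse filtration \eqref{perv_fil_C} on $H^*(\overline{J}^\phi_{C_0}, \BQ)$ is obtained by restricting the perverse filtration arising from the decomposition theorem applied to $R\pi^\phi_* \BQ$ along the inclusion of the central point $0 \in \CB$.

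For part (i), I would establish multiplicativity of the perverse filtration with respect to cup-product by applying the Fourier theory for dualizable abelian fibrations of \cite{MSY}. The relative compactified Jacobian $\pi^\phi$ is a torsor under the degree-zero relative Picard of the family, and fiberwise Poincar\'e duality endows it with the structure of a dualizable abelian fibration. The multiplicativity of the perverse filtration then follows from the compatibility of the Fourier transform with cup-product, exactly as in the proof of the universal case in \cite{MSY} and Theorem \ref{thm0.7}(i).

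For part (ii), the main tool is the support theorem of Migliorini--Shende--Viviani for compactified Jacobians over families of reduced locally planar curves: the semisimplification of $R\pi^\phi_* \BQ$ decomposes as a direct sum of intermediate extensions of local systems pulled back from the open locus $\CB^\circ \subset \CB$ parameterizing integral curves, and this decomposition is intrinsic to the family $\CC/\CB$. Over $\CB^\circ$, the fibrations $\pi^\phi$ and $\pi^{\phi'}$ are canonically isomorphic after twisting by a suitable line bundle on $\CC$, and the twist acts as the identity on direct image sheaves. Combining these inputs produces an isomorphism $R\pi^\phi_* \BQ \simeq R\pi^{\phi'}_* \BQ$ which identifies the perverse filtrations; restricting to $0 \in \CB$ gives a bigraded vector space isomorphism $\BH^\phi_{C_0} \simeq \BH^{\phi'}_{C_0}$.

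The hard part is upgrading this vector space isomorphism to a ring isomorphism. The cup-product on $\BH^\phi_{C_0}$ is induced from the cup-product on $R\pi^\phi_* \BQ$ via pullback along the small diagonal of $\overline{J}^\phi_\CB$, and one must show that the support-theoretic identification intertwines these two cup-product structures. Following the strategy of \cite{MSY}, the key is to pass to the Fourier dual: the Fourier transform exchanges cup-product on $R\pi^\phi_* \BQ$ with a shifted convolution under the relative Picard action, and the Fourier dual of $R\pi^\phi_* \BQ$ is supported along the identity section of the Picard, determined by data that only depends on the family $\CC/\CB$ and not on the choice of $\phi$. Specializing this Fourier-side identification to $0 \in \CB$ yields the desired algebra isomorphism in part (ii).
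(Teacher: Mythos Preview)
Your overall strategy matches the paper's: embed $C_0$ in a versal deformation, apply the Fourier theory of \cite{MSY} to get multiplicativity, and argue that the Fourier-dual convolution product is independent of $\phi$. However, the final step contains a genuine gap. You write that ``the Fourier dual of $R\pi^\phi_*\BQ$ is supported along the identity section of the Picard.'' This is incorrect: the convolution kernel $\CK$ lives on $M^\vee \times_B M^\vee \times_B M^\vee$ (where the paper takes $M^\vee$ to be a fixed fine compactified Jacobian $\overline{J}^{\phi'}_C$, not the open relative Picard) and is supported in codimension $g$, on the closure $\overline{\Gamma}$ of the graph of the group law over the smooth locus. More importantly, the full convolution class $\FC$ obtained from $\CK$ \emph{does} depend on $\phi$. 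What is independent of $\phi$ is only its codimension-$g$ component $\FC_{\mathrm{red}} = [\overline{\Gamma}]$ (Proposition~\ref{prop:supp}), and the heart of the argument (Theorem~\ref{thm:DAF}) is that $\FC_{\mathrm{red}}$ alone determines the induced cup-product $\overline{\cup}$ on the associated graded: the higher-codimension pieces of $\FC$ contribute terms that land in strictly lower perversity and hence vanish on $\mathrm{Gr}^P$. Without isolating this reduced-class mechanism, your phrase ``determined by data that only depends on the family $\CC/\CB$'' is an assertion, not an argument.

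A secondary point: your detour through the support theorem and the identification of $\pi^\phi$ with $\pi^{\phi'}$ over the integral locus, in order to produce a vector-space isomorphism $R\pi^\phi_*\BQ \simeq R\pi^{\phi'}_*\BQ$, is redundant. The paper obtains the graded isomorphism $\CH^\phi \simeq \CH^{\phi'}$ directly from the Fourier transform (both sides are identified with $\widetilde{\CH}^{\phi'}$ equipped with $\overline{*}_{\mathrm{red}}$), and the full-support property enters only as an axiom of the dualizable abelian fibration framework (Definition~\ref{def:DAB}(b)), verified for versal families via \cite{MSV}.
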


Theorems \ref{thm0.7} and \ref{thm0.8} are deduced from a more general sheaf-theoretic statement for relative fine compactified Jacobians associated with locally planar reduced curves; see Theorem \ref{thm_main_reduced}.


\subsection{Ideas of the proofs}
The key idea in the proofs of Theorems \ref{thm0.3}, \ref{thm0.7}, and \ref{thm0.8} is the observation that the Fourier transform controls the cup-product, generalizing the proof sketched at the end of Section \ref{sec_trivial}.

To carry out this idea when there are singular curves, we apply the Fourier transform induced by the sheaves constructed by Arinkin \cite{A2} and Melo--Rapagnetta--Viviani \cite{MRV2} for the Fourier--Mukai duality. By the recent work of Maulik--Shen--Yin \cite{MSY}, the Fourier transform interacts naturally with the (multiplicative) perverse filtration. Furthermore, we show in this paper that the Fourier transform and the induced cup-product on the associated graded (with respect to the perverse filtration) provides an analogue of (\ref{Fourier=Convolution1}) when there are singular curves. For example, in the setting of Theorem \ref{thm0.7} we have
\begin{equation}\label{Fourier=Convolution2}
\left(\mathrm{Gr}^P_\bullet H^*(\Jbar^{d, \phi}_{g,n}, \BQ), \overline{\cup}  \right) \longleftarrow \mathrm{Fourier~ Transform} \longrightarrow
\left(\mathrm{Gr}^P_\bullet H^*(\Jbar^{0, \phi_0}_{g,n}, \BQ), \overline{\ast}_{\mathrm{red}}^\phi \right)
\end{equation}
where $\phi_0$ is a fixed nondegenerate stability condition and $\overline{\ast}_{\mathrm{red}}^\phi$ is the reduced convolution product --- the natural convolution product on the associated graded. We complete the proof of Theorem \ref{thm0.7} by showing that the reduced convolution product on the right-hand side of~(\ref{Fourier=Convolution2}), which \emph{a priori} relies on $\phi$, is in fact independent of $d, \phi$.

From the perspective of the Fourier transform, the induced cup-product on the associated graded with respect to the perverse filtration is a more natural ring structure on the cohomology group of the universal compactified Jacobian. We view this as the \emph{intrinsic cohomology ring} of the universal compactified Jacobian; this ring is defined via a choice of compactification, but is eventually independent of the compactification. It is interesting to explore if the intrinsic cohomology ring can be realized \emph{geometrically}, \emph{i.e.}, as (part of) the cohomology ring of some space related to the universal Jacobian.

On the other hand, we prove in Theorem \ref{thm0.6} that the actual ring structure of $H^*(\Jbar^{d, \phi}_{g,n}, \BQ)$ is very sensitive to the choice of nondegenerate stability condition $\phi$; this can already be seen by considering monomials of divisors on fine compactified Jacobians.

\subsection{Relations to other work and further discussions} 

We conclude the introduction by discussing some relations to other work.

\subsubsection{Compactified Jacobians} There are other versions of (relative) compactified Jacobians, which generalize the ones given by the polarization stability condition considered in this paper; we refer to \cite{PaTo, FPV} and the references therein for more details. We expect that our theory via the Fourier--Mukai duality and the support theorem can also be applied to these versions.

\subsubsection{Enhanced $\chi$-independence}
The relative Jacobian (of a given degree) of the moduli space of nonsingular degree $d$ planar curves admits a natural compactification by Le Potier \cite{LP}. The Le Potier moduli space $M_{d,\chi}$ parameterizes semistable $1$-dimensional sheaves on $\BP^2$ with Fitting support of degree $d$ and Euler characteristic $\chi$. We assume $(d,\chi)=1$ so that the moduli spaces are nonsingular; a story parallel to the case of stable curves is expected to hold for this compactification:
\begin{enumerate}
    \item[(i)] The cohomology group $H^*(M_{d,\chi}, \BQ)$ is known to be $\chi$-independent \cite{Bou, MS}.  
    \item[(ii)] The ring structure of $H^*(M_{d,\chi}, \BQ)$ is highly sensitive to $\chi$ \cite{LMP}.
    \item[(iii)] It was conjectured by Kononov--Moreira--Lim--Pi that the associated graded 
    \[
    \mathrm{Gr}^P_\bullet H^*(M_{d,\chi}, \BQ)
    \]
with respect to the perverse filtration (defined by the support map $M_{d,\chi} \to |\CO_{\BP^2}(d)|$) is naturally a $\BQ$-algebra induced by the cup-product, which is further $\chi$-independent; this was confirmed for $d=5$ \cite{KLMP}. 
    \end{enumerate}
Since the Le Potier compactification of the relative Jacobian of planar curves contains fibers which are not given by fine compactified Jacobians of reduced curves, our theory (which relies on the Fourier--Mukai duality) cannot be applied to prove the conjecture (iii). On the other hand, the recently discovered $P=C$ phenomenon for $M_{d,\chi}$ \cite{KPS, KLMP} suggests that a version of the Arinkin sheaf may be extended over the non-reduced planar curves.

\subsubsection{Lagrangian fibrations}
In general, the induced cup-product — assuming it is well defined — on the associated graded with respect to the perverse filtration does not yield the same ring as the ordinary cohomology ring (\emph{e.g.}~one can compare Theorem \ref{thm0.6} with Theorem \ref{thm0.7}). On the other hand, it was conjectured (\emph{c.f.}~\cite[Section 3]{BMSY}) that the perverse filtration associated with a \emph{Lagrangian fibration} $f:M\to B$ admits a multiplicative splitting (even \emph{motivically}); in particular, one expects an isomorphism
\begin{equation}\label{equiv}
\left(\mathrm{Gr}^P_\bullet H^*(M, \BQ), \overline{\cup} \right) \simeq \left(H^*(M, \BQ), \cup \right).
\end{equation}
For Lagrangian fibrations associated with compact hyper-K\"ahler varieties, this was proven in~\cite{SY} using the Hodge decomposition and the Looijenga--Lunts--Verbitsky Lie algebra; for the Hitchin system associated with $\mathrm{GL}_n$, this is a consequence of the $P=W$ conjecture~\cite{dCHM1, MS_PW, HMMS, MSY}.

The isomorphism (\ref{equiv}) is further expected to hold sheaf-theoretically over $B$, and the discussion above suggests that its obstruction is global. Hence, we expect that the ``local'' perverse filtration (\ref{perv_fil_C}) admits a multiplicative splitting. This, combined with Theorem \ref{thm0.8}, leads us to conjecture the following.

\begin{conj}
    Under the assumption of Theorem \ref{thm0.8}, for the nondegenerate stability conditions~$\phi,\phi'$, there is an isomorphism
    \[
    H^*(\Jbar^\phi_{C_0}, \BQ) \simeq H^*(\Jbar^{\phi'}_{C_0}, \BQ)
    \]
    of graded $\BQ$-algebras.
\end{conj}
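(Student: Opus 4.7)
The strategy is to reduce the conjecture to the existence of a \emph{multiplicative splitting} of the perverse filtration (\ref{perv_fil_C}) on $H^*(\Jbar^\phi_{C_0}, \BQ)$ for each nondegenerate stability condition $\phi$. Precisely, suppose that for every such $\phi$ we can produce a graded $\BQ$-algebra isomorphism
\[
\sigma_\phi: H^*(\Jbar^\phi_{C_0}, \BQ) \xrightarrow{\ \sim\ } \BH^\phi_{C_0},
\]
where the target is viewed as a singly-graded algebra by forgetting its perverse grading. Then composing $\sigma_{\phi'}^{-1}$, the bigraded algebra isomorphism $\BH^\phi_{C_0} \simeq \BH^{\phi'}_{C_0}$ from Theorem \ref{thm0.8}(ii), and $\sigma_\phi$ produces the sought isomorphism of graded $\BQ$-algebras. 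Thus the entire content of the conjecture is reduced to constructing the splitting $\sigma_\phi$ for a single compactified Jacobian.

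To build $\sigma_\phi$, I would work with a versal deformation $\mathcal{C}\to B$ of $C_0$; by local planarity, $B$ can be taken smooth and the total space $\mathcal{C}$ is smooth, so that the relative fine compactified Jacobian $\pi: \Jbar^\phi_{\mathcal{C}/B}\to B$ is a dualizable abelian fibration in the sense of \cite{MSY}. The perverse filtration on $H^*(\Jbar^\phi_{C_0}, \BQ)$ is by construction the restriction of the global perverse filtration of $\pi$ to the fiber over $[C_0] \in B$. Over the smooth locus $B^\circ \subset B$ the fibers of $\pi$ are smooth Jacobians and the decomposition theorem yields a canonical multiplicative splitting of $R\pi_*\BQ|_{B^\circ}$ coming from the abelian-variety structure (via Deligne--K\"unneth together with the Lefschetz $\mathfrak{sl}_2$). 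The next step is to extend this splitting across the discriminant using the Arinkin sheaf \cite{A2, MRV2}, combining the relative Lefschetz operator on $\Jbar^\phi_{\mathcal{C}/B}$ with its Fourier-dual operator on $\Jbar^{0,\phi_0}_{\mathcal{C}/B}$ to generate an $\mathfrak{sl}_2\times \mathfrak{sl}_2$-action on $R\pi_*\BQ$ whose weight decomposition supplies a canonical splitting, in the spirit of \cite{SY}.

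The main obstacle is exactly this extension step --- producing a multiplicative splitting of the perverse filtration over the entire discriminant locus of $B$, and thereby over the fiber $[C_0]$. As the paper observes just before the conjecture, the expected obstruction to such a splitting is of a global nature, so for the local family $\mathcal{C}/B$ one hopes that no obstruction survives; turning this expectation into a proof requires either an intrinsic Deligne-type splitting constructed from the Arinkin Fourier kernel, or a new input replacing the hyper-K\"ahler (resp.\ $P=W$) inputs used in \cite{SY} (resp.\ \cite{dCHM1, MS_PW, HMMS, MSY}). The cleanest route I envisage is to strengthen the Fourier--convolution identification \eqref{Fourier=Convolution2} so that it intertwines not only the cup and convolution structures but also a natural $\mathfrak{sl}_2$-triple acting semisimply on $H^*(\Jbar^\phi_{C_0}, \BQ)$, whose weight spaces would then furnish the splitting $\sigma_\phi$.
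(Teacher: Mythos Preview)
The statement you are attempting to prove is a \emph{conjecture} in the paper, not a theorem; the paper offers no proof and explicitly presents it as open. So there is no ``paper's proof'' to compare against.

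That said, your reduction is exactly the reasoning the paper uses to \emph{motivate} the conjecture. The paragraph immediately preceding the conjecture says that the isomorphism $\left(\mathrm{Gr}^P_\bullet H^*(M,\BQ),\overline{\cup}\right)\simeq\left(H^*(M,\BQ),\cup\right)$ is expected to hold sheaf-theoretically over the base, that its obstruction is global, and that therefore the ``local'' perverse filtration \eqref{perv_fil_C} should admit a multiplicative splitting; combining this with Theorem~\ref{thm0.8} yields the conjecture. Your first two paragraphs reproduce this logic precisely: the conjecture is equivalent to the existence of a multiplicative splitting $\sigma_\phi$ for each $\phi$, and the natural place to look for it is the versal family.

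Where your proposal ceases to be a proof is exactly where you say it does: the extension of the multiplicative splitting across the discriminant. You correctly flag that the known mechanisms --- the LLV algebra for compact hyper-K\"ahler varieties \cite{SY}, or $P=W$ for Hitchin systems \cite{dCHM1, MS_PW, HMMS, MSY} --- do not apply here, and that something new is needed. Your suggested route (an $\mathfrak{sl}_2\times\mathfrak{sl}_2$-action built from the relative Lefschetz operator and its Fourier dual) is a reasonable line of attack, but at present it is a hope rather than an argument: one would need to show that the two $\mathfrak{sl}_2$'s commute and act semisimply on the full pushforward, not just over $B^\circ$, and that the resulting weight decomposition is compatible with cup product. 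None of this is established in the paper or in the references you cite. In short, your proposal is a sound research outline that correctly isolates the missing ingredient, but it is not a proof --- and the paper does not claim one either.
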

\subsubsection{Intrinsic tautological ring}
For fine compactified Jacobians $\Jbar_{g,n}^{d,\phi}$ over the moduli space of stable curves, the universal family induces a natural notion of tautological subring
\[
RH^*(\Jbar_{g,n}^{d,\phi}) \subset H^*(\Jbar_{g,n}^{d,\phi},\BQ);
\]
we refer to Section~\ref{sec:4.2} and the references therein for details.

In analogy with the intrinsic cohomology ring, it is natural to take the associated graded of the tautological ring with respect to the perverse filtration
\begin{equation}\label{eq:RH}
    \mathbb{RH}^{d,\phi}_{g,n} := \bigoplus_{k,m} \mathrm{Gr}_k^P RH^m(\overline{J}^{d,\phi}_{g,n}) \subset \mathbb{H}^{d,\phi}_{g,n}.
\end{equation}
\begin{question}
    Does the isomorphism in Theorem \ref{thm0.7}(ii) respect the subring \eqref{eq:RH}? If so, can we describe the structure of $\mathbb{RH}^{d,\phi}_{g,n}$?
\end{question}

\subsection{Acknowledgements}
We are grateful to Dima Arinkin, Yang Cao, Soumik Ghosh, Sam Molcho, Miguel Moreira, Rahul Pandharipande, Dan Petersen, Weite Pi, Aaron Pixton, Johannes Schmitt, and Charles Vial for helpful discussions. The results proven here were first presented by J.S.~and Q.Y.~in the workshop ``A Saturday of Moduli'' at ETH Z\"urich in June 2025.

Y.B.~was supported by June E Huh Visiting Fellowship. D.M.~was supported by a Simons Investigator Grant. J.S.~was supported by the NSF Grant DMS-2301474 and a Sloan Research Fellowship.

\section{Compactified Jacobians} \label{sec:2}
In this section, we consider integral locally planar curves and their compactified Jacobians. We begin by reviewing results in \cite{MSY} which provide a new characterization of the perverse filtration on the cohomology of a compactified Jacobian family and establish its multiplicativity. We then prove that the associated graded with respect to the perverse filtration has a ring structure which is independent of the degree of the compactified Jacobian; see Theorem~\ref{thm:mainint} and Corollary \ref{cor:mainint}.

\subsection{Perverse filtration} \label{sec:2.1}

Let $C \to B$ be a flat projective family of integral locally planar curves of arithmetic genus $g$ over a nonsingular quasiprojective base variety $B$, and for any integer~$d$, let
\[
\pi_d: \Jbar^d_C \to B
\]
be the associated compactifed Jacobian family parameterizing degree $d$ rank $1$ torsion-free sheaves. We assume that the total space $\Jbar^d_C$ is nonsingular. Of particular interest is the case~$B = \Mgint$, \emph{i.e.}, the moduli stack of integral stable curves of genus $g$. Using the level structures constructed in \cite{ACV}, we can always reduce this case to when the base $B$ is a quasiprojective variety.

The Beilinson--Bernstein--Deligne--Gabber decomposition theorem \cite{BBD}, applied to the proper map $\pi_d: \Jbar^d_C \to B$ and the constant sheaf $\BQ_{\Jbar_C^d}$, yields a \emph{non-canonical} decomposition
\begin{equation} \label{eq:BBD}
\pi_{d*}\BQ_{\Jbar_C^d} \simeq \bigoplus_{k = 0}^{2g}\CH^d_{(k)}, \quad \CH^d_{(k)} := {^\Fp}H^{k + \dim B}(\pi_{d*}\BQ_{\Jbar_C^d})[-k - \dim B] \in D^b_c(B).
\end{equation}
Here ${^\Fp}H^i(-)$ is the $i$-th perverse cohomology functor. What remain canonical are the perverse truncation functors ${^\Fp}\tau_{\leq \bullet}(-), {^\Fp}\tau_{\geq \bullet}(-)$ applied to the pushforward complex $\pi_{d*}\BQ_{\Jbar_C^d}$: for \mbox{$0 \leq k \leq 2g$}, there are natural distinguished triangles
\begin{gather}
{^\Fp}\tau_{\leq k + \dim B}(\pi_{d*}\BQ_{\Jbar_C^d}) \to \pi_{d*}\BQ_{\Jbar_C^d} \to {^\Fp}\tau_{\geq k + 1+ \dim B}(\pi_{d*}\BQ_{\Jbar_C^d}) \xrightarrow{+1}, \nonumber\\
{^\Fp}\tau_{\leq k - 1+ \dim B}(\pi_{d*}\BQ_{\Jbar_C^d}) \to {^\Fp}\tau_{\leq k + \dim B}(\pi_{d*}\BQ_{\Jbar_C^d}) \to \CH^d_{(k)} \xrightarrow{+1}. \label{eq:disttri}
\end{gather}

Taking global cohomology and setting
\[
P_kH^*(\Jbar^d_C, \BQ) := \mathrm{Im}\left(H^*(B, {^\Fp}\tau_{\leq k + \dim B}(\pi_{d*}\BQ_{\Jbar_C^d})) \to H^*(B, \pi_{d*}\BQ_{\Jbar_C^d}) = H^*(\Jbar_C^d, \BQ)\right),
\]
we obtain the perverse filtration
\[
P_0 H^*(\overline{J}^d_C, \BQ) \subset P_1H^*(\overline{J}^d_C, \BQ) \subset \cdots \subset P_{2g}H^*(\overline{J}^d_C, \BQ) = H^*(\overline{J}^d_C, \BQ)
\]
and the canonical identification
\[
\Gr_k^PH^*(\Jbar_C^d, \BQ) = H^*(B, \CH^d_{(k)}).
\]

\subsection{Perverse filtration via Fourier} \label{sec:2.2}

A new characterization of the perverse filtration $P_\bullet H^*(\Jbar_C^d, \BQ)$ (as well as its sheaf-theoretic counterpart) was given in \cite{MSY} using Fourier transforms. We review the construction and key results needed for the main Theorem \ref{thm:mainint}. These results will be axiomatized and extended to more general situations in Section \ref{sec:3}.

We fix once and for all
\[
\Jbar^\vee_C:= \Jbar^0_C, \quad \pi^\vee:= \pi_0: \Jbar^\vee_C \to B.
\]
The perverse truncations ${^\Fp}\tau_{\leq \bullet}(\pi^\vee_*\BQ_{\Jbar^\vee_C}), {^\Fp}\tau_{\geq \bullet}(\pi^\vee_*\BQ_{\Jbar^\vee_C})$, the shifted perverse sheaves $\CH^\vee_{(\bullet)}$, and the perverse filtration $P_\bullet H^*(\Jbar^\vee_C, \BQ)$ are defined accordingly.

By considering families of rank $1$ torsion-free sheaves trivialized along an $r$-fold multisection
\[
D \subset C \to B,
\]
whose existence follows from \cite[Theorem 1.1]{Cle}, and by applying Arinkin's construction~\cite{A2}, we obtain a Poincar\'e sheaf
\[
\CP^d \in \Coh(\overline{\mathcal{J}}^\vee_C \times \overline{\mathcal{J}}^d_C)_{(d, 0)}
\]
on the relative product of certain $\mu_r$-gerbes $\overline{\mathcal{J}}^\vee_C, \overline{\mathcal{J}}^d_C$ over $\Jbar^\vee_C, \Jbar^d_C$; see \cite[Proposition 4.3]{MSY}. We then use singular Riemann--Roch for quotient stacks \cite{EG, EG2} to define the Chow-theoretic Fourier transform
\[
\FF^d = \sum_i\FF^d_i \in \Chow_*(\Jbar_C^\vee \times_B \Jbar^d_C), \quad \FF^d_i \in \Chow_{2g - i + \dim B}(\Jbar_C^\vee \times_B \Jbar^d_C)
\]
and its inverse
\[
(\FF^d)^{-1} = \sum_i(\FF^d)^{-1}_i \in \Chow_*(\Jbar_C^d \times_B \Jbar^\vee_C), \quad (\FF^d)^{-1}_i \in \Chow_{2g - i + \dim B}(\Jbar_C^d \times_B \Jbar^\vee_C);
\]
see \cite[Sections 2.4 and 4.4]{MSY}. Note that here and throughout, all Chow groups are taken with $\BQ$-coefficients. Further applying the cycle class maps
\begin{gather*}
\begin{multlined}[.9\textwidth]
\Chow_{2g - i + \dim B}(\Jbar_C^\vee \times_B \Jbar^d_C) \to H^{\mathrm{BM}}_{4g - 2i + 2\dim B}(\Jbar_C^\vee \times_B \Jbar^d_C, \BQ) \\
\simeq \Hom_{D^b_c(B)}(\pi^\vee_*\BQ_{\Jbar_C^\vee}, \pi_{d*}\BQ_{\Jbar_C^d}[2i - 2g]),
\end{multlined} \\
\begin{multlined}[.9\textwidth]
\Chow_{2g - i + \dim B}(\Jbar_C^d \times_B \Jbar_C^\vee) \to H^{\mathrm{BM}}_{4g - 2i + 2\dim B}(\Jbar_C^d \times_B \Jbar^\vee_C, \BQ) \\
\simeq \Hom_{D^b_c(B)}(\pi_{d*}\BQ_{\Jbar_C^d}, \pi^\vee_{*}\BQ_{\Jbar_C^\vee}[2i - 2g]),
\end{multlined}
\end{gather*}
we obtain the sheaf-theoretic Fourier transform and its inverse
\begin{gather*}
\FF^d = \sum_i\FF^d_i: \pi^\vee_*\BQ_{\Jbar_C^\vee} \to \pi_{d*}\BQ_{\Jbar_C^d}[-], \quad \FF^d_i: \pi^\vee_*\BQ_{\Jbar_C^\vee} \to \pi_{d*}\BQ_{\Jbar_C^d}[2i - 2g] \\
(\FF^d)^{-1} = \sum_i(\FF^d)^{-1}_i: \pi_{d*}\BQ_{\Jbar_C^d} \to \pi^\vee_*\BQ_{\Jbar_C^\vee}[-], \quad (\FF^d)^{-1}_i: \pi_{d*}\BQ_{\Jbar_C^d} \to \pi^\vee_*\BQ_{\Jbar_C^\vee}[2i - 2g];
\end{gather*}
see \cite[Section 2.2.2]{MSY}.

The starting point is the following weaker version of the Fourier vanishing, as compared with the Fourier vanishing for abelian schemes in \cite{DM}; see also \cite[Section 3.4]{MSY}.

\begin{prop}[Fourier vanishing {\cite[Sections 3.5 and 4.4]{MSY}}] \label{prop:halfFV}
For all $i + j < 2g$, we have
\begin{gather*}
\tag{FV1} (\FF^d)^{-1}_j\circ \FF^d_i = 0 \in \Chow_{3g - i - j + \dim B}(\Jbar_C^\vee \times_B \Jbar_C^\vee), \\
\tag{FV2} \FF^d_j \circ (\FF^d)^{-1}_i = 0 \in \Chow_{3g - i - j + \dim B}(\Jbar_C^d \times_B \Jbar_C^d).
\end{gather*}
\end{prop}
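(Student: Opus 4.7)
The strategy is to adapt the smooth-case argument of Proposition \ref{prop:FV} to the singular setting, the crucial point being that the multiplication-by-$N$ endomorphism of the relative abelian scheme only exists on the open Jacobian locus $J^\vee_C \subset \Jbar^\vee_C$ parameterizing line bundles, not on the full compactified Jacobian. My first step is to restrict the Chow-theoretic Fourier kernels to $J^\vee_C \times_B \Jbar^d_C$ and $\Jbar^d_C \times_B J^\vee_C$. On these open loci the Arinkin Poincar\'e sheaf $\CP^d$ restricts to an honest line bundle, and the norm-trivialized lift of $[N]: J^\vee_C \to J^\vee_C$ for $N \equiv 1 \pmod r$ can be constructed exactly as in Lemma \ref{lem:modr}, yielding
\[
([N] \times \mathrm{id})^* \FF^d_i \big|_{J^\vee_C \times_B \Jbar^d_C} = N^i \FF^d_i \big|_{J^\vee_C \times_B \Jbar^d_C},
\]
and symmetrically for $(\FF^d)^{-1}_j$ restricted to $\Jbar^d_C \times_B J^\vee_C$.

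The second step is the polynomial-in-$N$ trick. Separating Chow dimensions in $(\FF^d)^{-1} \circ \FF^d = [\Delta_{\Jbar^\vee_C/B}]$ yields $\sum_{i + j = k} (\FF^d)^{-1}_j \circ \FF^d_i = 0$ whenever $k \neq 2g$; restricting to $J^\vee_C \times_B J^\vee_C$ and applying $([N] \times \mathrm{id})^*$ transforms this into $\sum_{i+j = k} N^i (\FF^d)^{-1}_j \circ \FF^d_i|_{J^\vee_C \times_B J^\vee_C} = 0$ for infinitely many $N$, forcing each individual term to vanish on the open locus. The parallel computation on $\Jbar^d_C \times_B \Jbar^d_C$, using the $J^\vee_C$-torsor action on $\Jbar^d_C$ to transport the $[N]$-trick to the second factor, handles the dual statement ($\tfrac{1}{2}$FV2).

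The third step is to promote this open-locus vanishing to the full compactified Jacobian. By the Chow-theoretic localization exact sequence, $(\FF^d)^{-1}_j \circ \FF^d_i$ is represented by a cycle supported on the closed complement $Z \subset \Jbar^\vee_C \times_B \Jbar^\vee_C$ of $J^\vee_C \times_B J^\vee_C$, which sits inside the union $(\Jbar^\vee_C \setminus J^\vee_C) \times_B \Jbar^\vee_C \cup \Jbar^\vee_C \times_B (\Jbar^\vee_C \setminus J^\vee_C)$. For $i + j$ small a pure dimension count against $Z$ already rules out nonzero cycles of dimension $3g - i - j + \dim B$, since the non-locally-free locus in $\Jbar^\vee_C$ is of positive codimension for integral locally planar curves.

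The main obstacle, and the likely source of the asymmetry between $i + j < 2g$ (vanishing asserted) and $i + j > 2g$ (no analogous claim), is the intermediate regime where this naive dimension count against $Z$ fails. To close this gap I would invoke Arinkin's support theorem for the compactified Jacobian, together with the refined structure of the Poincar\'e sheaf $\CP^d$ along the boundary strata of $\Jbar^\vee_C \times_B \Jbar^\vee_C$ developed in \cite[Sections~3.5 and 4.4]{MSY}, to constrain the support of compositions of Fourier kernels near the boundary and rule out nonzero cycles of the required dimension throughout the full range $i + j < 2g$. This support-theoretic input is the technical heart of the argument and the essential new ingredient beyond the smooth case.
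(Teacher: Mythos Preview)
Your strategy diverges from the paper's (i.e.\ \cite{MSY}'s) approach, and the third step contains a genuine gap that is not closed by the vague appeal to ``Arinkin's support theorem.'' The localization sequence tells you only that $(\FF^d)^{-1}_j \circ \FF^d_i$ is supported on the boundary $Z$; it does not tell you that the cycle is zero there. A naive dimension count against $Z$ fails well before you reach the full range $i+j < 2g$, and nothing you have written explains how support constraints on $\CP^d$ along boundary strata would force the \emph{composition} of correspondences to vanish in the remaining range. The problem is that your geometric $[N]$-trick lives only on the open locus, so you are forced into an extension argument that you cannot complete.

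The actual argument in \cite{MSY} (sketched in the paper for the reduced case in Proposition~\ref{FV_reduced}) avoids the open-locus restriction entirely. One works directly with the full kernel
\[
\widetilde{\CK}(N) := (\CP^d)^{-1} \circ (i_*\CP^d)^{\otimes N} \in D^b\Coh(\Jbar^\vee_C \times \Jbar^\vee_C),
\]
and proves, via an Arinkin-type argument using $\delta$-regularity, that $\mathrm{Supp}(\widetilde{\CK}(N))$ has codimension $\geq g$ in $\Jbar^\vee_C \times_B \Jbar^\vee_C$ for every $N$. This immediately bounds the dimension of the associated Chow class by $g + \dim B$, which is exactly the threshold $3g - (i+j) + \dim B \leq g + \dim B \Leftrightarrow i+j \geq 2g$. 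The separation of the individual components $(\FF^d)^{-1}_j \circ \FF^d_i$ is then achieved not by the geometric map $[N]$ (which does not exist on $\Jbar^\vee_C$) but by \emph{Adams operations}, which act on the Chern character by $\psi^N\ch_i = N^i\ch_i$ purely algebraically and are defined on the whole compactified Jacobian. This is why only \emph{half} of the Fourier vanishing survives: the codimension bound is one-sided, in contrast to the smooth case where the exact identity $[N]^*\CP = \CP^{\otimes N}$ gives both halves.
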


Note that only (FV1) was stated and used in \cite{MSY}. Here for our purpose we shall need (FV2) whose proof is identical to that of (FV1); see also \cite[Proposition 2.9]{MSY2}.

We define for $0 \leq k \leq 2g$ Chow self-correspondences
\begin{gather*}
\Fp^d_{\leq k} := \sum_{i \leq k}\FF^d_i \circ (\FF^d)^{-1}_{2g - i} \in \Chow_{g + \dim B}(\Jbar_C^d \times_B \Jbar^d_C), \\
\Fp^\vee_{\leq k} := \sum_{i \leq k} (\FF^d)^{-1}_i \circ \FF^d_{2g - i} \in \Chow_{g + \dim B}(\Jbar^\vee_C \times_B \Jbar^\vee_C).
\end{gather*}
With the weaker Fourier vanishing (FV1) (resp.~(FV2)) we can only conclude that the $\Fp^d_{\leq k}$ (resp.~$\Fp^\vee_{\leq k}$) are \emph{semi-orthogonal} idempotents, \emph{i.e.},
\begin{equation*} \label{eq:semiorth}
\Fp^d_{\leq l} \circ \Fp^d_{\leq k} = \Fp^d_{\leq k}, \quad \Fp^\vee_{\leq l} \circ \Fp^\vee_{\leq k} = \Fp^\vee_{\leq k}, \quad k \leq l. 
\end{equation*}
The same statements hold for the induced sheaf-theoretic idempotents
\begin{gather*}
\Fp^d_{\leq k} := \sum_{i \leq k}\FF^d_i \circ (\FF^d)^{-1}_{2g - i}: \pi_{d*}\BQ_{\Jbar_C^d} \to \pi_{d*}\BQ_{\Jbar_C^d}, \\
\Fp^\vee_{\leq k} := \sum_{i \leq k} (\FF^d)^{-1}_i \circ \FF^d_{2g - i} : \pi^\vee_*\BQ_{\Jbar_C^\vee} \to \pi^\vee_*\BQ_{\Jbar_C^\vee}.
\end{gather*}

We can talk about the image of the sheaf-theoretic $\Fp^d_{\leq k}$ (resp.~$\Fp^\vee_{\leq k}$) since the category~$D^b_c(B)$ is pseudo-abelian by \cite[Lemma 2.24]{CH}. The following result crucially relies on the \emph{full support} property of the pushforward complexes $\pi_{d*}\BQ_{\Jbar_C^d}$ and $\pi^\vee_*\BQ_{\Jbar_C^\vee}$; we will come back to this point later in Section \ref{sec:3}.

\begin{thm}[Realization {\cite[Corollary 4.6(ii)]{MSY}}] \label{thm:realize}
For $0 \leq k \leq 2g$, the natural inclusion $\mathrm{Im}(\Fp^d_{\leq k}) \to \pi_{d*}\BQ_{\Jbar_C^d}$ realizes the perverse truncation
\[
{^\Fp}\tau_{\leq k + \dim B}(\pi_{d*}\BQ_{\Jbar_C^d}) \to \pi_{d*}\BQ_{\Jbar_C^d}
\]
and provides a non-canonical splitting
\begin{equation} \label{eq:split1}
\pi_{d*}\BQ_{\Jbar_C^d} \simeq {^\Fp}\tau_{\leq k + \dim B}(\pi_{d*}\BQ_{\Jbar_C^d}) \oplus {^\Fp}\tau_{\geq k + 1+\dim B}(\pi_{d*}\BQ_{\Jbar_C^d}).
\end{equation}
Similarly, the inclusion $\mathrm{Im}(\Fp^\vee_{\leq k}) \to \pi^\vee_*\BQ_{\Jbar_C^\vee}$ realizes the perverse truncation
\[
{^\Fp}\tau_{\leq k + \dim B}(\pi^\vee_*\BQ_{\Jbar_C^\vee}) \to \pi^\vee_*\BQ_{\Jbar_C^\vee}
\]
and provides a non-canonical splitting
\begin{equation} \label{eq:split2}
\pi^\vee_*\BQ_{\Jbar_C^\vee} \simeq {^\Fp}\tau_{\leq k + \dim B}(\pi^\vee_*\BQ_{\Jbar_C^\vee}) \oplus {^\Fp}\tau_{\geq k + 1+\dim B}(\pi^\vee_*\BQ_{\Jbar_C^\vee}).
\end{equation}
\end{thm}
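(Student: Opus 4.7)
The plan is to prove Theorem~\ref{thm:realize} by establishing two key properties of the Chow correspondence $\Fp^d_{\leq k}$ acting as an endomorphism of $\pi_{d*}\BQ_{\Jbar^d_C}$: that its image lies inside the perverse truncation ${^\Fp}\tau_{\leq k + \dim B}(\pi_{d*}\BQ_{\Jbar^d_C})$, and that its restriction to ${^\Fp}\tau_{\leq k + \dim B}(\pi_{d*}\BQ_{\Jbar^d_C})$ is the identity. Combined with the semi-orthogonality already noted in the excerpt (which follows from Proposition~\ref{prop:halfFV}) and the pseudo-abelian structure of $D^b_c(B)$, these two properties identify the image of $\Fp^d_{\leq k}$ with the perverse truncation and realize the non-canonical splitting \eqref{eq:split1}; the complementary summand is the image of $\mathrm{id} - \Fp^d_{\leq k}$. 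The same argument then yields \eqref{eq:split2} on the $\pi^\vee_*\BQ_{\Jbar^\vee_C}$ side.

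The technical heart is the following sharpening of Proposition~\ref{prop:halfFV}: for every $0 \leq i \leq 2g$, the endomorphism $\FF^d_i \circ (\FF^d)^{-1}_{2g-i}$ acts as the identity on ${^\Fp}H^{i + \dim B}(\pi_{d*}\BQ_{\Jbar^d_C})$ and as zero on ${^\Fp}H^{j + \dim B}(\pi_{d*}\BQ_{\Jbar^d_C})$ for all $j \neq i$. To prove this, I would restrict to the dense open locus $B^\circ \subset B$ over which $C \to B$ is smooth, so that $\pi_d|_{B^\circ}$ is the smooth abelian scheme treated in Section~\ref{sec:1.1}. Over $B^\circ$, Theorem~\ref{thm:mainsm}(i) directly gives the claimed orthogonality, because the Chow correspondences $\FF^d_i$ and $(\FF^d)^{-1}_{2g-i}$ restrict to their nonsingular counterparts by functoriality of the Poincar\'e sheaf. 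The full support property, namely that each ${^\Fp}H^{j + \dim B}(\pi_{d*}\BQ_{\Jbar^d_C})$ is the intermediate extension of its restriction to $B^\circ$, then promotes the orthogonality from $B^\circ$ to all of $B$, since morphisms between intermediate extensions are determined by their restrictions to any dense open subset.

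Given this sharpening, both desired properties of $\Fp^d_{\leq k} = \sum_{i \leq k} \FF^d_i \circ (\FF^d)^{-1}_{2g-i}$ are immediate. The image lands in $\bigoplus_{i \leq k} {^\Fp}H^{i + \dim B}(\pi_{d*}\BQ_{\Jbar^d_C})[-i - \dim B]$, which coincides abstractly with ${^\Fp}\tau_{\leq k + \dim B}(\pi_{d*}\BQ_{\Jbar^d_C})$. Moreover, $\Fp^d_{\leq k}$ restricts to the identity on this subobject, because the total sum $\sum_{i=0}^{2g} \FF^d_i \circ (\FF^d)^{-1}_{2g-i}$ realizes the identity of $\pi_{d*}\BQ_{\Jbar^d_C}$, being the degree-zero (codimension $g$) component of $\FF^d \circ (\FF^d)^{-1}$, which recovers the relative diagonal via the Fourier--Mukai equivalence of Arinkin. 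The main obstacle is precisely this sharpening of Proposition~\ref{prop:halfFV}: while the half Fourier vanishing gives only one-sided control on Chow self-correspondences, extracting the complementary vanishing on individual perverse cohomology sheaves requires genuine geometric input from the support theorem underlying the full support hypothesis. This is where Arinkin's Poincar\'e sheaf on locally planar integral curves plays an essential role, and where the argument fundamentally departs from the purely formal manipulations available in the nonsingular case.
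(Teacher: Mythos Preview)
Your approach is essentially the one taken in \cite{MSY} (which the paper cites without reproducing the proof): reduce the identification of $\mathrm{Im}(\Fp^d_{\leq k})$ with the perverse truncation to a statement about the induced action on perverse cohomology, and verify that statement by restricting to the smooth locus $B^\circ$ where full support guarantees that endomorphisms of the (semisimple, IC) perverse cohomology sheaves are determined by their restrictions. The deduction from your ``sharpening'' to the theorem is correct, though not quite as immediate as you suggest: one must argue that the idempotent decomposition $\pi_{d*}\BQ_{\Jbar^d_C} = \mathrm{Im}(\Fp^d_{\leq k}) \oplus \mathrm{Im}(\Fq^d_{\geq k+1})$ forces $\mathrm{Im}(\Fp^d_{\leq k}) \in {^\Fp}D^{\leq k + \dim B}$ and $\mathrm{Im}(\Fq^d_{\geq k+1}) \in {^\Fp}D^{\geq k+1+\dim B}$, whence the inclusion $\mathrm{Im}(\Fp^d_{\leq k}) \hookrightarrow \pi_{d*}\BQ_{\Jbar^d_C}$ factors through and is isomorphic to ${^\Fp}\tau_{\leq k + \dim B}(\pi_{d*}\BQ_{\Jbar^d_C})$.

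One technical point deserves care. When you invoke Theorem~\ref{thm:mainsm}(i) over $B^\circ$, you implicitly assume that the restriction of the Arinkin--Poincar\'e sheaf from Section~\ref{sec:2.2} agrees with the Poincar\'e line bundle of Section~\ref{sec:1.1}. The trivializations differ (see Remark~\ref{rmk:norm}: norm map versus determinant of pushforward), so the two $\FF^d$ may differ by twisting with Chern characters of line bundles pulled back from the factors. This does not affect the \emph{induced} action on perverse cohomology, since such twists act unipotently with respect to the cohomological grading over any fiber, but strictly speaking Theorem~\ref{thm:mainsm}(i) as stated does not apply verbatim. The cleanest fix is to check the claim directly at the generic point of $B$, where Beauville's full Fourier vanishing holds for any normalization of the Poincar\'e bundle on a single abelian variety, and this is what is actually needed.
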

Here the splitting \eqref{eq:split1} (resp.~\eqref{eq:split2}) is given by the image of
\begin{equation} \label{eq:defq}
\Fq^d_{\geq k + 1} := \mathrm{id}_{\pi_{d*}\BQ_{\Jbar_C^d}} - \Fp^d_{\leq k} = \sum_{i \geq k + 1}\FF^d_i \circ (\FF^d)^{-1}_{2g - i}: \pi_{d*}\BQ_{\Jbar_C^d} \to \pi_{d*}\BQ_{\Jbar_C^d},
\end{equation}
and respectively,
\begin{equation} \label{eq:defq2}
\Fq^\vee_{\geq k + 1} := \mathrm{id}_{\pi^\vee_*\BQ_{\Jbar_C^\vee}} - \Fp^\vee_{\leq k} = \sum_{i \geq k + 1} (\FF^d)^{-1}_i \circ \FF^d_{2g - i} : \pi^\vee_*\BQ_{\Jbar_C^\vee} \to \pi^\vee_*\BQ_{\Jbar_C^\vee}.
\end{equation}


An important outcome of Theorem \ref{thm:realize} is the multiplicativity of the perverse filtration $P_\bullet H^*(\Jbar_C^d, \BQ)$. There is also a stronger, sheaf-theoretic version concerning the cup-product
\begin{equation} \label{eq:cupprod}
\cup: \pi_{d*}\BQ_{\Jbar_C^d} \otimes \pi_{d*}\BQ_{\Jbar_C^d} \to \pi_{d*}\BQ_{\Jbar_C^d}.
\end{equation}

\begin{thm}[Multiplicativity {\cite[Corollary 4.6(iii)]{MSY}}] \label{thm:mult}
For integers $k, l$, the perverse truncation
\[
\cup: {^\Fp}\tau_{\leq k + \dim B}(\pi_{d*}\BQ_{\Jbar_C^d})  \otimes {^\Fp}\tau_{\leq l + \dim B}(\pi_{d*}\BQ_{\Jbar_C^d}) \to \pi_{d*}\BQ_{\Jbar_C^d}.
\]
of the cup-product in \eqref{eq:cupprod} factors through
\[
\cup: {^\Fp}\tau_{\leq k + \dim B}(\pi_{d*}\BQ_{\Jbar_C^d})  \otimes {^\Fp}\tau_{\leq l + \dim B}(\pi_{d*}\BQ_{\Jbar_C^d}) \to {^\Fp}\tau_{\leq k + l + \dim B}(\pi_{d*}\BQ_{\Jbar_C^d}).
\]
In other words, the composition
\begin{multline*}
{^\Fp}\tau_{\leq k + \dim B}(\pi_{d*}\BQ_{\Jbar_C^d})  \otimes {^\Fp}\tau_{\leq l + \dim B}(\pi_{d*}\BQ_{\Jbar_C^d}) \to \pi_{d*}\BQ_{\Jbar_C^d} \otimes \pi_{d*}\BQ_{\Jbar_C^d} \xrightarrow{\cup} \pi_{d*}\BQ_{\Jbar_C^d} \\
\to \tau_{\geq k + l + 1 + \dim B}(\pi_{d*}\BQ_{\Jbar_C^d})
\end{multline*}
is zero.
\end{thm}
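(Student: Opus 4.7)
The plan is to reformulate multiplicativity as a vanishing of Chow-theoretic compositions, translate this vanishing across the Fourier transform into a statement about the convolution product, and then harvest the two halves of Proposition \ref{prop:halfFV}.

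First, I would invoke Theorem \ref{thm:realize} to replace the abstract perverse truncations by the explicit Chow self-correspondences. The inclusion ${}^\Fp\tau_{\leq k + \dim B}(\pi_{d*}\BQ_{\Jbar_C^d}) \hookrightarrow \pi_{d*}\BQ_{\Jbar_C^d}$ is realized as $\mathrm{Im}(\Fp^d_{\leq k}) \hookrightarrow \pi_{d*}\BQ_{\Jbar_C^d}$, and the complementary projection onto ${}^\Fp\tau_{\geq k+l+1+\dim B}(\pi_{d*}\BQ_{\Jbar_C^d})$ is, via the splitting \eqref{eq:split1}, the projector $\Fq^d_{\geq k+l+1}$ from \eqref{eq:defq}. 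Hence the displayed composition is zero if and only if
\[
\Fq^d_{\geq k+l+1} \circ \cup \circ (\Fp^d_{\leq k} \otimes \Fp^d_{\leq l}) = 0
\]
as morphisms $\pi_{d*}\BQ_{\Jbar_C^d} \otimes \pi_{d*}\BQ_{\Jbar_C^d} \to \pi_{d*}\BQ_{\Jbar_C^d}$.

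Next, I would expand each projector via its definition and pass through the Fourier transform. Writing $\Fp^d_{\leq k} = \sum_{a \leq k} \FF^d_a \circ (\FF^d)^{-1}_{2g-a}$ and likewise for $\Fp^d_{\leq l}$ and $\Fq^d_{\geq k+l+1}$, the composition becomes a triple sum over indices $a \leq k$, $b \leq l$, $c \geq k+l+1$ of terms whose middle factor is $(\FF^d)^{-1}_{2g-c} \circ \cup \circ (\FF^d_a \otimes \FF^d_b)$. Via the Fourier-to-convolution identity $\ast = (\FF^d)^{-1} \circ \cup \circ (\FF^d \otimes \FF^d)$ (the sheaf-theoretic analogue of the smooth-case identity in \eqref{eq:fouriercomp}, now induced by the Arinkin sheaf), a shift count identifies this middle factor with a graded component of the form $(\FF^d)^{-1}_j \circ \FF^d_i$ (or a transpose $\FF^d_j \circ (\FF^d)^{-1}_i$ after regrouping), whose indices $i, j$ are determined linearly by $(a, b, c)$.

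The punchline is half Fourier vanishing. The constraints $a \leq k$, $b \leq l$, and $c \geq k+l+1$ force the strict inequality $c > a+b$, which after the shift count translates into $i + j < 2g$ for the Fourier pair appearing in the middle. Proposition \ref{prop:halfFV} then kills each such summand via ($\frac{1}{2}$FV1) or ($\frac{1}{2}$FV2), and the whole composition vanishes. This gives the multiplicativity of the perverse filtration as stated.

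The main obstacle will be the careful bookkeeping in the singular case: unlike in the smooth situation of Proposition \ref{prop:convker}, where the convolution class is pure of codimension $g$, here the convolution kernel splits into several graded components and one must track how each component interacts with the idempotents $\Fp^d_{\leq k}$. Verifying that the pigeonhole inequality $c > a+b$ always lands in the vanishing range $i+j < 2g$, and accounting for \emph{both} directions of half-Fourier vanishing (this is precisely why the paper explicitly records ($\frac{1}{2}$FV2) in addition to the previously used ($\frac{1}{2}$FV1)), is the technical crux.
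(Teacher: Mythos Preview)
Your reduction to showing $\Fq^d_{\geq k+l+1} \circ \cup \circ (\Fp^d_{\leq k} \otimes \Fp^d_{\leq l}) = 0$ via Theorem \ref{thm:realize} is correct, and the idea of passing through the Fourier transform is the right one. But there is a genuine gap in the mechanism you propose for the vanishing. The ``middle factor'' $(\FF^d)^{-1}_{2g-c} \circ \cup \circ (\FF^d_a \otimes \FF^d_b)$ is \emph{not} of the form $(\FF^d)^{-1}_j \circ \FF^d_i$: it is a graded component of the convolution class $\FC^d$, which lives on the \emph{triple} product $\Jbar^\vee_C \times_B \Jbar^\vee_C \times_B \Jbar^\vee_C$, whereas the Fourier compositions in Proposition \ref{prop:halfFV} live on double products. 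No amount of shift-counting turns a three-factor correspondence into a two-factor one, so ($\frac{1}{2}$FV1)/($\frac{1}{2}$FV2) cannot kill this piece directly.

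What actually does the work is the separate geometric input recorded in the paper as the dimension bound \eqref{eq:dimbound}: the convolution kernel $\CK^d$ is supported in codimension $\geq g$, so $\FC^d \in \Chow_{\leq 2g+\dim B}$. In your notation, the component $(\FF^d)^{-1}_{2g-c}\circ\cup\circ(\FF^d_a\otimes\FF^d_b)$ contributes a shift of $2(a+b-c)-2g$; since $a\leq k$, $b\leq l$, $c\geq k+l+1$ force $a+b<c$, this shift is strictly below $-2g$, and the dimension bound makes that component vanish. The role of ($\frac{1}{2}$FV) is then only on the outer pieces (to control which $(\FF^d)^{-1}_i \circ \Fp^d_{\leq k}$ survive and to show $\FF^d_m$ lands in ${}^\Fp\tau_{\leq m+\dim B}$); in fact the paper notes that only ($\frac{1}{2}$FV1) was used in \cite{MSY} for this theorem --- ($\frac{1}{2}$FV2) is introduced here for the later Theorem \ref{thm:mainint}, not for multiplicativity. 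So your diagnosis that both halves are the ``technical crux'' is off: the crux is the support estimate for $\CK^d$, which you allude to in your last paragraph but never actually invoke.
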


As a result, the cup-product naturally descends via \eqref{eq:disttri} to a collection of morphisms
\begin{equation} \label{eq:cupbarkl}
\overline{\cup}: \CH^d_{(k)} \otimes \CH^d_{(l)} \to \CH^d_{(k + l)}.
\end{equation}
Setting the sheaf-theoretic associated graded
\begin{equation} \label{eq:defCH}
\CH^d := \bigoplus_{k = 0}^{2g} \CH^d_{(k)} \in D^b_c(B),
\end{equation}
we obtain a cup-product
\begin{equation} \label{eq:cupbar}
\overline{\cup}: \CH^d \otimes \CH^d \to \CH^d
\end{equation}
making $\CH^d$ a graded ring object in $D^b_c(B)$. Note however that \eqref{eq:cupprod} and \eqref{eq:cupbar} are in general \emph{not} compatible with the non-canonical isomorphism \eqref{eq:BBD}.

The proof of Theorem \ref{thm:mult} involves the convolution product defined via the Chow~class
\[
\FC^d := (\FF^d)^{-1} \circ [\Delta^{\mathrm{sm}}_{\Jbar^d_C/B}] \circ(\FF^d \times \FF^d) \in \Chow_*(\Jbar^\vee_C \times_B \Jbar^\vee_C \times_B \Jbar^\vee_C),
\]
where $\Delta^{\mathrm{sm}}_{\Jbar^d_C/B} \subset \Jbar^d_C \times_B \Jbar^d_C \times_B \Jbar^d_C$ is the relative small diagonal responsible for the cup-product~\eqref{eq:cupprod}. A key step in establishing Theorem \ref{thm:mult} is a dimension bound
\begin{equation} \label{eq:dimbound}
\FC^d \in \Chow_{\leq 2g + \dim B}(\Jbar^\vee_C \times_B \Jbar^\vee_C \times_B \Jbar^\vee_C).
\end{equation}
The proof of the dimension bound further traces back to the support of the convolution kernel
\begin{equation} \label{eq:convker2}
\CK^d:= (\CP^d)^{-1} \circ \CO_{\Delta^{\mathrm{sm}}_{\overline{\CJ}_C^d/B}} \circ(\CP^d \boxtimes \CP^d) \in D^b\Coh(\overline{\CJ}_C^\vee \times_B \overline{\CJ}^\vee_C \times_B \overline{\CJ}^\vee_C)_{(d, d, -d)}.
\end{equation}
Here $\CO_{\Delta^{\mathrm{sm}}_{\overline{\CJ}_C^d/B}}$ is the structure sheaf of the $\mu_r^{\times 3}$-gerbe over $\Delta^{\mathrm{sm}}_{\Jbar^d_C/B}$; see \cite[Section 4.4]{MSY}.

Via the cycle class map
\begin{multline*}
\Chow_{\leq 2g + \dim B}(\Jbar^\vee_C \times_B \Jbar^\vee_C \times_B \Jbar^\vee_C) \to H^{\mathrm{BM}}_{\leq 4g + 2\dim B}(\Jbar^\vee_C \times_B \Jbar^\vee_C \times_B \Jbar^\vee_C, \BQ) \\ \simeq \mathrm{Hom}_{D^b_c(B)}(\pi^\vee_{*}\BQ_{\Jbar^\vee_C} \otimes \pi^\vee_{*}\BQ_{\Jbar^\vee_C}, \pi^\vee_{*}\BQ_{\Jbar^\vee_C}[\geq \!{-2g}])
\end{multline*}
the class $\FC^d$ induces the sheaf-theoretic convolution product
\begin{equation*} \label{eq:convd}
*^d: \pi^\vee_{*}\BQ_{J^\vee_C} \otimes \pi^\vee_{*}\BQ_{J^\vee_C} \to \pi^\vee_{*}\BQ_{J^\vee_C}[\geq \!{-2g}];
\end{equation*}
see \cite[Section 2.2.3]{MSY}. By definition, there is a commutative diagram interchanging the two product structures
\begin{equation} \label{eq:commconv}
\begin{tikzcd}
\pi^\vee_{*}\BQ_{\Jbar^\vee_C} \otimes \pi^\vee_{*}\BQ_{\Jbar^\vee_C} \arrow[r, "*^d"] \arrow[d, "\FF^d \otimes \FF^d"] & \pi^\vee_{*}\BQ_{\Jbar^\vee_C}[\geq \!-2g] \arrow[d, "\FF^d"] \\
\pi_{d*}\BQ_{\Jbar^d_C}[-] \otimes \pi_{d*}\BQ_{\Jbar^d_C}[-] \arrow[r, "\cup"] & \pi_{d*}\BQ_{\Jbar^d_C}[-].
\end{tikzcd}
\end{equation}

We are particularly interested in the lowest codimension ($= \!g$) component of $\FC^d$ in \eqref{eq:dimbound}; we call it the \emph{reduced} convolution class
\begin{equation} \label{eq:convkerbar}
\FC^d_\mathrm{red} \in \Chow_{2g + \dim B}(\Jbar^\vee_C \times_B \Jbar^\vee_C \times_B \Jbar^\vee_C).
\end{equation}
The following proposition is crucial to the proof of the degree-independence Theorem \ref{thm:mainint}(iii).

\begin{prop} \label{prop:trunC}
The reduced convolution class $\FC^d_{\mathrm{red}}$ in \eqref{eq:convkerbar} is independent of the degree~$d$.
\end{prop}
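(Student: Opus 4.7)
The plan is to mirror the strategy of Proposition \ref{prop:convker}, but to retain only the lowest-codimension (\emph{i.e.}, top-dimensional) part of each identity. Since $\FC^d$ is \emph{a priori} of mixed codimension in the singular setting, tensoring the convolution kernel by line bundles pulled back from the three factors will only affect strictly higher-codimension contributions to the Chern character, and hence is harmless for the reduced class $\FC^d_\mathrm{red}$. As in the proofs of Lemma \ref{lem:modr} and Proposition \ref{prop:convker}, since all classes are taken with $\BQ$-coefficients, we may perform a finite flat base change along $D \to B$ and assume that $C \to B$ admits a section $s : B \to C$; this identifies $\overline{\CJ}^d_C$ with $\overline{\CJ}^\vee_C$ as $B$-stacks.

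By the singular analogue of \cite[Corollary 4.5]{MSY}, there exists a line bundle $\CL \in \Coh(\overline{\CJ}^\vee_C)_{(1)}$, depending on $s$ and $D$, such that
\[
\CP^d \simeq \CP^0 \otimes p^*_{\overline{\CJ}^\vee_C}\CL^{\otimes d} \in D^b\Coh(\overline{\CJ}^\vee_C \times_B \overline{\CJ}^d_C)_{(d,0)}.
\]
Substituting into the definition \eqref{eq:convker2} of the convolution kernel and letting $p_1, p_2, p_3$ denote the three projections from the triple fiber product to $\overline{\CJ}^\vee_C$, one reads off
\[
\CK^d \simeq \CK^0 \otimes p_1^*\CL^{\otimes d} \otimes p_2^*\CL^{\otimes d} \otimes p_3^*\CL^{\otimes(-d)}.
\]
Taking Chern characters then yields
\[
\ch(\CK^d) = \ch(\CK^0) \cdot \exp\!\bigl(d \cdot (p_1^*c_1(\CL) + p_2^*c_1(\CL) - p_3^*c_1(\CL))\bigr).
\]
The exponential factor equals $1$ in codimension zero and contributes only in strictly positive codimension, so the lowest-codimension component of $\ch(\CK^d)$ is independent of $d$. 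Converting $\ch(\CK^d)$ into $\FC^d$ via the singular Riemann--Roch formalism of \cite[Section 4.4]{MSY} amounts to multiplying by Todd classes of relative tangent complexes, which — after the reduction to a section — are pulled back from the same spaces for all $d$ and therefore $d$-independent. Taking the lowest-codimension component gives $\FC^d_\mathrm{red} = \FC^0_\mathrm{red}$.

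The principal technical point, which I expect to be the main obstacle, is the Poincar\'e sheaf comparison in the second paragraph. Unlike the smooth case — where the analogue follows from a direct see-saw argument applied to honest line bundles — Arinkin's Poincar\'e sheaf is not a line bundle, and its normalization behavior must be extracted from the explicit construction. However, the needed statement should be a modest variant of what has already been established in \cite[Section 4]{MSY}, and once it is in place the remainder of the argument is a short codimension-counting exercise that is in fact \emph{simpler} than its smooth counterpart, since we no longer need any see-saw vanishing to pin down $\FC^d$ \emph{exactly}, only its top-dimensional component.
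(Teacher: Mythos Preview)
Your proposal is correct and takes essentially the same approach as the paper: both rely on \cite[Corollary 4.5]{MSY} to see that the $\CK^d$ differ (\'etale locally, or after your base change) only by line bundle twists, which leave the top-dimensional cycle class unchanged. The paper's argument is marginally more direct---it simply notes that $\FC^d_\mathrm{red}$ is the fundamental class of the codimension-$g$ support of $\CK^d$, which is manifestly unchanged by tensoring with a line bundle---so the base change and Chern-character manipulation can be bypassed entirely.
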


\begin{proof}
Since the convolution kernel $\CK^d$ in \eqref{eq:convker2} is shown in \cite[Proposition 3.2 and Section~4.4]{MSY} to be supported in codimension $g$, the component $\FC^d_{\mathrm{red}}$ is simply given by the fundamental class of the codimension $g$ support of $\CK^d$. On the other hand, \cite[Corollary 4.5]{MSY} shows that \'etale locally the~$\CK^d$ only differ by tensoring a line bundle, and hence have the same support. In particular, the class $\FC^d_{\mathrm{red}}$ is independent of $d$.
\end{proof}

Therefore, $\FC^d_{\mathrm{red}}$ can be abbreviated to $\FC_{\mathrm{red}}$, and induces a reduced convolution product
\begin{equation} \label{eq:trunconv}
*_{\mathrm{red}}: \pi^\vee_{*}\BQ_{J^\vee_C} \otimes \pi^\vee_{*}\BQ_{J^\vee_C} \to \pi^\vee_{*}\BQ_{J^\vee_C}[-2g]
\end{equation}
which is independent of $d$.

We finish with a brief summary of the parallel statements in global cohomology.
\begin{enumerate}
\item[(i)] The cohomological Fourier transforms induce two sequences of semi-orthogonal idempotents $\Fp^d_{\leq k}, \Fp^\vee_{\leq k}$, which give a new description of the perverse filtrations
\begin{gather*}
P_kH^*(\Jbar_C^d, \BQ) = \mathrm{Im}\left(\Fp^d_{\leq k}: H^*(\Jbar_C^d, \BQ) \to H^*(\Jbar_C^d, \BQ)\right), \\
P_kH^*(\Jbar_C^\vee, \BQ) = \mathrm{Im}\left(\Fp^\vee_{\leq k} : H^*(\Jbar_C^\vee, \BQ) \to H^*(\Jbar_C^\vee, \BQ)\right).
\end{gather*}

\item[(ii)] The perverse filtration $P_\bullet H^*(\Jbar_C^d, \BQ)$ is multiplicative with respect to the cup-product, \emph{i.e.}, for integers $k, l$ we have
\[
\cup: P_k H^*(\overline{J}^d_C, \BQ) \otimes P_l H^*(\overline{J}^d_C, \BQ) \to  P_{k + l} H^*(\overline{J}^d_C, \BQ).
\]
In particular, the associated graded
\begin{equation} \label{eq:defBH}
\BH^d:= \bigoplus_{k,m} \mathrm{Gr}^P_kH^m(\overline{J}^d_C, \BQ)
\end{equation}
inherits a cup-product
\begin{equation*} \label{eq:cupbarglobal}
\overline{\cup}: \BH^d \otimes \BH^d \to \BH^d
\end{equation*}
making it a bigraded $H^*(B, \BQ)$-algebra. By definition, elements in $H^m(B, \BQ)$ have bigrading $(0, m)$.

\item[(iii)] There is a commutative diagram
\[
\begin{tikzcd}
H^m(\Jbar^\vee_C, \BQ) \otimes H^n(\Jbar^\vee_C, \BQ) \arrow[r, "*^d"] \arrow[d, "\FF^d \otimes \FF^d"] & H^{\geq m + n - 2g}(\Jbar^\vee_C, \BQ) \arrow[d, "\FF^d"] \\
H^*(\Jbar^d_C, \BQ) \otimes H^*(\Jbar^d_C, \BQ) \arrow[r, "\cup"] & H^*(\Jbar^d_C, \BQ).
\end{tikzcd}
\]
Moreover, the reduced convolution product
\[
*_{\mathrm{red}}: H^m(\Jbar^\vee_C, \BQ) \otimes H^n(\Jbar^\vee_C, \BQ) \to H^{m + n - 2g}(\Jbar^\vee_C, \BQ)
\]
is independent of $d$.
\end{enumerate}

\subsection{Degree-independence}

Our main theorem for compactified Jacobian families of integral locally planar curves is the following.

\begin{thm} \label{thm:mainint}
Let $\pi_d: \Jbar^d_C \to B$ and $\pi^\vee: \Jbar_C^\vee \to B$ be as in Sections \ref{sec:2.1} and \ref{sec:2.2}.
\begin{enumerate}
\item[(i)] (Fourier-stability) For integers $k, l$, the perverse truncation
\[
\FF^d_l: {^\Fp}\tau_{\leq k + \dim B}(\pi_*^\vee\BQ_{\Jbar_C^\vee}) \to \pi_{d*}\BQ_{\Jbar^d_C}[2l - 2g]
\]
is zero if $l < 2g - k$, and factors through
\[
\FF^d_l: {^\Fp}\tau_{\leq k + \dim B}(\pi_*^\vee\BQ_{\Jbar_C^\vee}) \to {^\Fp}\tau_{\leq l + \dim B}(\pi_{d*}\BQ_{\Jbar^d_C})[2l - 2g]
\]
if $l \geq 2g - k$. Similarly, the perverse truncation
\[
(\FF^d)^{-1}_l: {^\Fp}\tau_{\leq k + \dim B}(\pi_{d*}\BQ_{\Jbar_C^d}) \to \pi_*^\vee\BQ_{\Jbar^\vee_C}[2l - 2g]
\]
is zero if $l < 2g - k$, and factors through
\[
(\FF^d)^{-1}_l: {^\Fp}\tau_{\leq k + \dim B}(\pi_{d*}\BQ_{\Jbar_C^d}) \to {^\Fp}\tau_{\leq l + \dim B}(\pi_*^\vee\BQ_{\Jbar^\vee_C})[2l - 2g]
\]
if $l \geq 2g - k$. For $0 \leq k \leq 2g$, $\FF^d_k$ and $(\FF^d)^{-1}_{2g - k}$ induce mutually inverse isomorphisms
\begin{equation} \label{eq:isomH}
\CH^\vee_{(2g - k)}[2g - 2k] \xrightleftharpoons[\overline{(\FF^d)_{2g - k}^{-1}}]{\overline{\FF^d_k}} \CH^d_{(k)}.
\end{equation}

\item[(ii)] (Multiplicativity for $*_{\mathrm{red}}$) For integers $k, l$, the perverse truncation
\[
*_{\mathrm{red}}: {^\Fp}\tau_{\leq k + \dim B}(\pi_*^\vee\BQ_{\Jbar_C^\vee})  \otimes {^\Fp}\tau_{\leq l + \dim B}(\pi_*^\vee\BQ_{\Jbar_C^\vee}) \to \pi_*^\vee\BQ_{\Jbar_C^\vee}[-2g],
\]
of the reduced convolution product in \eqref{eq:trunconv} factors through
\[
*_{\mathrm{red}}: {^\Fp}\tau_{\leq k + \dim B}(\pi_*^\vee\BQ_{\Jbar_C^\vee})  \otimes {^\Fp}\tau_{\leq l + \dim B}(\pi_*^\vee\BQ_{\Jbar_C^\vee}) \to {^\Fp}\tau_{\leq k + l -2g + \dim B}(\pi_*^\vee\BQ_{\Jbar_C^\vee})[-2g],
\]
In particular, the reduced convolution product descends to a collection of morphisms
\begin{equation} \label{eq:trunconvbarkl}
\overline{*}_{\mathrm{red}}: \CH^\vee_{(k)} \otimes \CH^\vee_{(l)} \to \CH^\vee_{(k + l - 2g)}[-2g].
\end{equation}

\item[(iii)] (Degree-independence) The graded ring object $\CH^d \in D^b_c(B)$ in \eqref{eq:defCH} is completely determined by the morphisms $\overline{*}_{\mathrm{red}}$. In particular, for integers $d, d'$, there is an isomorphism
\[
\CH^d \simeq \CH^{d'} \in D^b_c(B)
\]
of graded ring objects.
\end{enumerate}
\end{thm}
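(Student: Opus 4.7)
The plan is to establish parts (i)--(iii) in sequence: part (i) by direct idempotent calculations with the half-Fourier vanishing; part (ii) by Fourier-transporting the cup-product multiplicativity of Theorem \ref{thm:mult}; and part (iii) by combining these with the degree-independence of the reduced convolution class (Proposition \ref{prop:trunC}).

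For (i), the restriction--projection of $\FF^d_l$ to ${^\Fp}\tau_{\leq k + \dim B}(\pi_*^\vee\BQ_{\Jbar_C^\vee})$ is, by Theorem \ref{thm:realize}, computed by the composition $\FF^d_l \circ \Fp^\vee_{\leq k}$. Expanding
\[
\FF^d_l \circ \Fp^\vee_{\leq k} = \sum_{i \leq k} \FF^d_l \circ (\FF^d)^{-1}_i \circ \FF^d_{2g - i},
\]
each term vanishes by ($\frac{1}{2}$FV2) whenever $l + i < 2g$, which covers all $i \leq k$ precisely when $l < 2g - k$. For the factoring when $l \geq 2g - k$, a parallel expansion of $\Fq^d_{\geq l + 1} \circ \FF^d_l$ combined with ($\frac{1}{2}$FV1) forces the composition to vanish, so $\FF^d_l$ factors through ${^\Fp}\tau_{\leq l + \dim B}(\pi_{d*}\BQ_{\Jbar_C^d})$. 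The symmetric statements for $(\FF^d)^{-1}_l$ are obtained by swapping roles. The mutually inverse isomorphisms \eqref{eq:isomH} then follow from the identity
\[
\FF^d_k \circ (\FF^d)^{-1}_{2g - k} = \Fp^d_{\leq k} - \Fp^d_{\leq k - 1},
\]
which induces the identity on the graded piece $\CH^d_{(k)}$ by semi-orthogonality, together with the analogous identity for the reverse composition on $\CH^\vee_{(2g - k)}$.

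For (ii), the strategy is to use the commutative diagram \eqref{eq:commconv}, which interchanges the cup product on $\pi_{d*}\BQ_{\Jbar_C^d}$ with the full convolution product $*^d$ on $\pi_*^\vee\BQ_{\Jbar_C^\vee}$. Combining Theorem \ref{thm:mult} (cup multiplicativity) with the Fourier-stability from part (i) translates into a perverse-truncation statement for $*^d$ in which each codimensional piece of $\FC^d$ respects its own truncation degree. Isolating the leading codimension-$g$ component $\FC_{\mathrm{red}}$, which via the cycle class map produces the shift-$[-2g]$ morphism $*_{\mathrm{red}}$, yields precisely the desired factoring of the perverse truncation of $*_{\mathrm{red}}$ through ${^\Fp}\tau_{\leq k + l - 2g + \dim B}(\pi_*^\vee\BQ_{\Jbar_C^\vee})[-2g]$, hence the induced morphisms \eqref{eq:trunconvbarkl}. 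I expect this shift-and-truncation bookkeeping, reconciling the mixed-degree Fourier transforms with the full $*^d$ expansion and extracting its leading piece, to be the main technical obstacle.

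Finally, for (iii), part (i) assembles the maps $\overline{\FF^d_k}$ and $\overline{(\FF^d)^{-1}_{2g - k}}$ into a degree-shifting isomorphism
\[
\CH^d \simeq \bigoplus_{k = 0}^{2g} \CH^\vee_{(2g - k)}[2g - 2k]
\]
of graded objects in $D^b_c(B)$. Under \eqref{eq:commconv} and part (ii), this isomorphism identifies the cup-product $\overline{\cup}$ on $\CH^d$ with the reduced convolution product $\overline{*}_{\mathrm{red}}$ on the right-hand side. Since Proposition \ref{prop:trunC} asserts that $\FC_{\mathrm{red}}$, and hence $\overline{*}_{\mathrm{red}}$, is independent of $d$, the graded ring object $\CH^d$ is completely determined by the intrinsic data $(\CH^\vee_{(\bullet)}, \overline{*}_{\mathrm{red}})$; in particular $\CH^d \simeq \CH^{d'}$ as graded ring objects in $D^b_c(B)$ for all integers $d, d'$.
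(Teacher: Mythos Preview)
Your proposal is correct and follows essentially the same architecture as the paper: part (i) via the idempotent expansions and ($\frac{1}{2}$FV1)/($\frac{1}{2}$FV2), part (ii) via the convolution--cup intertwining, and part (iii) by transporting $\overline{\cup}$ to $\overline{*}_{\mathrm{red}}$ and invoking Proposition \ref{prop:trunC}. The only notable route difference is in (ii): you derive multiplicativity of $*_{\mathrm{red}}$ by combining Theorem \ref{thm:mult} with the Fourier-stability of (i), whereas the paper bypasses Theorem \ref{thm:mult} entirely and works directly with the expansion $*_{\mathrm{red}} = \sum_{i,j}(\FF^d)^{-1}_{2g-i-j}\circ\cup\circ(\FF^d_i\otimes\FF^d_j)$ and two applications of ($\frac{1}{2}$FV2). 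Both work; the paper's route is slightly more self-contained.

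One point deserves more care than you give it. In (iii) you assert that ``under \eqref{eq:commconv} and part (ii)'' the Fourier isomorphism identifies $\overline{\cup}$ with $\overline{*}_{\mathrm{red}}$, but \eqref{eq:commconv} relates $\cup$ to the \emph{full} $*^d$, not to $*_{\mathrm{red}}$ alone, and part (ii) as stated says nothing about the higher pieces $*^d_c$ for $c>g$. The paper handles this by an explicit expansion (their \eqref{eq:expandcommute}): writing $\cup\circ(\Fp^d_{\leq k}\otimes\Fp^d_{\leq l}) = \sum_{c\geq g}\sum_{i,j}\FF^d_{5g-i-j-c}\circ *^d_c\circ((\FF^d)^{-1}_i\otimes(\FF^d)^{-1}_j)$ and showing that every term with $c>g$ factors through ${^\Fp}\tau_{\leq k+l-1}$, hence vanishes on the graded piece $\CH^d_{(k+l)}$. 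Your framework does contain the needed ingredient---the observation that each $*^d_c$ ``respects its own truncation degree''---so the argument goes through once you track the shifts: the $c>g$ terms land in strictly lower perversity after composing with $\FF^d$, leaving only $\overline{\FF^d_{k+l}}\circ\overline{*}_{\mathrm{red}}$. You should spell this out rather than fold it into the ``bookkeeping'' of (ii); it is the actual content of (iii).
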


\begin{proof}
Essentially all three parts are consequences of the Fourier vanishing. We begin with part (i). In view of the realization Theorem \ref{thm:realize}, for the first statement of (i) it suffices to show
\begin{gather}
\FF^d_l \circ \Fp^\vee_{\leq k} = 0, \quad l < 2g - k, \label{eq:fstable} \\
\Fq^d_{\geq l + 1} \circ \FF^d_l = 0, \label{eq:fstable2}
\end{gather}
where $\Fq^d_{\geq l + 1}$ is defined in \eqref{eq:defq}. Expanding the left-hand side of both equations
\begin{gather*}
\FF^d_l \circ \Fp^\vee_{\leq k} = \sum_{i \leq k} \FF^d_l \circ (\FF^d)^{-1}_i \circ \FF^d_{2g - i}, \\
\Fq^d_{\geq l + 1} \circ \FF^d_l = \sum_{i \geq l + 1} \FF^d_i \circ (\FF^d)^{-1}_{2g - i} \circ \FF^d_l,
\end{gather*}
we see that \eqref{eq:fstable} (resp.~\eqref{eq:fstable2}) follows directly from (FV2) (resp.~(FV1)). The second statement of (i) is parallel.

To prove \eqref{eq:isomH} we first observe that
\[
(\FF^d)^{-1}_{2g - k} \circ \Fp_{\leq k - 1}^d = 0;
\]
hence $(\FF^d)^{-1}_{2g - k}$ descends to a well-defined morphism
\[
\overline{(\FF^d)^{-1}_{2g - k}}: \CH^d_{(k)} \to \CH^\vee_{(2g - k)}[2g - 2k].
\]
Similarly, $\FF^d_k$ descends to a well-defined morphism
\[
\overline{\FF^d_k}: \CH^\vee_{(2g - k)}[2g - 2k] \to \CH^d_{(k)}.
\]
To show that they are inverse to each other, we consider the identities
\begin{align}
\Fp_{\leq k}^d & = \FF^d \circ (\FF^d)^{-1} \circ \Fp_{\leq k}^d = \sum_{i \geq 2g - k} \FF^d_{2g - i} \circ (\FF^d)^{-1}_i \circ \Fp_{\leq k}^d \nonumber \\
& = \FF^d_k \circ (\FF^d)^{-1}_{2g - k} \circ \Fp_{\leq k}^d + \sum_{i > 2g - k} \FF^d_{2g - i} \circ (\FF^d)^{-1}_i \circ \Fp_{\leq k}^d \label{eq:expand}
\end{align}
where the second identity holds both by (FV1) and for dimension reasons. Now the condition~$i > 2g - k$
translates to $2g - i < k$, which implies that the entire second term in \eqref{eq:expand} factors through
\[
{^\Fp}\tau_{\leq k + \dim B}(\pi_{d*}\BQ_{\Jbar_C^d}) \to {^\Fp}\tau_{\leq k - 1 + \dim B}(\pi_{d*}\BQ_{\Jbar_C^d}).
\]
Therefore we have
\[
\mathrm{id} = \overline{\FF^d_k} \circ \overline{(\FF^d)^{-1}_{2g - k}}: \CH^d_{(k)} \to \CH^d_{(k)}.
\]
The other direction is parallel. This proves (i).

For (ii) we consider via \eqref{eq:commconv} the identity
\[
*^d = (\FF^d)^{-1} \circ \cup \circ (\FF^d \otimes \FF^d).
\]
For dimension reasons we then have
\begin{equation} \label{eq:convexpand}
*_{\mathrm{red}} = \sum_i\sum_j(\FF^d)^{-1}_{2g - i - j} \circ \cup \circ (\FF^d_i \otimes \FF^d_j).
\end{equation}
Again, in view of the realization Theorem \ref{thm:realize}, to prove (ii) it suffices to show
\[
\Fq^\vee_{\geq k + l + 1 - 2g} \circ *_{\mathrm{red}} \circ (\Fp^\vee_{\leq k} \otimes \Fp^\vee_{\leq l}) = 0
\]
where $\Fq^\vee_{\geq k + l + 1 - 2g}$ is defined in \eqref{eq:defq2}. We expand $*_{\mathrm{red}}$ using \eqref{eq:convexpand}, and find
\begin{align*}
& \Fq^\vee_{\geq k + l + 1 - 2g} \circ *_{\mathrm{red}} \circ (\Fp^\vee_{\leq k} \otimes \Fp^\vee_{\leq l}) \\
{}= & \sum_i\sum_j \Fq^\vee_{\geq k + l + 1 - 2g} \circ (\FF^d)^{-1}_{2g - i - j} \circ \cup \circ \left((\FF^d_i \circ \Fp^\vee_{\leq k}) \otimes (\FF^d_j \circ \Fp^\vee_{\leq l})\right) \\
{}= & \sum_{i \geq 2g - k}\sum_{j \geq 2g - l} \Fq^\vee_{\geq k + l + 1 - 2g} \circ (\FF^d)^{-1}_{2g - i - j} \circ \cup \circ \left((\FF^d_i \circ \Fp^\vee_{\leq k}) \otimes (\FF^d_j \circ \Fp^\vee_{\leq l})\right)
\end{align*}
where the second identity uses (FV2). This time the conditions $i \geq 2g - k$ and $j \geq 2g - l$ translate~to
\[
2g - i - j \leq k + l - 2g.
\]
But then we have
\[
\Fq^\vee_{\geq k + l + 1 - 2g} \circ (\FF^d)^{-1}_{2g - i - j} = 0
\]
for such $i, j, k, l$ by (FV2). This proves (ii).

For (iii) we consider the following diagram
\begin{equation} \label{eq:diagram}
\begin{tikzcd}
\CH^\vee_{(2g - k)}[2g - 2k] \otimes \CH^\vee_{(2g - l)}[2g - 2l] \arrow[r, "\overline{*}_{\mathrm{red}}"] \arrow[d, shift left=1pt, harpoon, "\overline{\FF^d_k} \otimes \overline{\FF^d_l}"] & \CH^\vee_{(2g - k - l)}[2g - 2k - 2l] \arrow[d, shift left=1pt, harpoon, "\overline{\FF^d_{k + l}}"] \\
\CH^d_{(k)} \otimes \CH^d_{(l)} \arrow[r, "\overline{\cup}"] \arrow[u, shift left=1pt, harpoon, "\overline{(\FF^d)_{2g - k}^{-1}} \otimes \overline{(\FF^d)_{2g - l}^{-1}}"] & \CH^d_{(k + l)}, \arrow[u, shift left=1pt, harpoon, "\overline{(\FF^d)_{2g - k - l}^{-1}}"]
\end{tikzcd}
\end{equation}
where the top and bottom rows are given by \eqref{eq:trunconvbarkl} and \eqref{eq:cupbarkl}, respectively, and the vertical arrows are isomorphisms by \eqref{eq:isomH}. We claim that the diagram commutes. Indeed, 
decomposing
\[
*^d = \sum_{c \geq g} *^d_c, \quad *^d_c: \pi^\vee_{*}\BQ_{J^\vee_C} \otimes \pi^\vee_{*}\BQ_{J^\vee_C} \to \pi^\vee_{*}\BQ_{J^\vee_C}[{2c - 4g}]
\]
according to the codimension, we have $*^d_g = *_{\mathrm{red}}$. There are identities
\begin{align}
\cup \circ (\Fp^d_{\leq k} \otimes \Fp^d_{\leq l}) & = \FF^d \circ *^d \circ \left(((\FF^d)^{-1} \circ \Fp^d_{\leq k}) \otimes ((\FF^d)^{-1} \circ \Fp^d_{\leq l})\right) \nonumber\\
& = \sum_{c \geq g}\sum_{i \geq 2g - k}\sum_{j \geq 2g - l} \FF^d_{5g - i - j - c} \circ *^d_c \circ \left(((\FF^d)^{-1}_i \circ \Fp^d_{\leq k}) \otimes ((\FF^d)^{-1}_j \circ \Fp^d_{\leq l})\right) \label{eq:expandcommute}
\end{align}
where the second identity uses both (FV1) and dimension constraints. We observe that \eqref{eq:expandcommute} contains the term
\[
\FF^d_{k + l} \circ *_{\mathrm{red}} \circ \left(((\FF^d)_{2g - k}^{-1} \circ \Fp^d_{\leq k}) \otimes ((\FF^d)_{2g - l}^{-1} \circ \Fp^d_{\leq l})\right)
\]
and all the other terms satisfy
\[
5g - i - j - c < k + l.
\]
By (i), this means that all the other terms in \eqref{eq:expandcommute} factor through
\[
{^\Fp}\tau_{\leq k + \dim B}(\pi_{d*}\BQ_{\Jbar_C^d}) \otimes {^\Fp}\tau_{\leq l + \dim B}(\pi_{d*}\BQ_{\Jbar_C^d}) \to {^\Fp}\tau_{\leq k + l - 1 + \dim B}(\pi_{d*}\BQ_{\Jbar_C^d})
\]
and hence descend to zero. This shows the commutativity of the diagram \eqref{eq:diagram}.

Finally, for any integer $d$, we conclude from (i) that there are mutually inverse graded isomorphisms
\begin{equation}\label{H_tilde}
\widetilde{\CH}^\vee:=\bigoplus_{k = 0}^{2g}\CH^\vee_{(2g - k)}[2g - 2k] \xrightleftharpoons[\oplus \overline{(\FF^d)_{2g - k}^{-1}}]{\oplus \overline{\FF^d_k}} \bigoplus_{k = 0}^{2g}\CH^d_{(k)} = \CH^d.
\end{equation}
Moreover, by the commutative diagram \eqref{eq:diagram}, the above isomorphisms are compatible with the reduced convolution product $\overline{*}_{\mathrm{red}}$ on the left-hand side, and the cup-product $\overline{\cup}$ on the right-hand side. This proves (iii) since by Proposition \ref{prop:trunC}, the product $\overline{*}_{\mathrm{red}}$ on the left-hand side is independent of $d$. The proof of Theorem \ref{thm:mainint} is now complete.
\end{proof}

We deduce immediately the $d$-independence of the associated graded $\BH^d$ in \eqref{eq:defBH} by taking global cohomology.

\begin{cor} \label{cor:mainint}
For integers $d, d'$, there is an isomorphism
\[
\BH^d \simeq \BH^{d'}
\]
of bigraded $H^*(B, \BQ)$-algebras.
\end{cor}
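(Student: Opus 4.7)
The plan is to deduce Corollary \ref{cor:mainint} from Theorem \ref{thm:mainint}(iii) by applying the derived global sections functor $H^*(B, -)$. Under the canonical identification $\Gr_k^P H^m(\Jbar^d_C, \BQ) = H^m(B, \CH^d_{(k)})$ recalled in Section \ref{sec:2.1}, the bigraded vector space $\BH^d$ is exactly $H^*(B, \CH^d)$, where the first grading is read off from the decomposition $\CH^d = \bigoplus_{k = 0}^{2g} \CH^d_{(k)}$ and the second from the cohomological degree on $B$. Since the isomorphism $\CH^d \simeq \CH^{d'}$ of graded ring objects supplied by Theorem \ref{thm:mainint}(iii) restricts to summand-wise isomorphisms $\CH^d_{(k)} \simeq \CH^{d'}_{(k)}$ for each $k$, applying $H^*(B, -)$ immediately yields a bigraded $\BQ$-vector space isomorphism $\BH^d \simeq \BH^{d'}$.

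Next I would verify ring compatibility. By definition, the product $\overline{\cup}$ on $\BH^d = H^*(B, \CH^d)$ is the composition of the exterior cup-product $H^*(B, \CH^d) \otimes H^*(B, \CH^d) \to H^*(B, \CH^d \otimes \CH^d)$ with the map induced by the sheaf-theoretic morphism $\overline{\cup}: \CH^d \otimes \CH^d \to \CH^d$ of \eqref{eq:cupbar}. Since the isomorphism of ring objects intertwines the morphisms $\overline{\cup}$ on the two sides, applying $H^*(B, -)$ yields an isomorphism of bigraded rings.

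The last ingredient is $H^*(B, \BQ)$-linearity. Since the algebra structure on $\BH^d$ is given by $\alpha \cdot \beta = \pi_d^*(\alpha) \overline{\cup} \beta$, it suffices to show that the isomorphism sends $\pi_d^*(\alpha)$ to $\pi_{d'}^*(\alpha)$ for $\alpha \in H^*(B, \BQ)$. Tracing the explicit formula \eqref{H_tilde}, the isomorphism is built from the Fourier transforms $\overline{\FF^d_k}$ and $\overline{(\FF^d)^{-1}_{2g - k}}$, which are induced by Chow classes on the relative products $\Jbar_C^\vee \times_B \Jbar^d_C$ and are therefore $H^*(B, \BQ)$-linear; in particular, restricted to $k = 0$ they preserve the unit morphism $\BQ_B \to \CH^d_{(0)}$ through which $\pi_d^*$ factors. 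This last point is the only mild subtlety in the argument, and it is immediate from the relative nature of the Fourier construction; no input beyond Theorem \ref{thm:mainint}(iii) is needed.
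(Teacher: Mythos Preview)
Your proposal is correct and follows exactly the approach the paper indicates: the paper's entire proof is the single sentence preceding the corollary, ``We deduce immediately the $d$-independence of the associated graded $\BH^d$ \ldots\ by taking global cohomology,'' and you have simply spelled out what that sentence means. Your verification of ring compatibility and $H^*(B,\BQ)$-linearity is correct (the latter holds automatically because all the maps involved are morphisms in $D^b_c(B)$, hence induce $H^*(B,\BQ)$-linear maps on hypercohomology).
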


\begin{rmk} \label{rem:chow}
It is natural to ask if Corollary \ref{cor:mainint} also holds at the Chow level. The question is whether there is a \emph{canonically defined} Chow-theoretic perverse filtration $P_\bullet\Chow^*(\Jbar_C^\vee)$, which in particular satisfies
\[
P_k\Chow^*(\Jbar_C^\vee) = \mathrm{Im}\left(\Fp^\vee_{\leq k}: \Chow^*(\Jbar_C^\vee) \to \Chow^*(\Jbar_C^\vee)\right)
\]
regardless of the degree $d$. The answer is expected to be yes assuming certain Bloch--Beilinson type motivic conjectures in the relative setting.
\end{rmk}

\section{Fine compactified Jacobians} \label{sec:3}

As we see in Section \ref{sec:2}, the main geometric ingredients in treating the associated graded with respect to the perverse filtration are 
\begin{enumerate}
    \item[(i)] Ng\^o's full support theorem;
    \item[(ii)] a theory of Fourier transforms.
\end{enumerate}
In the first part of this section we describe a general framework handling the cohomology ring of a dualizable abelian fibration, extending \cite{MSY}. Then in the second part we apply this to fine compactified Jacobians of reduced locally planar curves both in the global and the local settings, and prove Theorems \ref{thm0.7} and \ref{thm0.8} respectively.

\begin{rmk}
    Strictly speaking, the cases discussed in Section \ref{sec:2} fall outside the framework of dualizable abelian fibrations. Instead, they can be viewed as ``twisted'' variants. Because these sections focus on degree independence, where normalizations and gerbes play a crucial role, we address them explicitly and do not incorporate them into the general framework. This clarifies the main ideas while also helping to avoid overly heavy notation in the framework.
\end{rmk}

\subsection{Dualizable abelian fibrations} We recall the notion of dualizable abelian fibration following \cite{MSY}.

We say that $\pi: M \to B$ is an \emph{abelian fibration} if both $M$ and $B$ are nonsingular varieties, $\pi$ is flat and proper, and the restriction of $M$ to a certain nonempty open subset
\[
\pi_U: M_U \to U \subset B 
\]
is an abelian scheme. 

We say that $\pi^\vee: M^\vee \to B$ is dual to the abelian fibration $\pi: M\to B$, if $\pi^\vee: M^\vee \to B$ is an abelian fibration and there exists an open subset $U\subset B$ over which $\pi$ and $\pi^\vee$ form dual abelian schemes. We say that 
\[
{\CP} \in D^b\mathrm{Coh}(M^\vee\times_B M)
\]
is a \emph{normalized Poincar\'e complex}, if the restriction of $\CP$ to $M_U^\vee \times_U M_U$ recovers the normalized Poincar\'e line bundle $\CL$. Here $M_U, M_U^\vee$ are dual abelian schemes over some open $U\subset B$.

\begin{defn}[Dualizable abelian fibration \cite{MSY}]\label{def:DAB}
A dualizable abelian fibration consists of the following data
\[
(M, B, M^\vee, \CP, \CK).
\]
Here $\pi: M \to B$ is an abelian fibration of relative dimension $g$ with a dual abelian fibration $\pi^\vee: M^\vee \to B$ satisfying the following.

\begin{enumerate}
    \item[(a1)] (Poincar\'e) There is a normalized Poincar\'e complex~${\CP} \in D^b\mathrm{Coh}(M^\vee \times_B M)$ which admits an inverse ${\CP}^{-1} \in D^b\mathrm{Coh}(M \times_B M^\vee)$, \emph{i.e.},
    \begin{equation*} \label{eq:dual}
    {\CP}^{-1} \circ {\CP} \simeq \CO_{\Delta_{M^\vee/B}}, \quad   {\CP}\circ \CP^{-1} \simeq \CO_{\Delta_{M/B}}.
    \end{equation*}
    \item[(a2)] (Convolution) There is an object $\CK \in D^b\mathrm{Coh}(M^\vee \times_B M^\vee \times_B M^\vee)$ supported in codimension $\geq g$, which satisfies
    \begin{equation}\label{convolution}
    \CP \circ \CK \simeq \CO_{\Delta^\mathrm{sm}_{M/B}}\circ ({\CP} \boxtimes {\CP} ).
    \end{equation}
    Here $\Delta^\mathrm{sm}_{M/B} \subset M\times_B M \times_B M$ is the small relative diagonal, and $\CO_{\Delta^\mathrm{sm}_{M/B}}\circ ({\CP} \boxtimes {\CP} )$ stands for composing $\CP$ with $\CO_{\Delta^\mathrm{sm}_{M/B}}$ via the first (resp.~second) factor of \mbox{$M \times_B M \times_B M$}. We refer to $\CK$ as the \emph{convolution kernel}.
    \item[(b)] (Support) Both morphisms $\pi, \pi^\vee$ have full support, \emph{i.e.}, every simple perverse sheaf that appears in the pushforward complex $\pi_*\BQ_M$ or $\pi^\vee_*\BQ_{M^\vee}$ has support $B$.
\end{enumerate}
\end{defn}

We denote by $\mathrm{FM}_\CP: D^b\mathrm{Coh}(M^\vee)\to D^b\mathrm{Coh}(M)$ the Fourier--Mukai transform associated with $\CP$. Then the object $\CK$ in (a2) induces a convolution product 
\[
\ast: D^b\mathrm{Coh}(M^\vee) \times D^b\mathrm{Coh}(M^\vee) \to D^b\mathrm{Coh}(M^\vee), \quad F \ast G = p_{3*}( p_1^*F \otimes p_2^* G \otimes \CK)
\]
where the $p_i: M^\vee \times_B M^\vee \times_B M^\vee \to M^\vee$ are the three projections. We have by definition
\begin{equation*}\label{FM_Conv}
\mathrm{FM}_{\CP}(F \ast G) \simeq \mathrm{FM}_{\CP}(F) \otimes\mathrm{FM}_{\CP}(G) ,\quad F, G \in D^b\mathrm{Coh}(M^\vee).
\end{equation*}

\begin{rmk}
We refer to \cite[Section 1]{MSY} for more details and explanations on dualizable abelian fibrations. Note that there is a minor difference between the definition above and the one in~\cite{MSY}. The condition (b) in \cite[Definition 1.2]{MSY} only requires that $\pi$ has full support; here for our purpose we need both maps $\pi, \pi^\vee$ to have full support in order to relate the projection operators on each side with the perverse filtration.
\end{rmk}

Following \cite[Section 2.4]{MSY} (see also Section \ref{sec:2.2}), the Poincar\'e complex $\CP$ and its inverse~$\CP^{-1}$ induce the Chow-theoretic Fourier transform
\[
\FF = \sum_i\FF_i \in \Chow_*(M^\vee \times_B M), \quad \FF_i \in \Chow_{2g - i + \dim B}(M^\vee \times_B M)
\]
and its inverse
\[
\FF^{-1} = \sum_i\FF^{-1}_i \in \Chow_*(M \times_B M^\vee), \quad \FF^{-1}_i \in \Chow_{2g - i + \dim B}(M \times_B M^\vee);
\]
they also act sheaf-theoretically as in Section \ref{sec:2.2}.

\subsection{Cup-product, Fourier vanishing, and convolution}

We define the shifted perverse sheaves $\CH_{(k)}$ associated with the decomposition of the pushforward complex $\pi_*{\BQ_M}$ as in (\ref{eq:BBD}); a key observation of \cite{MSY} is that the compatibility of the perverse truncation functors and the cup-product (a.k.a.~multiplicitivity) for dualizable abelian fibrations is governed by certain Fourier vanishing results.

\begin{defn} [Fourier vanishing] 
The conditions (FV1) and (FV2) refer to the following statements: for all $i + j < 2g$, we have
\begin{gather*}
\tag{FV1} \FF^{-1}_j\circ \FF_i = 0 \in \Chow_{3g - i - j + \dim B}(M^\vee \times_B M^\vee), \\
\tag{FV2} \FF_j \circ \FF^{-1}_i = 0 \in \Chow_{3g - i - j + \dim B}(M \times_B M).
\end{gather*}
\end{defn}

By \cite[Theorem 2.6(iii)]{MSY}, if the dualizable abelian fibration satisfies (FV1), then the multiplicitivity holds for the perverse truncation functors. In particular, we obtain the following.

\begin{thm}\label{thm3.4}
    Assume that the dualizable abelian fibration
    \[
    (M, B, M^\vee, \CP, \CK)
    \]
    satisfies \textup{(FV1)}. Then the cup-product on $\pi_*\BQ_M$ induces a cup-product on the associated graded
    \[ 
\overline{\cup}: \CH_M \otimes \CH_M \to \CH_M, \quad \CH_M := \bigoplus_{k = 0}^{2g} \CH_{(k)} \in D^b_c(B).\]
In particular, we may view $\CH_M \in D^b_c(B)$ as a graded ring object endowed with $\overline{\cup}$.\end{thm}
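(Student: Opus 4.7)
The plan is to follow the strategy of \cite[Corollary 4.6]{MSY}, which established an analogous multiplicativity statement for universal compactified Jacobians of integral curves, and to observe that every step there uses only the axioms of a dualizable abelian fibration together with the hypothesis ($\frac{1}{2}$FV1). Concretely, the goal is to prove the sheaf-theoretic multiplicativity
\[
\cup: {^\Fp}\tau_{\leq k + \dim B}(\pi_*\BQ_M) \otimes {^\Fp}\tau_{\leq l + \dim B}(\pi_*\BQ_M) \to {^\Fp}\tau_{\leq k + l + \dim B}(\pi_*\BQ_M)
\]
for all $k, l$. Once this is in hand, the cup-product descends via the distinguished triangles \eqref{eq:disttri} to morphisms $\overline{\cup}: \CH_{(k)} \otimes \CH_{(l)} \to \CH_{(k + l)}$, and thus to the graded ring structure on $\CH_M$ asserted by the theorem.

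First, I would introduce the Chow self-correspondences
\[
\Fp_{\leq k} := \sum_{i \leq k}\FF_i \circ \FF^{-1}_{2g - i} \in \Chow_{g + \dim B}(M \times_B M), \quad 0 \leq k \leq 2g,
\]
and their sheaf-theoretic images on $\pi_*\BQ_M$. The hypothesis ($\frac{1}{2}$FV1) directly yields the semi-orthogonality $\Fp_{\leq l} \circ \Fp_{\leq k} = \Fp_{\leq k}$ for $k \leq l$. Using the full support condition (b) on $\pi$ together with the pseudo-abelianness of $D^b_c(B)$, I would then reproduce the analogue of Theorem \ref{thm:realize}: the inclusion $\mathrm{Im}(\Fp_{\leq k}) \hookrightarrow \pi_*\BQ_M$ realizes the perverse truncation ${^\Fp}\tau_{\leq k + \dim B}(\pi_*\BQ_M)$ and splits the corresponding distinguished triangle. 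The argument of \cite[Corollary 4.6(ii)]{MSY} uses only these inputs and transfers verbatim.

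Next, from axioms (a1) and (a2) I would form the convolution class
\[
\FC := \FF^{-1} \circ [\Delta^{\mathrm{sm}}_{M/B}] \circ (\FF \times \FF) \in \Chow_*(M^\vee \times_B M^\vee \times_B M^\vee).
\]
The codimension $\geq g$ support of the convolution kernel $\CK$ from (a2), combined with the compatibility \eqref{convolution} and Riemann--Roch as in \cite[Proposition 3.2]{MSY}, yields the dimension bound $\FC \in \Chow_{\leq 2g + \dim B}$. Passing to cohomology, this class induces a convolution product
\[
*: \pi^\vee_*\BQ_{M^\vee} \otimes \pi^\vee_*\BQ_{M^\vee} \to \pi^\vee_*\BQ_{M^\vee}[\geq \!{-2g}]
\]
which is Fourier dual to the cup-product, in the sense that the analogue of \eqref{eq:commconv} commutes. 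To prove multiplicativity, I would rewrite
\[
\cup \circ (\Fp_{\leq k} \otimes \Fp_{\leq l}) = \FF \circ * \circ \bigl((\FF^{-1} \circ \Fp_{\leq k}) \otimes (\FF^{-1} \circ \Fp_{\leq l})\bigr),
\]
expand in Chow degrees, and apply ($\frac{1}{2}$FV1) to restrict surviving $\FF^{-1}_m$-contributions to $m \geq 2g - k$ (resp.~$\geq 2g - l$); the dimension bound on $\FC$ restricts $*$ to components in codimensions $\geq g$. A direct degree count then forces the output $\FF_n$-term to satisfy $n \leq k + l$, which via the realization step means the composition factors through ${^\Fp}\tau_{\leq k + l + \dim B}(\pi_*\BQ_M)$, as required.

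The main obstacle will be the realization step, which upgrades the semi-orthogonal idempotents $\Fp_{\leq k}$ from merely formal projectors in $D^b_c(B)$ to honest truncation projectors. This is where the full support hypothesis (b) is genuinely used: if $\pi_*\BQ_M$ contained a simple perverse summand with strictly smaller support, there would be no reason for the Fourier-theoretic idempotents to align with the perverse truncation along that summand, and $\mathrm{Im}(\Fp_{\leq k})$ could differ from ${^\Fp}\tau_{\leq k + \dim B}(\pi_*\BQ_M)$. Once this identification is secured, the multiplicativity reduces to the dimension bound on $\FC$ together with ($\frac{1}{2}$FV1), and the descent to the associated graded $\CH_M$ is formal.
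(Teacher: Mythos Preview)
Your proposal is correct and follows essentially the same approach as the paper: the paper simply cites \cite[Theorem 2.6(iii)]{MSY} for the multiplicativity of the perverse filtration under ($\frac{1}{2}$FV1), and you have unpacked precisely that argument---realization of the perverse truncations via the semi-orthogonal idempotents $\Fp_{\leq k}$ (using full support), the dimension bound on the convolution class coming from axiom (a2), and the degree count forcing $n \leq k+l$ on the output $\FF_n$. Your identification of the realization step as the place where full support genuinely enters is also accurate.
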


Theorem \ref{thm3.4} only needs (FV1) on $M^\vee\times_BM^\vee$. The argument in Section \ref{sec:2} shows that, if we also have the other (FV2), then we may describe the graded ring object $\CH_M$ associated with $M$ using the convolution kernel $\CK$ on $M^\vee \times_B M^\vee \times_B M^\vee$.

More precisely, we consider the Chow-theoretic convolution class
\[
\FC := \FF^{-1} \circ [\Delta^{\mathrm{sm}}_{M/B}] \circ(\FF \times \FF) \in \Chow_*(M^\vee \times_B M^\vee \times_B M^\vee).
\]
Using \eqref{convolution} and singular Riemann--Roch (see \cite[Lemma 2.8]{MSY}), we also have
\begin{equation*}
\FC = \td(-p^*_{12}T_{M^\vee \times_B M^\vee}) \cap \tau(\CK)
\end{equation*}
where $p_{12}: M^\vee \times_B M^\vee \times_B M^\vee \to M^\vee \times_B M^\vee$ is the projection to the first two factors, $T_{M^\vee \times_B M^\vee}$ is the virtual tangent bundle of the l.c.i.~scheme $M^\vee \times_B M^\vee$, and $\tau(-)$ is the Baum--Fulton--MacPherson tau-class. In particular, the dimension bound in Definition~\ref{def:DAB}(a2) implies
\[
\FC \in \Chow_{\leq 2g + \dim B}(M^\vee \times_B M^\vee \times_B M^\vee).
\]
We then consider the codimension $g$ component of $\FC$:
\begin{equation}\label{conv_cycle} 
\FC_{\mathrm{red}} \in \Chow_{2g + \dim B}(M^\vee \times_B M^\vee \times_B M^\vee)
\end{equation}
which we call the \emph{reduced} convolution class.

\begin{thm}\label{thm:DAF}
Let $(M,B,M^\vee,\CP, \CK)$ be a dualizable abelian fibration satisfying both \textup{(FV1)} and \textup{(FV2)}. Then the graded ring object $\CH_M \in D^b_c(B)$ is completely determined by the reduced convolution class $\FC_{\mathrm{red}}$ in \eqref{conv_cycle}. In particular, given two dualizable abelian fibrations
\[
(M, B, M^\vee, \CP, \CK), \quad (M', B, M^\vee, \CP', \CK')
\]
satisfying \textup{(FV1)} and \textup{(FV2)}, we have two convolution classes
\[
\FC , \FC' \in \Chow_{\leq 2g + \dim B}(M^\vee \times_B M^\vee \times_B M^\vee )\]
induced by $\CK, \CK'$ respectively. If we have a match for the reduced parts
\begin{equation*}\label{reduced_eq}
\FC_{\mathrm{red}} = \FC'_{\mathrm{red}} \in \Chow_{2g+\dim B}(M^\vee \times_B M^\vee \times_B M^\vee ),
\end{equation*}
then there is an isomorphism
\[
\CH_M \simeq \CH_{M'} \in D^b_c(B)
\]
of graded ring objects.
\end{thm}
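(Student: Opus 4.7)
The plan is to abstract the arguments of Section \ref{sec:2} to the axiomatic setting of dualizable abelian fibrations satisfying both halves of Fourier vanishing, and then to identify both $\CH_M$ and $\CH_{M'}$ with a common graded ring object built purely from the dual side.

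First I would build, using the Chow-theoretic Fourier transform $\FF$ and its inverse $\FF^{-1}$, the semi-orthogonal idempotents
\[
\Fp^M_{\leq k} := \sum_{i \leq k} \FF_i \circ \FF^{-1}_{2g - i}, \qquad \Fp^\vee_{\leq k} := \sum_{i \leq k} \FF^{-1}_i \circ \FF_{2g - i},
\]
acting both Chow-theoretically and sheaf-theoretically on $\pi_*\BQ_M$ and $\pi^\vee_*\BQ_{M^\vee}$; semi-orthogonality of $\Fp^M_{\leq k}$ is a direct consequence of ($\frac{1}{2}$FV2), and that of $\Fp^\vee_{\leq k}$ of ($\frac{1}{2}$FV1). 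The key structural input I need next is an axiomatic version of the Realization Theorem \cite[Corollary 4.6(ii)]{MSY}: the images of these idempotents realize the perverse truncations ${^\Fp}\tau_{\leq k + \dim B}$ of $\pi_*\BQ_M$ and of $\pi^\vee_*\BQ_{M^\vee}$ and provide non-canonical splittings of the perverse filtrations. Its proof, beyond the two $\frac{1}{2}$Fourier vanishings, requires only the full-support property (b) of Definition \ref{def:DAB}; this is precisely why (b) has been stated symmetrically for $\pi$ and $\pi^\vee$, in contrast to \cite[Definition 1.2]{MSY}.

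Once Realization is in place, the proof of Theorem \ref{thm:mainint} transfers essentially verbatim. I extract: (i) Weak Fourier-stability, giving mutually inverse isomorphisms $\overline{\FF_k} : \CH^\vee_{(2g - k)}[2g - 2k] \xrightarrow{\simeq} \CH_{(k)}$ and hence a graded isomorphism $\widetilde{\CH}^\vee := \bigoplus_k \CH^\vee_{(2g - k)}[2g - 2k] \xrightarrow{\simeq} \CH_M$ in $D^b_c(B)$; (ii) multiplicativity for the reduced convolution, namely the convolution induced by $\FC$ descends to $\overline{\ast}_{\mathrm{red}}: \CH^\vee_{(k)} \otimes \CH^\vee_{(l)} \to \CH^\vee_{(k + l - 2g)}[-2g]$, and this descended product depends only on $\FC_{\mathrm{red}}$ because all higher-codimension contributions to $\FC$ are killed by the perverse truncation, by the same dimension-versus-Fourier-vanishing juggling as in the proof of Theorem \ref{thm:mainint}(ii); (iii) the commutativity of the analogue of diagram \eqref{eq:diagram}, which shows that $\bigoplus_k \overline{\FF_k}$ intertwines $\overline{\ast}_{\mathrm{red}}$ on $\widetilde{\CH}^\vee$ with the cup product $\overline{\cup}$ on $\CH_M$. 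Consequently, $(\CH_M, \overline{\cup})$ is isomorphic as a graded ring object to $(\widetilde{\CH}^\vee, \overline{\ast}_{\mathrm{red}})$, a datum constructed entirely from $\pi^\vee: M^\vee \to B$ and the class $\FC_{\mathrm{red}}$.

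For the comparison statement I apply the above simultaneously to $(M, B, M^\vee, \CP, \CK)$ and $(M', B, M^\vee, \CP', \CK')$. Since both share the same dual $\pi^\vee : M^\vee \to B$, they produce the same underlying graded object $\widetilde{\CH}^\vee$; the hypothesis $\FC_{\mathrm{red}} = \FC'_{\mathrm{red}}$ then forces the two reduced convolution products on $\widetilde{\CH}^\vee$ to coincide, and composing the two Fourier isomorphisms yields
\[
\CH_M \xleftarrow{\simeq} (\widetilde{\CH}^\vee, \overline{\ast}_{\mathrm{red}}) \xrightarrow{\simeq} \CH_{M'}
\]
as graded ring objects. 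The main obstacle is the symmetric upgrade of the Realization Theorem: in \cite{MSY} full support was needed only for $\pi$, because the specific dual side (the abelian scheme of degree $0$) carries a transparent perverse structure. In the present axiomatic framework one needs ($\frac{1}{2}$FV2) together with full support for $\pi^\vee$ in order to conclude that $\mathrm{Im}(\Fp^M_{\leq k}) \hookrightarrow \pi_*\BQ_M$ actually equals the perverse truncation rather than merely sitting inside it; once that step is secured, all remaining steps are formal consequences of the diagrammatic arguments already developed in Section \ref{sec:2.2}.
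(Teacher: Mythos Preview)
Your proposal is correct and follows essentially the same approach as the paper: both argue that the proof of Theorem \ref{thm:mainint} transfers verbatim to the axiomatic setting, with the only geometric inputs being the dimension bound on $\CK$, the two $\frac{1}{2}$Fourier vanishings, and full support for both $\pi$ and $\pi^\vee$, yielding an isomorphism of graded ring objects $\CH_M \simeq (\widetilde{\CH}^\vee, \overline{\ast}_{\mathrm{red}})$ determined entirely by $\FC_{\mathrm{red}}$. Your identification of the symmetric Realization step (requiring full support on the dual side, as built into Definition \ref{def:DAB}(b)) as the key structural upgrade over \cite[Definition 1.2]{MSY} matches the paper's remark following Definition \ref{def:DAB}.
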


\begin{proof}
    The proof is essentially identical to the one given for compactified Jacobians associated with integral curves in Section \ref{sec:2}. For a dualizable abelian fibration
    \[
    (M, B, M^\vee, \CP, \CK),
    \]
    we show that the graded ring object $\CH_M$ is isomorphic via the Fourier transform to $\widetilde{\CH}_{M^\vee}$ endowed with the convolution product induced by $\FC_{\mathrm{red}}$, as in (\ref{H_tilde}). The proof of Theorem \ref{thm:mainint} works \emph{verbatim}, since the only geometric inputs required are 
    \begin{enumerate}
        \item[(i)] the dimension bound for the support of the convolution kernel $\CK$,
        \item[(ii)] the Fourier vanishing, and
        \item[(iii)] the full support property of $\pi, \pi^\vee$,
    \end{enumerate}
    all of which remain valid in the general setting. The rest of the argument is entirely formal.
\end{proof}

\begin{rmk}
Under the assumption of Theorem \ref{thm:DAF}, statements parallel to Theorem \ref{thm:mainint} hold. Our result also shows that the induced cup-product on the associated graded of the cohomology with respect to the perverse filtration
\[
\BH_M:= \bigoplus_{k,m}\mathrm{Gr}_k^PH^m(M, \BQ)
\]
only relies on the reduced convolution class $\FC_{\mathrm{red}}$ on $M^\vee\times_BM^\vee\times_B M^\vee$. As in Remark \ref{rem:chow}, the parallel statements at the Chow level is conditional on certain Bloch--Beilinson type motivic conjectures.
\end{rmk}

In the next sections, we apply Theorem \ref{thm:DAF} to fine compactified Jacobians over the moduli space $\overline{\CM}_{g,n}$ of stable marked curves, and for versal deformations of a reduced locally planar curve, which completes the proofs of Theorems \ref{thm0.7} and \ref{thm0.8}.

\subsection{Relative fine compactified Jacobians}

Let $C \to B$ be a flat projective family of connected reduced locally planar curves of arithmetic genus $g$ over a nonsingular base variety~$B$, with a section~\mbox{$s: B \to C$} through the smooth locus of $C \to B$. Let $g: J^{\underline{0}}_C \to B$ be the relative Picard space parameterizing pairs
\[
(C_b, L_b),  \quad L_b \in J^{\underline{0}}_{C_b},
\]
where $C_b \subset C$ is the fiber over a closed point $b\in B$ and $L_b$ is a multidegree $0$ line bundle on~$C_b$. The commutative group scheme $J^{\underline{0}}_{C_b}$ is irreducible, admitting a Chevalley decomposition
\[
0 \to R_b \to J^{\underline{0}}_{C_b} \to A_b \to 0
\]
with $R_b$ an commutative affine group and $A_b$ an abelian variety. This defines a constructible function
\[
\delta: B \to \BZ_{\geq 0}, \quad b \mapsto \dim R_b.
\]
For an irreducible closed subscheme $Z\subset B$, we define $\delta(Z)$ to be $\delta(b)$ with $b\in Z$ a general closed point; alternatively, we have
\[
\delta(Z) = \mathrm{min}\{\delta(b),\,  b\in Z\}.
\]
Following \cite{Ngo}, we say that $C \to B$ is \emph{$\delta$-regular}, if we have
\[
\delta_Z \leq \mathrm{codim}_B(Z)  
\]
for any irreducible closed $Z\subset B$. 


To study fine compactified Jacobians in the relative setting, we consider the polarization stability conditions of Esteves \cite{Est}; this specializes to the stability conditions of Kass--Pagani~\cite{KP} and Melo~\cite{Melo} for the universal curve over $\overline{\CM}_{g,n}$, and the stability conditions considered in Migliorini--Shende--Viviani~\cite{MSV} for versal deformations of a reduced locally planar curve.

Since the explicit form of a stability condition is not important for our purpose, in the following we only describe the definitions of stability conditions and relative fine compactified Jacobians briefly; we refer to \cite{MRV0} for more details. We mainly need the existence of relative fine compactified Jacobians and some properties which we will emphasize.

Recall that a polarization $\phi_b$ of degree $d$ on a connected reduced curve $C_b$ is an assignment of a rational number to each irreducible component of $C_b$, whose total sum over all irreducible components is $d$. A polarization as above yields a stability condition for $C_b$, which allows us to consider stable and semistable rank $1$ torsion-free sheaves on $C_b$ via inequalities with respect to $\phi_b$. A stability condition $\phi_b$ is called \emph{nondegenerate} if there are no strictly semistable sheaves. In this case, the \emph{fine compactified Jacobian} $\overline{J}^{\phi_b}_{C_b}$ is the moduli space of $\phi_b$-stable rank $1$ torsion-free sheaves on $C_b$. For two different nondegenerate stability conditions $\phi_b, \phi'_b$ associated with the same curve $C_b$, the fine compactified Jacobians $\overline{J}^{\phi_b}_{C_b}, \overline{J}^{\phi'_b}_{C_b}$ are birational but are not isomorphic in general; see \cite[Theorem B]{MRV0}.

Now we consider the relative case. A polarization stability condition for the family $C\to B$ is a polarization on each closed fiber $C_b$ which is compatible with specializations \cite[Section~5]{MRV0}. Once we fix a polarization stability condition
\[
\phi:= \{\phi_b,\, b\in B\}
\]
for $C\to B$ which is nondegenerate (\emph{i.e.}, each $\phi_b$ is nondegenerate), we can consider the \emph{relative} fine compactified Jacobian
\[
\pi: \Jbar^\phi_C \to B
\]
parameterizing $(C_b, F_b)$ with $F_b$ a $\phi_b$-stable rank $1$ torsion-free sheaf on $C_b$.

\begin{rmk}
Strictly speaking, the relative fine compactified Jacobian $\Jbar^\phi_C$ may be an algebraic space. But for the purpose of proving Theorem \ref{thm0.8} one can always perform an \'etale base change over $B$ so that $\Jbar^\phi_C$ stays a scheme.
\end{rmk}

For the rest of this section, we assume that we are given a family of curves $C\to B$ as above, and two nondegenerate (polarization) stability conditions $\phi,\phi'$ of degree $0$, satisfying
\begin{enumerate}
    \item[(i)] the family $C\to B$ is $\delta$-regular, and 
    \item[(ii)] the total spaces of the associated relative fine compactified Jacobians
    \[
    \pi: \overline{J}^\phi_C \to B,\quad  \pi': \overline{J}^{\phi'}_C \to B
    \]
    are nonsingular varieties.
\end{enumerate}

We note that $\overline{J}_C^\phi$ contains a Zariski dense open subset $J_C^\phi$ parameterizing line bundles. The normalized (with respect to the section $s: B \to C$) Poincar\'e line bundle $\CL$ is well-defined over 
\[
J^{\phi'}_C \times_B \overline{J}^\phi_C \cup \overline{J}^{\phi'}_C \times_B J^\phi_C,
\]
and it can be extended to a maximal Cohen--Macaulay sheaf 
\[
\CP:=  j_* \CL \in \mathrm{Coh}(\overline{J}_C^{\phi'} \times_B \overline{J}_C^\phi)
\]
via the open embedding
\[
j: J^{\phi'}_C \times_B \overline{J}^\phi_C \cup \overline{J}^{\phi'}_C \times_B J^\phi_C \hookrightarrow \overline{J}_C^{\phi'} \times_B \overline{J}_C^\phi; 
\]
this was proven in \cite{MRV2} which was built on Arinkin's construction \cite{A2}.

\begin{thm}[\cite{A2, MRV2}]\label{thm:FM_P}
The Fourier--Mukai transform
\[
\mathrm{FM}_\CP: D^b\mathrm{Coh}(\overline{J}^{\phi'}_C) \rightarrow D^b\mathrm{Coh}(\overline{J}^\phi_C)
\]
is an equivalence, whose inverse
\[
\mathrm{FM}_{\CP^{-1}}: D^b\mathrm{Coh}(\overline{J}^{\phi}_C) \rightarrow D^b\mathrm{Coh}(\overline{J}^{\phi'}_C)
\]
is induced by 
\[
\CP^{-1}:= \CP^\vee \otimes p_{\overline{J}_C^\phi}^* \omega_{\overline{J}^{\phi}_C/B}[g] \in D^b\mathrm{Coh}(\overline{J}_C^\phi \times_B \overline{J}_C^{\phi'} ).
\]
Here $p_{\overline{J}_C^\phi}: \overline{J}_C^\phi \times_B \overline{J}_C^{\phi'} \to \overline{J}_C^\phi$ is the projection.
\end{thm}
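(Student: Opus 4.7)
The plan is to follow the strategy of Arinkin \cite{A2}, as extended to the relative and possibly reducible setting by Melo--Rapagnetta--Viviani \cite{MRV2}. The statement is equivalent to verifying the two identities
\[
\CP^{-1} \circ \CP \simeq \CO_{\Delta_{\overline{J}_C^{\phi'}/B}}, \qquad \CP \circ \CP^{-1} \simeq \CO_{\Delta_{\overline{J}_C^{\phi}/B}}
\]
inside the appropriate bounded derived categories of coherent sheaves; once these compositions are the identity kernels, the induced Fourier--Mukai functors are mutually inverse equivalences.

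First, I would establish the statement over the open locus
\[
U = J^{\phi'}_C \times_B \overline{J}^\phi_C \cup \overline{J}^{\phi'}_C \times_B J^\phi_C,
\]
on which at least one factor parametrizes honest line bundles. Over $U$ the complex $\CP$ is just the Poincaré line bundle $\CL$, and on the locus where the fibration restricts to the semi-abelian scheme $J^{\underline{0}}_C \to B$ the Fourier--Mukai transform reduces to the classical Mukai equivalence for generalized Jacobians. This produces the required isomorphism over $U$, together with canonical morphisms $\CO_{\Delta} \to \CP^{-1} \circ \CP$ and $\CP \circ \CP^{-1} \to \CO_{\Delta}$ on the full relative products, obtained from $\CP = j_*\CL$ via the adjunction $j^* \dashv j_*$.

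The core technical step, due to Arinkin, is to verify that these natural morphisms extend to isomorphisms across the boundary. This is a local question in an étale neighborhood of a point where the corresponding torsion-free sheaf fails to be locally free at a node $p$ of the underlying fiber $C_b$. Using the explicit local model for the compactified Jacobian at such a point (rank $1$ torsion-free non-locally free sheaves at $p$ correspond to ideal sheaves of branches of $C_b$ at $p$), one writes down a finite resolution of $\CP$ by maximal Cohen--Macaulay modules and computes the composition via a direct Koszul-type argument. The Cohen--Macaulay property guarantees that the derived composition is concentrated in degree zero, and a careful identification of the underlying coherent sheaf matches it with $\CO_{\Delta}$.

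The principal obstacle is globalization to families with potentially reducible fibers and two distinct nondegenerate stability conditions. Here I would invoke the machinery of \cite{MRV2}: the hypotheses that both $\overline{J}^\phi_C$ and $\overline{J}^{\phi'}_C$ are nonsingular and that $C \to B$ is $\delta$-regular ensure that $\CP$ is flat over $B$, so the formation of $\CP \circ \CP^{-1}$ and $\CP^{-1} \circ \CP$ commutes with restriction to fibers, reducing the global claim to the fiberwise statement for each reduced locally planar curve $C_b$. The subtlety beyond Arinkin's original setting is that $\overline{J}^{\phi_b}_{C_b}$ and $\overline{J}^{\phi'_b}_{C_b}$ are only birational and not isomorphic in general, so the local model at each node must be compatible with both stability inequalities simultaneously; \cite{MRV2} addresses this by constructing $\CP$ uniformly in terms of branches and the relevant stability data.
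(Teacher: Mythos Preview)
The paper does not give its own proof of this theorem: it is stated with the attribution \cite{A2, MRV2} and is quoted as a known input, with no argument supplied. So there is no ``paper's proof'' to compare against; your proposal is effectively a sketch of the arguments in those references.

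As such a sketch, it is broadly on the right track, but two points are inaccurate and would not survive a careful write-up. First, you localize the core computation to \emph{nodes}, describing the non-locally-free locus via branches and a Koszul argument. Arinkin's theorem is for arbitrary \emph{planar} singularities, and his proof does not proceed by an explicit local resolution at each singularity type; rather, it goes through a global support/dimension estimate involving the Hilbert scheme of points and the Abel--Jacobi map, which is what forces the composition kernel to be Cohen--Macaulay of the correct codimension. Restricting to nodes would only handle the $\overline{\CM}_{g,n}$ case, not the general ``reduced locally planar'' setting of the theorem as stated.

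Second, you invoke $\delta$-regularity to get flatness of $\CP$ over $B$ and thereby reduce to fibers. In the paper's logic, $\delta$-regularity plays no role in Theorem~\ref{thm:FM_P}; it is used only afterwards, in Propositions~\ref{prop:supp} and~\ref{FV_reduced}, for the codimension bound on the convolution kernel and the $\frac{1}{2}$Fourier vanishing. The Fourier--Mukai equivalence itself rests on the nonsingularity of the total spaces (so that the Cohen--Macaulay extension $\CP = j_*\CL$ is well behaved) together with the Arinkin/MRV support argument. Conflating these two ingredients misrepresents where the difficulty lies.
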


For the triple $(\overline{J}^\phi_C, B, \overline{J}^{\phi'}_C)$, Theorem \ref{thm:FM_P} guarantees (a1) of Definition \ref{def:DAB} with the Poincar\'e complex given by the Fourier--Mukai kernel $\CP$. For (a2), we also need to consider the convolution kernel 
\begin{equation}\label{convolution11}
\CK:= \CP^{-1} \circ \CO_{\Delta_{\overline{J}^{\phi}_C/B}^{\mathrm{sm}}} \circ (\CP \boxtimes \CP) \in D^b\mathrm{Coh}(\overline{J}_C^{\phi'} \times_B \overline{J}_C^{\phi'}\times_B \overline{J}_C^{\phi'})
\end{equation}
associated with $\CP$, where $\Delta_{\overline{J}^{\phi}_C/B}^{\mathrm{sm}}$ is the relative small diagonal. 

Next, we describe the support of $\CK$. Let $B^\circ \subset B$ be the largest open subset such that the restriction of $C\to B$, denoted by $C^\circ \to B^\circ$, is smooth. Then the restriction of both $\pi, \pi'$ to~$B^\circ$ are identical to the Jacobian fibration
\[
\pi^\circ: J^0_{C^\circ} \to B^\circ.
\]
The restriction of the convolution kernel $\CK$ over $B^\circ$ is the structure sheaf of the relative graph~$\Gamma$ of the group law of $\pi^\circ$:
\[
\Gamma:=\{(C_b, L_1, L_2, L_3) \in J^0_{C^\circ} \times_B J^0_{C^\circ} \times_B J^0_{C^\circ},\, b\in B^\circ,\, L_1, L_2, L_3 \in \mathrm{Pic}^0(C_b),\, L_1\otimes L_2 = L_3\}.
\]
The following proposition is a slight extension of Arinkin's support property \cite[Proposition~3.2]{MSY}. It plays the role of (the easier) Proposition \ref{prop:trunC} in the case of integral curves.

\begin{prop}[Reduced convolution class]\label{prop:supp}
The support of the convolution kernel
\[
\mathrm{Supp}(\CK) \subset \overline{J}_C^{\phi'} \times_B \overline{J}_C^{\phi'}\times_B \overline{J}_C^{\phi'}.
\]
is of codimension $g$. Moreover, the Zariski closure  
\[
\overline{\Gamma} \subset \overline{J}_C^{\phi'} \times_B \overline{J}_C^{\phi'}\times_B \overline{J}_C^{\phi'}
\]
is the only codimension $g$ irreducible component of $\mathrm{Supp}(\CK)$; in particular, the codimension $g$ support of $\CK$ is independent of the stability condition $\phi$.
 \end{prop}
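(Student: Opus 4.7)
The plan is to reduce the statement to an Arinkin-type fiberwise support bound and then run a codimension count. Over the smooth locus $B^\circ \subset B$ the two fine compactified Jacobians $\overline{J}^\phi_C$ and $\overline{J}^{\phi'}_C$ both coincide with the abelian scheme $\pi^\circ: J^0_{C^\circ} \to B^\circ$, and $\CP|_{B^\circ}$ is the classical normalized Poincar\'e line bundle. By Mukai's computation as recalled in the proof of Proposition~\ref{prop:convker}, $\CK|_{B^\circ} \simeq \CO_\Gamma$. Thus $\overline{\Gamma}$ is an irreducible component of $\mathrm{Supp}(\CK)$ of codimension exactly $g$, and since $\overline{\Gamma}$ depends only on the group-scheme structure of $J^{\underline{0}}_C$ and not on $\phi$, proving that it is the \emph{unique} codimension $g$ component will directly yield the stated $\phi$-independence.

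The main ingredient I would need is the fiberwise bound
\[
\dim \mathrm{Supp}(\CK)_b \leq 2g \quad \text{for every } b \in B.
\]
For $\phi = \phi'$ this is essentially Arinkin's support theorem as used in \cite[Proposition 3.2]{MSY}. For the mixed case $\phi \neq \phi'$, the plan is to argue that \'etale-locally on $B$ the pair $(\overline{J}^\phi_C, \CP)$ can be identified with $(\overline{J}^{\phi'}_C, \CP')$ (the Poincar\'e complex for a single stability condition on both sides) up to twisting by pullbacks of maximal Cohen--Macaulay rank $1$ sheaves on the two factors, by invoking the wall-crossing comparison between nondegenerate chambers from \cite{MRV0}. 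Such a twist changes the associated convolution kernel only by tensoring with pullback line-bundle-type objects, so it preserves the support, and the bound reduces to the single-stability case.

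Granted this bound, let $Z \subset \mathrm{Supp}(\CK)$ be any irreducible component with image $V \subset B$ and $c := \mathrm{codim}_B(V)$. The generic fiber of $Z \to V$ has dimension $\dim Z - \dim V \leq 2g$, so $\mathrm{codim}(Z) = 3g + \dim B - \dim Z \geq g + c \geq g$, with equality only when $c = 0$. If equality holds, $Z$ dominates $B$ and its intersection with the preimage of $B^\circ$ is a non-empty open subset of $Z$ contained in $\Gamma$; being irreducible of the same dimension, $Z = \overline{\Gamma}$. This yields both parts of the proposition. The main obstacle is the \'etale-local identification of Poincar\'e complexes across stability chambers --- justifying that the mixed-stability convolution kernel has the same codimension $g$ support as the single-stability one; once this is in place, the remaining dimension count and the generic Mukai-style computation are routine.
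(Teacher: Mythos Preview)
Your outline has the right shape---restrict to $B^\circ$, identify $\overline{\Gamma}$ there, then bound the support over $B\setminus B^\circ$---but both of the ingredients you invoke have genuine gaps, and the one you flag as the ``main obstacle'' is actually the less serious of the two.

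The uniform fiberwise bound $\dim \mathrm{Supp}(\CK)_b \leq 2g$ is not what Arinkin's argument gives, even in the case $\phi=\phi'$. What \cite[Proposition~3.2]{MSY} (and its extension via \cite{MRV2}) actually produce is a bound of the form
\[
\mathrm{codim}_{(\overline{J}^{\phi'_b}_{C_b})^{\times 3}}\bigl(\mathrm{Supp}(\CK)_b\bigr) \;>\; g - \delta(b),
\]
obtained by first showing that any $(F_1,F_2,F_3)$ in the support satisfies $\bigotimes_i F_i|_{C_b^{\mathrm{reg}}} \simeq \CO_{C_b^{\mathrm{reg}}}$, and then using the action of $J^{\underline{0}}_{C_b}$ on the triple product together with a fiber-dimension bound of $\delta(b)$ from \cite[Corollary~7.6]{A2}. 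For curves with $\delta(b)\geq 2$ this is strictly weaker than your claimed $\leq 2g$. The passage from this fiberwise estimate to the global codimension-$g$ statement is precisely where the $\delta$-regularity hypothesis $\delta(Z)\leq \mathrm{codim}_B(Z)$ enters, and it is entirely absent from your count; without it the inequality $\mathrm{codim}(Z)\geq g+c$ simply does not follow.

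As for the wall-crossing step: the proposed \'etale-local identification of $\overline{J}^\phi_C$ with $\overline{J}^{\phi'}_C$ is not available---fine compactified Jacobians in different chambers are birational but in general non-isomorphic even over a single fiber (cf.~\cite[Theorem~B]{MRV0}), so no twist on the factors converts one into the other. But this detour is also unnecessary. The paper works directly with the auxiliary object $\CK_3 = p_{123*}(p_{14}^*\CP \otimes p_{24}^*\CP \otimes p_{34}^*\CP)$, and the key identity $\bigotimes_i F_i|_{C_b^{\mathrm{reg}}} \simeq \CO_{C_b^{\mathrm{reg}}}$ comes from the construction of the Poincar\'e sheaf on the regular locus, which is insensitive to the stability condition $\phi$ on the other factor. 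So the Arinkin-type estimate applies uniformly in $\phi$ with no reduction to the case $\phi=\phi'$.
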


\begin{proof}
The proof is parallel to that of \cite[Proposition 3.2]{MSY}; the arguments there need to be generalized from integral curves to reduced curves, and this was already worked through in~\cite{MRV2} (in order to establish Theorem \ref{thm:FM_P}). For the reader's convenience, in the following, we explain in more detail that the $\delta$-regularity assumption not only guarantees that the support of $\CK$ is of codimension $g$ as in \cite[Proposition 3.2]{MSY}, but also determines the support completely as stated here in the proposition.

We consider 
\[
\CK_3: = p_{123 *}\left( p_{14}^* \CP \otimes p_{24}^*\CP \otimes p_{34}^*\CP\right) \in D^b\mathrm{Coh}(\overline{J}_C^{\phi'} \times_B \overline{J}_C^{\phi'}\times_B \overline{J}_C^{\phi'})
\]
where the $p_{ijk}, p_{ij}$ are the natural projections from $\overline{J}_C^{\phi'} \times_B \overline{J}_C^{\phi'}\times_B \overline{J}_C^{\phi'} \times_B \overline{J}_C^\phi$. The convolution kernel (\ref{convolution11}) is governed by $\CK_3$; it suffices to show that the restriction of $\CK_3$ over $B\setminus B^\circ$ has codimension $>g$.

We consider $b\in B\setminus B^\circ$ (in particular, $\delta(b) >0$), and a point
\[
(F_1,F_2,F_3) \in Z_b:= \mathrm{Supp}(\CK_3) \cap (\overline{J}_{C_b}^{\phi'_b} )^{\times 3}. 
\]
By an identical argument as in the proof of \cite[Proposition 3.2]{MSY}, we have
\begin{equation}\label{smooth_id}
\bigotimes_{i=1}^3 ({F_i}|_{C^{\mathrm{reg}}_b} ) \simeq \CO_{C^{\mathrm{reg}}_b}
\end{equation}
where $C^{\mathrm{reg}}_b \subset C_b$ is the regular locus; we note that \cite[Proposition 3.2]{MSY} only handles integral locally planar curves, but the proof of \cite[Proposition 6.3]{MRV2} generalizes all the relevant arguments to reduced ones.

Next, we recall the natural action 
\[
\mu_b: J^{\underline{0}}_{C_b} \times (\overline{J}_{C_b}^{\phi'_b} )^{\times 3} \to (\overline{J}_{C_b}^{\phi'_b} )^{\times 3}, \quad (L, F_1, F_2, F_3)\mapsto (L\otimes F_1, L\otimes F_2, L\otimes F_3).
\]
It is a smooth morphism of relative dimension $g$. Hence we have
\begin{equation}\label{codim1}
\mathrm{codim}_{(\overline{J}_{C_b}^{\phi'_b} )^{\times 3}}\left( Z_b\right) = \mathrm{codim}_{J^{\underline{0}}_{C_b} \times (\overline{J}_{C_b}^{\phi'_b} )^{\times 3}}\left( \mu_b^{-1}(Z_b)\right).
\end{equation}
Let
\[
\sigma_b: \mu_b^{-1}(Z_b) \to (\overline{J}_{C_b}^{\phi'_b} )^{\times 3}
\]
be the composition of the natural inclusion 
$\mu_b^{-1}(Z_b) \hookrightarrow J_{C_b}^{\underline{0}}\times (\overline{J}_{C_b}^{\phi'_b} )^{\times 3}$, and the projection to the second factor; every closed fiber of $\sigma_b$ lies in the irreducible group variety $J^{\underline{0}}_{C_b}$. By the proof of~\cite[Corollary 7.6]{A2}, (\ref{smooth_id}) implies that any closed fiber of $\sigma_b$ has dimension $\leq \delta(b)$, while a fiber over any closed point lying in the Zariski dense open subset
\[
(J_{C_b}^{\phi'_b})^{\times 3} \subset (\overline{J}_{C_b}^{\phi'_b} )^{\times 3}
\]
is $0$-dimensional. In particular, for each $b\in B$ we have
\[
\mathrm{codim}_{J^{\underline{0}}_{C_b} \times (\overline{J}_{C_b}^{\phi'_b} )^{\times 3}}\left( \mu_b^{-1}(Z_b)\right) > g-\delta(b)
\]
which by (\ref{codim1}) further implies 
\[
\mathrm{codim}_{(\overline{J}_{C_b}^{\phi'_b} )^{\times 3}}\left( Z_b\right) > g-\delta(b).
\]
Consequently, we deduce from the $\delta$-regularity of $C\to B$ that there is no irreducible codimension~$g$ component contained in the support of $\CK_3$ over $B\setminus B^\circ$. This completes the proof of the proposition.
\end{proof}

\begin{rmk}
    Arinkin's arguments \cite{A2} can further be applied to show that $\CK$ is a Cohen--Macaulay sheaf supported on the Zariski closure of $\Gamma$. This is not needed for our purpose, and we leave it to the interested reader.
\end{rmk}

By Theorem \ref{thm:FM_P} and Proposition \ref{prop:supp}, the data
\[
(\overline{J}^\phi_C, B, \overline{J}^{\phi'}_C, \CP, \CK)
\]
satisfy both (a1, a2) of Definition \ref{def:DAB}. Recall the Fourier transforms $\FF, \FF^{-1}$ induced by $\CP, \CP^{-1}$ respectively. We note that the Fourier vanishing also follows from the $\delta$-regularity of $C\to B$.

\begin{prop}[Fourier vanishing {\cite[Section 3.5]{MSY}}] \label{FV_reduced}
For all $i + j < 2g$, we have
\begin{gather*}
\tag{FV1} \FF^{-1}_j\circ \FF_i = 0 \in \Chow_{3g - i - j + \dim B}(\Jbar_C^{\phi'} \times_B \Jbar_C^{\phi'}), \\
\tag{FV2} \FF_j \circ \FF^{-1}_i = 0 \in \Chow_{3g - i - j + \dim B}(\Jbar_C^{\phi} \times_B \Jbar_C^{\phi}).
\end{gather*}
\end{prop}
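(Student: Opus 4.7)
The plan is to mirror the argument of [MSY, Section 3.5], now generalized to the setting where the two possibly distinct compactified Jacobians $\overline{J}^\phi_C$ and $\overline{J}^{\phi'}_C$ play the two sides of the Fourier--Mukai duality. The two statements ($\frac12$FV1) and ($\frac12$FV2) are proved symmetrically, so I describe only ($\frac12$FV1); the other follows by exchanging the roles of the two compactifications and using $\CP \circ \CP^{-1} \simeq \CO_{\Delta_{\overline{J}^\phi_C/B}}$ instead of $\CP^{-1} \circ \CP \simeq \CO_{\Delta_{\overline{J}^{\phi'}_C/B}}$.

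My first step is to verify the full support property (Definition 3.1(b)) for both $\pi$ and $\pi'$. Since $C\to B$ is $\delta$-regular and the abelian part of the Chevalley decomposition of each fiber carries the relative theta polarization, Ng\^o's support theorem applies, making every simple perverse summand of $\pi_*\BQ_{\overline{J}^\phi_C}$ and $\pi'_*\BQ_{\overline{J}^{\phi'}_C}$ an intermediate extension from any dense open subset of $B$. Next, over the smooth locus $B^\circ\subset B$ where $C\to B^\circ$ is smooth, both $\overline{J}^\phi_C$ and $\overline{J}^{\phi'}_C$ restrict to the common abelian scheme $\pi^\circ\colon J^0_{C^\circ}\to B^\circ$ and $\CP|_{B^\circ}$ is the normalized Poincar\'e line bundle. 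Beauville's multiplication-by-$N$ argument in the form of Proposition 1.5 and Lemma 1.7 then yields the stronger full Fourier vanishing $\FF^{-1}_j \circ \FF_i|_{B^\circ} = 0$ for $i+j\neq 2g$, which in particular implies ($\frac12$FV1) over $B^\circ$.

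The main obstacle is spreading this from $B^\circ$ to $B$ at the Chow level. The strategy is to employ the $[N]$-action of the relative multidegree-zero Jacobian $J^{\underline 0}_C\to B$ on $\overline{J}^{\phi'}_C$ by tensoring, which preserves $\phi'$-stability since multidegree is unchanged, together with the identity $([N]\times \id)^*\CP \simeq \CP^{\otimes N}$ that holds up to a correction supported in codimension $>g$; this reflects the Cohen--Macaulay extension of the Poincar\'e line bundle established in [MRV2]. Passing to Chern characters in $\Chow^*(\overline{J}^{\phi'}_C \times_B \overline{J}^\phi_C)$, one obtains $([N]\times \id)^*\FF_i = N^i \FF_i$ up to cycles supported over $B\setminus B^\circ$ whose dimensions are bounded by the $\delta$-regularity count used in the proof of Proposition 3.12. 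Applying $([N]\times \id)^*$ to the defining relation $\FF^{-1}\circ\FF = [\Delta_{\overline{J}^{\phi'}_C/B}]$ and letting $N$ vary produces a Vandermonde-type system that decouples the degree components and yields the individual vanishings of $\FF^{-1}_j \circ \FF_i$ for $i+j<2g$. The hardest point is controlling the $[N]$-equivariance of $\CP$ on the singular fibers, where the Cohen--Macaulay structure of $\CP$ is essential to ensure that the error terms are confined to codimensions high enough not to interfere with the low-codimension components targeted by ($\frac12$FV1) and ($\frac12$FV2).
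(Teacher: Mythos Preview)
Your proposal has a genuine gap: the self-map $[N]\colon \overline{J}^{\phi'}_C \to \overline{J}^{\phi'}_C$ you invoke does not exist. The compactified Jacobian is not a group scheme, and tensoring a rank~$1$ torsion-free sheaf with itself $N$ times does not produce another rank~$1$ torsion-free sheaf on the singular fibers. The action of $J^{\underline 0}_C$ on $\overline{J}^{\phi'}_C$ by tensoring with multidegree-zero \emph{line bundles} is well-defined, but that is a map $J^{\underline 0}_C \times_B \overline{J}^{\phi'}_C \to \overline{J}^{\phi'}_C$, not a self-map of $\overline{J}^{\phi'}_C$, and it does not give you a way to pull back $\CP$ along $([N]\times\mathrm{id})$ as you describe. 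Consequently the equation $([N]\times \mathrm{id})^*\FF_i = N^i\FF_i$ (even up to high-codimension corrections) has no meaning, and the Vandermonde decoupling cannot be carried out in this form.

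The paper's argument, following \cite[Section~3.5]{MSY}, replaces the unavailable geometric $[N]$-map by an algebraic surrogate: one forms the sequence of objects
\[
\widetilde{\CK}(N) := \CP^{-1} \circ (i_*\CP)^{\otimes N} \in D^b\mathrm{Coh}(\overline{J}^{\phi'}_C \times \overline{J}^{\phi'}_C), \quad N \in \BZ_{>0},
\]
where $i\colon \overline{J}^{\phi'}_C \times_B \overline{J}^{\phi'}_C \hookrightarrow \overline{J}^{\phi'}_C \times \overline{J}^{\phi'}_C$ is the closed embedding, and proves via $\delta$-regularity (exactly as in Proposition~\ref{prop:supp}) that each $\widetilde{\CK}(N)$ is supported in codimension $\geq g$ on the fiber product. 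The Vandermonde-type separation is then achieved not by varying a geometric pullback but via \emph{Adams operations} on the Chern character, as in \cite[Section~3.5.2]{MSY}; these act on $\ch(\CP)$ with the correct eigenvalues $N^i$ on $\FF_i$ and allow the decoupling purely at the Chow level. Also note that the full support property you verify at the outset (Ng\^o's theorem) plays no role in this proposition; it enters only later, in Theorem~\ref{thm_main_reduced}.
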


\begin{proof}
The proof is identical to that in \cite[Section 3.5]{MSY} for integral curves. The $\delta$-regularity is used here in a parallel way as in the proof of Proposition \ref{prop:supp}. Indeed, instead of $\CK_3$, we consider another sequence of objects
\[
\widetilde{\CK}(N): = \CP^{-1} \circ (i_*\CP)^{\otimes N} \in D^b\mathrm{Coh}(\overline{J}^{\phi'}_C \times \overline{J}^{\phi'}_C), \quad N \in \BZ_{> 0}
\]
where $i: \overline{J}^{\phi'}_C \times_B \overline{J}^{\phi'}_C \hookrightarrow \overline{J}^{\phi'}_C \times \overline{J}^{\phi'}_C$ is the closed embedding. These objects are supported on $\overline{J}^{\phi'}_C \times_B \overline{J}^{\phi'}_C$, and we can deduce the codimension estimate
\[
\mathrm{codim}_{\overline{J}^{\phi'}_C \times_B \overline{J}^{\phi'}_C} \left( \mathrm{Supp}(\widetilde{\CK}(N))\right) \geq g
\]
by a similar proof as for Proposition \ref{prop:supp} using $\delta$-regularity. Then an argument via the Adams operations as in \cite[Section 3.5.2]{MSY} yields the desired (FV1). 

The other half (FV2) can be deduced similarly; see \cite[Proposition 2.9]{MSY2}.
\end{proof}

As before, let $\CH^\phi_{(k)}$ be the shifted perverse sheaves associated with the decomposition of the pushforward complex $\pi_*{\BQ_{\Jbar^\phi_C}}$ as in (\ref{eq:BBD}). The following theorem is an application of Theorem~\ref{thm:DAF} to relative fine compactified Jacobians.

\begin{thm}\label{thm_main_reduced}
    With the notation as above, assume that both morphisms
    \[
    \pi: \Jbar^\phi_C \to B, \quad \pi': \Jbar^{\phi'}_C \to B
    \]
    have full support. We have the following.
    \begin{enumerate}
        \item[(i)] The associated graded 
        \[
        \CH^\phi := \bigoplus_{k = 0}^{2g} \CH^\phi_{(k)}  \in D^b_c(B)
        \]
        is naturally a graded ring object induced by the cup-product on $\pi_*\BQ_{\Jbar^\phi_C}$.
        \item[(ii)] There is an isomorphism
        \[
        \CH^\phi \simeq \CH^{\phi'} \in D^b_c(B)
        \]
        of graded ring objects.
    \end{enumerate}
\end{thm}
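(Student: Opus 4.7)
The plan is to verify the hypotheses of the abstract Theorems \ref{thm3.4} and \ref{thm:DAF} and apply them directly. All of the required geometric inputs --- the Poincar\'e sheaf, the convolution kernel with its codimension bound, the $\frac{1}{2}$Fourier vanishing, and the full support property --- have been assembled earlier in this section for any pair of nondegenerate polarization stability conditions.

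For part (i), the tuple $(\Jbar^\phi_C, B, \Jbar^{\phi'}_C, \CP, \CK)$ is a dualizable abelian fibration in the sense of Definition \ref{def:DAB}: axiom (a1) is Theorem \ref{thm:FM_P}; axiom (a2) follows from the construction \eqref{convolution11} together with the codimension bound in Proposition \ref{prop:supp}; and axiom (b) is part of the hypothesis of the theorem. Moreover, Proposition \ref{FV_reduced} supplies ($\frac{1}{2}$FV1). Theorem \ref{thm3.4} then produces the induced cup-product $\overline{\cup}: \CH^\phi \otimes \CH^\phi \to \CH^\phi$, which proves (i).

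For part (ii), to apply Theorem \ref{thm:DAF} I need two dualizable abelian fibrations sharing a common dual $M^\vee$. I fix $M^\vee := \Jbar^{\phi'}_C$ and consider
\[
(M, B, M^\vee, \CP, \CK), \qquad (M', B, M^\vee, \CP', \CK')
\]
with $M = \Jbar^\phi_C$ and $M' = \Jbar^{\phi'}_C$: the first pair comes from part (i), and the second is obtained by specializing Theorem \ref{thm:FM_P} to the case where both stability conditions are equal to $\phi'$, i.e., the autoduality of $\Jbar^{\phi'}_C$. Axioms (a1), (a2), (b) hold for both fibrations, and Proposition \ref{FV_reduced} yields both ($\frac{1}{2}$FV1) and ($\frac{1}{2}$FV2) in each case. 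It then remains to identify the two reduced convolution classes
\[
\FC_{\mathrm{red}},\ \FC'_{\mathrm{red}} \in \Chow_{2g + \dim B}\bigl(\Jbar^{\phi'}_C \times_B \Jbar^{\phi'}_C \times_B \Jbar^{\phi'}_C\bigr).
\]
By Proposition \ref{prop:supp}, the codimension-$g$ support of each convolution kernel is the Zariski closure $\overline{\Gamma}$ of the graph of the group law over the smooth locus $B^\circ \subset B$. Arguing as in the proof of Proposition \ref{prop:trunC}, each reduced convolution class is then the fundamental class of this support with multiplicity determined over $B^\circ$, where Mukai's classical computation (as in the proof of Proposition \ref{prop:convker}) forces the restriction of each kernel to be $\CO_\Gamma$ up to a line bundle twist. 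Hence $\FC_{\mathrm{red}} = [\overline{\Gamma}] = \FC'_{\mathrm{red}}$, and Theorem \ref{thm:DAF} delivers the desired isomorphism $\CH^\phi \simeq \CH^{\phi'}$ of graded ring objects.

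The delicate point is precisely the identification of the codimension-$g$ component of $\mathrm{Supp}(\CK)$ with $\overline{\Gamma}$ alone: a priori, over $B \setminus B^\circ$ the support could acquire additional codimension-$g$ components, which would spoil any direct comparison between $\FC_{\mathrm{red}}$ and $\FC'_{\mathrm{red}}$. This is exactly what the $\delta$-regularity assumption on $C \to B$ rules out, via the fiberwise codimension estimate at the end of the proof of Proposition \ref{prop:supp}. Once that support analysis is granted, the rest of the argument is a formal consequence of the abstract machinery.
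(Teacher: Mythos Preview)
Your proposal is correct and follows essentially the same route as the paper: verify that $(\Jbar^\phi_C, B, \Jbar^{\phi'}_C, \CP, \CK)$ and its $\phi=\phi'$ specialization are dualizable abelian fibrations satisfying both halves of the Fourier vanishing, then invoke Theorems \ref{thm3.4} and \ref{thm:DAF} together with the identification $\FC_{\mathrm{red}} = [\overline{\Gamma}]$ from Proposition \ref{prop:supp}. Your added remark about the multiplicity being fixed over $B^\circ$ via Mukai's computation is a welcome clarification of a step the paper leaves implicit.
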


\begin{proof}
For (i), we first note that 
\begin{equation}\label{DAB1}
(\overline{J}^\phi_C, B, \overline{J}^{\phi'}_C, \CP_{\phi\phi'}, \CK_{\phi\phi'})
\end{equation}
is a dualizable abelian fibration; here we add subscripts for the Poincar\'e sheaf $\CP$ and the convolution kernel $\CK$ to indicate the dependence on the stability conditions. Indeed, as we mentioned before Proposition \ref{FV_reduced}, (a1, a2) of Definition \ref{def:DAB} are satisfied, and (b) is also satisfied by the assumption of the theorem. Therefore, we obtain (i) from Theorem \ref{thm3.4} and Proposition \ref{FV_reduced}.

Now we prove (ii). By setting $\phi = \phi'$ in (\ref{DAB1}), we obtain another dualizable abelian fibration
\begin{equation}\label{DAB2}
(\overline{J}^{\phi'}_C, B, \overline{J}^{\phi'}_C, \CP_{\phi'\phi'}, \CK_{\phi'\phi'}).
\end{equation}
Proposition \ref{prop:supp} implies that the reduced convolution classes associated with $\CK_{\phi\phi'}$ and $\CK_{\phi'\phi'}$ respectively coincide; both are given by
\[
[\overline{\Gamma}] \in \mathrm{CH}_{2g+ \dim B}(\Jbar^{\phi'}_C\times_B \Jbar^{\phi'}_C\times_B \Jbar^{\phi'}_C). 
\]
Therefore, we obtain (ii) by applying Theorem \ref{thm:DAF} to (\ref{DAB1}) and (\ref{DAB2}).
\end{proof}

\begin{rmk}
    We note that $\delta$-regularity of $C\to B$ is not sufficient to guarantee the full support property (\emph{i.e.}, Definition \ref{def:DAB}(b)) of $\pi: \Jbar^\phi_C \to B$. For example, an elliptic fibration with reducible fibers is always $\delta$-regular, but it has $0$-dimensional supports.  
\end{rmk}

\subsection{Proofs of Theorems \ref{thm0.7} and \ref{thm0.8}}

Theorems \ref{thm0.7} and \ref{thm0.8} are immediate consequences of Theorem \ref{thm_main_reduced}.

\begin{proof}[Proof of Theorem \ref{thm0.7}]
We consider the universal curve
\begin{equation}\label{univ_curve1}
C_{g,n} \to \overline{\CM}_{g,n}
\end{equation}
over the moduli of stable marked curves (or rather, the base change to the moduli of stable marked curves with a level structure constructed in \cite{ACV}). Since we assume $n\geq 1$, there is a section through the smooth locus of the family. Using this section, universal fine compactified Jacobians of different degrees are identified. So we only need to prove that 
\[
\BH^{0,\phi}_{g,n} \simeq \BH^{0,\phi'}_{g,n}
\]
with $\phi, \phi'$ nondegenerate. In view of Theorem \ref{thm_main_reduced}, it suffices to check that 
\begin{enumerate}
    \item[(i)] the universal curve (\ref{univ_curve1}) is $\delta$-regular,
    \item[(ii)] the universal fine compactified Jacobian $\overline{J}^{0,\phi}_{g,n}$ is nonsingular, and
    \item[(iii)] the morphism $\pi: \overline{J}^{0,\phi}_{g,n} \to \overline{\CM}_{g,n}$ has full support,
\end{enumerate}  
where we assume in (ii, iii) that $\phi$ is nondegenerate. 

Indeed,  (ii) is given by \cite[Corollary 4.4]{KP}; (i) and (iii) are given by \cite[Fact 2.4 and Theorem~1.9]{MSV}. This completes the proof.
\end{proof}

\begin{proof}[Proof of Theorem \ref{thm0.8}]
We can use a nonsingular closed point on the reduced locally planar curve $C_0$ to reduce to the case where both $\phi, \phi'$ are nondegenerate stability conditions of degree~$0$ for $C_0$.

Under the assumption of Theorem \ref{thm_main_reduced}, for any closed fiber $C_b$ of the family $C \to B$ there is a multiplicative perverse filtration 
\begin{equation*}    
P_0H^*(\overline{J}^{\phi_b}_{C_b}, \BQ) \subset P_1 H^*(\overline{J}^{\phi_b}_{C_b}, \BQ) \subset \cdots \subset P_{2g}H^*(\overline{J}^{\phi_b}_{C_b}, \BQ) = H^*(\overline{J}^{\phi_b}_{C_b}, \BQ),
\end{equation*}    
so that the associated graded $\BH_{C_b}^{\phi_b}$ is a bigraded $\BQ$-algebra; for two nondegenerate stability conditions $\phi, \phi'$, we further have an isomorphism of bigraded $\BQ$-algebras:
\[
\BH^{\phi_b}_{C_b} \simeq \BH^{\phi'_b}_{C_b}.
\]
Therefore, to prove Theorem \ref{thm0.8}, it suffices to check that for any reduced locally planar curve~$C_0$, there is a $\delta$-regular family $C\to B$ containing $C_0$ as a closed fiber, such that
\begin{enumerate}
    \item[(i)] any nondegenerate stability condition of degree $0$ for $C_0$ can be extended to a nondegenerate stability condition for $C\to B$,
    \item[(ii)] the relative fine compactified Jacobian associated with the nondegenerate stability condition given by (i) is nonsingular, and the morphism to the base has full support.
\end{enumerate}

Indeed, both (i) and (ii) are guaranteed by taking a versal deformation $C\to B$ of $C_0$ as in~\cite[Facts 2.3, 2.4, and Theorem 2.12]{MSV}, and the proof is complete.
\end{proof}

\section{Dependence on the stability}\label{sec:4}
In this section we complete the proof of Theorem \ref{thm0.6}. Discussions in Sections \ref{sec:4.1}-\ref{sec:new4.3} apply generally to arbitrary $g$ and $n\geq 1$. Starting from the end of Section \ref{sec:4.3} we specialize to the case $n=1$.

\subsection{Space of stability conditions} \label{sec:4.1}
For the definition of stability conditions, we refer to Kass--Pagani \cite{KP}. Recall that a stability condition $\phi$ is nondegenerate if there exists no strictly~$\phi$-semistable multidegree.

Let $\Mtl_{g,n}\subset\Mbar_{g,n}$ be the locus of treelike curves; these are stable marked curves whose graph is a tree with any number of self-loops attached, and they form an open substack of~$\Mbar_{g,n}$. A stability condition $\phi$ of degree $0$ is called {\em semismall} if the trivial line bundle on the universal curve over~$\Mtl_{g,n}$ is $\phi$-stable. When $\phi$ is semismall and nondegenerate, $\phi$-stable fine compactified Jacobians are canonically isomorphic over $\Mtl_{g,n}$.

Since we assumed $n \geq 1$, for any nondegenerate stability condition $\phi$ of degree $d$ we can find $\phi'$ nondegenerate and semismall such that the corresponding universal fine compactified Jacobians are isomorphic to each other:
\[
\Jbar^{d, \phi}_{g,n} \simeq \Jbar^{0, \phi'}_{g,n}.
\]
To see this, we can use the section given by the marking to change the degree, and twist by a vertical divisor to change the bidegree for curves with $2$ components and $1$ separating node; this governs the locus of treelike curves $\overline{\CM}^{\mathrm{tl}}_{g,n} \subset \overline{\CM}_{g,n}$ by \cite[Corollary 3.6]{KP}.

\subsection{Divisors on fine compactified Jacobians}\label{sec:4.2}
From now on, we assume that $\phi$ is a nondegenerate stability condition of degree $d$, and $\pi: \Jbar^{d,\phi}_{g,n}\to \overline{\CM}_{g,n}$ is the corresponding universal fine compactified Jacobian.

Let $\mathcal{C}_{g,n} \to \Jbar_{g,n}^{d,\phi}$ be the universal curve and let $\mathcal{F}$ be the universal sheaf on $\mathcal{C}_{g,n}$ trivialized along the section given by the first marking. By \cite{EP}, the universal sheaf $\mathcal{F}$ is the pushforward of the universal admissible line bundle $\mathcal{L}$ on the universal quasistable curve
\[
p : \mathcal{C}_{g,n}^\qs \to \Jbar_{g,n}^{d,\phi}.
\]
Here the admissible line bundle has degree $1$ on each unstable component of $\mathcal{C}_{g,n}^\qs$.
This universal family induces natural tautological classes, which generate a ring
\[R^*(\Jbar_{g,n}^{d,\phi})\subset \Chow^*(\Jbar_{g,n}^{d,\phi}),\]
called the \emph{tautological ring} of the universal fine compactified Jacobian $\Jbar^{d,\phi}_{g,n}$. Applying the cycle class map, we can also consider the cohomological counter-part of the tautological ring  
\[
RH^*(\Jbar^{d,\phi}_{g,n}, \BQ)\subset H^*(\Jbar^{d,\phi}_{g,n},\BQ).
\]
Although most of the cohomological arguments in this section take place in the tautological ring $RH^*(\Jbar^{d,\phi}_{g,n}, \BQ)$,  all relevant tautological classes will be given in explicit form. For this reason, we do not provide a complete definition of the tautological ring here, and instead refer the interested reader to \cite{BHPSS, BMP} for further details.

We first introduce several (tautological) divisor classes on $\Jbar_{g,n}^{d,\phi}$; they will be used throughout this section:
\[\Theta:=-\frac{1}{2}p_*(c_1(\mathcal{L})^2), \quad  \kappa_{0,1}:=p_*(c_1(\mathcal{L}) \cup c_1(\omega_{p,\log})),\]
and
\[\xi_i=x_i^*c_1(\mathcal{L}), \quad \psi_i = x_i^*c_1(\omega_p), \quad 1\le i\le n.\]
Here $x_i: \Jbar_{g,n}^{d,\phi} \to \mathcal{C}_{g,n}^\qs$ is the section corresponding to the $i$-th marking, and $\omega_p$ (resp.~$\omega_{p,\log}$) is the relative dualizing (resp.~log-canonical) line bundle.

We also recall the tautological ring 
\[
RH^*(\Mbar_{g,n}, \BQ) \subset H^*(\Mbar_{g,n}, \BQ)
\]
and the standard tautological classes
\[
\kappa_i, \lambda_i \in RH^{2i}(\overline{\CM}_{g,n}, \BQ), \quad  \psi_i \in RH^2(\overline{\CM}_{g,n}, \BQ);
\]
see \cite{Pand}. Classes on $\Mbar_{g,n}$ can also be viewed as classes on $\Jbar^{d,\phi}_{g,n}$ via the pullback
\[
\pi^*: H^*(\Mbar_{g,n}, \BQ) \hookrightarrow H^*(\Jbar^{d,\phi}_{g,n}, \BQ).
\]

\begin{lem}\label{lem:divisor}
    We have
    \[RH^2(\Jbar^{d,\phi}_{g,n},\BQ)=H^2(\Jbar_{g,n}^{d,\phi},\BQ).
    \]
    In particular, the group $H^2(\Jbar_{g,1}^{d,\phi},\BQ)$ is spanned by the classes $\Theta, \kappa_{0,1}$ modulo classes pulled back from $RH^2(\Mbar_{g,1},\BQ)$. 
\end{lem}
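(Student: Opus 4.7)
The plan is to analyse $H^2(\Jbar^{d,\phi}_{g,n},\BQ)$ via the Leray filtration (equivalently, by combining with the perverse filtration of Theorem \ref{thm0.7}) for the proper morphism $\pi: \Jbar^{d,\phi}_{g,n}\to \overline{\CM}_{g,n}$. The graded pieces of $H^2$ are controlled by
\[
H^0(\overline{\CM}_{g,n},R^2\pi_*\BQ), \quad H^1(\overline{\CM}_{g,n},R^1\pi_*\BQ), \quad H^2(\overline{\CM}_{g,n},\BQ),
\]
and I will argue that each is generated by tautological divisors on $\Jbar^{d,\phi}_{g,n}$, which already gives the equality $RH^2(\Jbar^{d,\phi}_{g,n},\BQ)=H^2(\Jbar^{d,\phi}_{g,n},\BQ)$.

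For the base piece $H^2(\overline{\CM}_{g,n},\BQ)$: this is tautological by the classical theorem of Arbarello--Cornalba, and thus contributes only $\pi^*RH^2(\overline{\CM}_{g,n},\BQ)$. For the top piece: over the smooth locus $\CM_{g,n}$ one has $R^2\pi_*\BQ\simeq \wedge^2 R^1\pi_*\BQ$, whose $\mathrm{Sp}_{2g}(\BZ)$-invariant global sections are one-dimensional and spanned by the restriction of the theta class $\Theta$; extra global sections arising over the boundary of $\overline{\CM}_{g,n}$ correspond to fibrewise divisor classes on exceptional strata of $\pi$, and via the presentation $\CF = p_*\CL$ of the universal sheaf from \cite{EP} these are matched with $\kappa_{0,1} = p_*(c_1(\CL)\cup c_1(\omega_{p,\log}))$ together with boundary pullbacks from $\overline{\CM}_{g,n}$. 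For the middle piece $H^1(\overline{\CM}_{g,n},R^1\pi_*\BQ)$: the first marking provides a section $x_1$ along which $\CL$ is normalized, and the relative Abel--Jacobi map identifies contributions with tautological classes built from the $\xi_i,\psi_i$ for the remaining markings and pullbacks from $\overline{\CM}_{g,n}$, along the lines of the tautological calculus of \cite{BHPSS, BMP}.

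Specialising to $n = 1$: we have $\xi_1 = x_1^*c_1(\CL) = 0$ by the chosen normalization of $\CL$ along the marking, while $\psi_1$ is by definition a pullback from $\overline{\CM}_{g,1}$. Consequently the only tautological divisors remaining outside $\pi^*RH^2(\overline{\CM}_{g,1},\BQ)$ are $\Theta$ and $\kappa_{0,1}$, yielding the explicit spanning set claimed in the second assertion.

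The main obstacle I expect lies in controlling $R^\bullet\pi_*\BQ$ across the boundary of $\overline{\CM}_{g,n}$: over singular stable curves the fibres of $\pi$ are singular compactified Jacobians and the direct image sheaves can acquire extra global or local sections beyond the smooth-locus picture coming from the polarized abelian scheme. Matching these additional sections to explicit tautological divisors (in particular to $\kappa_{0,1}$), and ruling out non-tautological contributions in the middle piece $H^1(\overline{\CM}_{g,n},R^1\pi_*\BQ)$, is the delicate step, and would typically rely on the Abel--Jacobi-type identities and pushforward formulas on the universal quasistable curve developed in \cite{EP, BHPSS, BMP} rather than a purely abstract Hodge-theoretic argument.
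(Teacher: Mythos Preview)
Your Leray-filtration approach is not wrong in spirit, but as you yourself acknowledge, it leaves genuine gaps: you do not actually compute $H^0(\overline{\CM}_{g,n},R^2\pi_*\BQ)$ or $H^1(\overline{\CM}_{g,n},R^1\pi_*\BQ)$ across the boundary, and the assertion that extra sections over singular fibres are ``matched with $\kappa_{0,1}$ together with boundary pullbacks'' is a hope, not an argument. In particular, showing that the middle piece $H^1(\overline{\CM}_{g,n},R^1\pi_*\BQ)$ is spanned by the $\xi_i,\psi_i$ and boundary classes is precisely a normal-function / monodromy statement that does not fall out of the references you cite without substantial additional work.

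The paper's proof sidesteps all of this with a short Hodge-theoretic reduction. First, by \cite{FV} the Chow group $\Chow^1(J_{g,n})$ over the open (smooth-curve) locus is already tautological; since $\Jbar^{d,\phi}_{g,n}$ is nonsingular, the excision sequence upgrades this to $R^1(\Jbar^{d,\phi}_{g,n})=\Chow^1(\Jbar^{d,\phi}_{g,n})$. Second, to pass from $\Chow^1$ to all of $H^2$ one invokes the Lefschetz $(1,1)$ theorem, so it remains only to show $H^{2,0}(\Jbar^{d,\phi}_{g,n})=0$. This vanishing is obtained by resolving the rational Abel--Jacobi map $\Mbar_{g,n+g}\dashrightarrow \Jbar^{d,\phi}_{g,n}$ to a proper surjection $a:\Mbar_{g,n+g}'\to\Jbar^{d,\phi}_{g,n}$ from a smooth birational modification of $\Mbar_{g,n+g}$; injectivity of $a^*$ on cohomology and the birational invariance of $H^{2,0}$ then reduce to the classical $H^{2,0}(\Mbar_{g,n+g})=0$ of Arbarello--Cornalba \cite{AC}. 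This two-step argument replaces the entire boundary analysis in your outline.
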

\begin{proof}
    By \cite{FV}, the Chow group $\Chow^1(J_{g,n})$ is spanned by tautological classes. Since $\Jbar_{g,n}^{d,\phi}$ is nonsingular, the excision sequence implies that $R^1(\Jbar_{g,n}^{d,\phi}) = \Chow^1(\Jbar_{g,n}^{d,\phi})$. By the Lefschetz $(1,1)$ theorem, it suffices to show that $H^{2,0}(\Jbar_{g,n}^{d,\phi}) = 0$.
    
    The rational Abel--Jacobi map $\Mbar_{g,n+g}\dashrightarrow \Jbar_{g,n}^{d,\phi}$ with respect to the last $g$ markings admits a resolution $a: \Mbar_{g,n+g}'\to\Jbar_{g,n}^{d,\phi}$ via a birational modification $\Mbar_{g,n+g}' \to \Mbar_{g,n+g}$ with~$\Mbar_{g,n+g}'$ nonsingular. Since $a$ is proper and surjective, the pullback map
    \[
    a^*:H^*(\Jbar^{d, \phi}_{g,n},\BQ)\to H^*(\Mbar_{g,n+g}',\BQ)
    \]
    is injective. As birational transformations do not affect $H^{2, 0}(-)$, the vanishing of~$H^{2, 0}(\Jbar^{d,\phi}_{g,n})$ follows from the fact that $H^{2,0}(\Mbar_{g,n+g}) = 0$ by \cite[Theorem 2.2]{AC}.
\end{proof}

\subsection{Vanishing via the perverse filtration} \label{sec:new4.3}

Now we consider the perverse filtration \eqref{p-filtration} associated with the fibration $\pi : \Jbar^{d,\phi}_{g,n} \to \Mbar_{g,n}$. We recall the following proposition from \cite[Proposition 8.7]{BMP}.

\begin{prop}\label{pro:kappa}
Taking cup-product with the class $\kappa_{0,1}\in H^2(\Jbar_{g,n}^{d,\phi},\BQ)$ satisfies
\[
\kappa_{0,1} \cup : P_kH^m(\Jbar_{g,n}^{d,\phi}, \BQ)  \to P_{k+1}H^{m+2}(\Jbar_{g,n}^{d,\phi}, \BQ).
\]
for any $k, m$. The same holds for the class $\xi_i \in H^2(\Jbar^{d,\phi}_{g,n},\BQ)$.
\end{prop}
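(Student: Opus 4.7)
My plan is to reduce the statement to the assertion that both $\kappa_{0,1}$ and $\xi_i$ lie in $P_1 H^2(\Jbar_{g,n}^{d,\phi}, \BQ)$. Once this is known, the multiplicativity of the perverse filtration (recalled from \cite{MSY} in Theorem \ref{thm:mult}) immediately gives the desired shift $P_k \to P_{k+1}$ under cup-product.

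The first step is to establish a fiber-restriction criterion for $P_1$-membership. Since $\pi: \Jbar_{g,n}^{d,\phi} \to \Mbar_{g,n}$ has full support (as used in the proof of Theorem \ref{thm0.7} via \cite{MSV}), each shifted perverse cohomology sheaf appearing in the decomposition of $\pi_*\BQ_{\Jbar_{g,n}^{d,\phi}}$ is the intermediate extension of its restriction to the smooth locus $\CM_{g,n} \subset \Mbar_{g,n}$. Over $\CM_{g,n}$ the perverse filtration matches the Leray filtration, and the top associated graded piece of $H^2$ is $H^0(\CM_{g,n}, R^2\pi_*\BQ)$, which is a local system. It follows that a class $D \in H^2(\Jbar_{g,n}^{d,\phi}, \BQ)$ belongs to $P_1$ if and only if its restriction $D|_A$ vanishes in $H^2(A, \BQ)$ for a single smooth fiber $A = J^d(C)$.

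Next I would verify this vanishing using the normalization of $\CL$ along the first marking $x_1$. On a smooth fiber, $\CL$ restricts on $C \times A$ to a Poincar\'e line bundle $\CP$ with $\CP|_{\{x_1\} \times A} \simeq \CO_A$. For $\xi_i$, the restriction is $c_1$ of $\CP|_{\{x_i\} \times A}$, which by the see-saw principle is translation-invariant on $A$ and hence trivial in $H^2(A, \BQ)$. For $\kappa_{0,1}$, I decompose $c_1(\CP) = \alpha_{2,0} + \alpha_{1,1} + \alpha_{0,2}$ by K\"unneth with $\alpha_{i,j} \in H^i(C, \BQ) \otimes H^j(A, \BQ)$; the above normalization forces $\alpha_{0,2} = 0$. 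Since $c_1(\omega_{p,\log})$ pulls back from $H^2(C, \BQ)$, a short degree count shows that the component of $c_1(\CP) \cdot c_1(\omega_{p,\log})$ in $H^2(C, \BQ) \otimes H^2(A, \BQ)$---the only one that survives the pushforward $p_{A*}$ to $H^2(A, \BQ)$---is supported on $\alpha_{0,2}$, and therefore vanishes.

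I expect the main obstacle to be the first step: the fiber-restriction criterion for $P_1$-membership depends on the full support theorem and the intermediate-extension structure of the decomposition, which must be invoked carefully. Once these structural ingredients are in place, the vanishing arguments for $\xi_i$ and $\kappa_{0,1}$ reduce to elementary computations with the normalized Poincar\'e line bundle.
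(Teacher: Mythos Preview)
Your approach is correct and gives a self-contained argument, whereas the paper does not prove this proposition at all---it simply cites \cite[Proposition~8.7]{BMP}. Your route is genuinely different in that it \emph{uses} the multiplicativity of the perverse filtration (one of the main results of this paper) to reduce to the membership $\kappa_{0,1},\xi_i\in P_1H^2$, and then verifies that membership by a fiber-restriction criterion exploiting full support. This is a clean reduction, and your fiber computations are correct: the normalization $\CL|_{\{x_1\}\times A}\simeq\CO_A$ forces the K\"unneth component $\alpha_{0,2}$ of $c_1(\CL)$ to vanish, from which both $\xi_i|_A=0$ and $\kappa_{0,1}|_A=0$ follow by the degree count you indicate.

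Two small corrections. First, the multiplicativity you need is not Theorem~\ref{thm:mult} (which is stated for compactified Jacobians of \emph{integral} curves), but rather Theorem~\ref{thm_main_reduced}(i) together with its application in the proof of Theorem~\ref{thm0.7}; these are what establish multiplicativity for $\pi:\Jbar_{g,n}^{d,\phi}\to\Mbar_{g,n}$ when $n\geq 1$. Second, your fiber-restriction criterion---that $D\in P_1H^2$ iff $D|_A=0$ for one smooth fiber---does hold, but spelling it out requires the injection $\mathrm{Gr}^P_2 H^2(\Jbar_{g,n}^{d,\phi},\BQ)=H^{-\dim B}\!\left(\Mbar_{g,n},\,{^{\Fp}}H^{2+\dim B}(\pi_*\BQ)\right)\hookrightarrow H^0(U,R^2\pi_*\BQ)$, which follows from full support (so that the perverse cohomology is an $IC$ sheaf) together with the cosupport condition $i^!IC\in{^{\Fp}}D^{\geq 1}$ on the boundary. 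You identify this as the delicate step, and it is; once stated precisely it is standard.
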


This means that the class $\kappa_{0,1}$ and $\xi_i$ have \emph{strong perversity} $1$ in the sense of \cite{MS_PW}. For example, Proposition \ref{pro:kappa} immediately implies
\begin{equation}\label{ThetaKappa}
\Theta^k \cup \kappa^l_{0,1} \cup \prod_i \xi^{m_i}_i \in P_{2k+l+\sum_i m_i} H^{2k + 2l +  2\sum_i{m_i}}(\Jbar_{g,n}^{d,\phi},\BQ).
\end{equation}

We also recall the following lemma (see \cite[Lemma 8.6]{BMP}) which shows that perversity implies vanishing after pushing forward.

\begin{lem}\label{lem:lowp}
    If $\alpha\in P_{2g-1}H^{*}(\Jbar^{d,\phi}_{g,n},\BQ)$, then
    \[
    \pi_*\alpha=0 \in H^*(\Mbar_{g, n}, \BQ).
    \]
\end{lem}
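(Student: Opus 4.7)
The plan is to realize $\pi_*$ on cohomology via a trace morphism in the derived category and then invoke hom-vanishing between different pieces of the perverse t-structure.

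Since $\pi\colon \Jbar^{d,\phi}_{g,n}\to\Mbar_{g,n}$ is proper between smooth stacks with fibres of dimension $g$, we have $\pi^!\BQ_{\Mbar_{g,n}}\simeq\BQ_{\Jbar^{d,\phi}_{g,n}}[2g]$, and the counit of the $(\pi_*,\pi^!)$-adjunction yields a trace morphism
\[
\mathrm{tr}\colon \pi_*\BQ_{\Jbar^{d,\phi}_{g,n}}\ra \BQ_{\Mbar_{g,n}}[-2g]
\]
in $D^b_c(\Mbar_{g,n})$, whose effect on $H^m(\Mbar_{g,n},-)$ is precisely the proper pushforward $\pi_*\colon H^m(\Jbar^{d,\phi}_{g,n},\BQ)\to H^{m-2g}(\Mbar_{g,n},\BQ)$.

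Since $\Mbar_{g,n}$ is smooth, $\BQ_{\Mbar_{g,n}}[\dim\Mbar_{g,n}]$ is perverse, so $\BQ_{\Mbar_{g,n}}[-2g]$ lies in ${^\Fp}D^{\geq 2g+\dim\Mbar_{g,n}}(\Mbar_{g,n})$. By the definition of the perverse filtration recalled in Section \ref{sec:2.1}, a class $\alpha\in P_{2g-1}H^m(\Jbar^{d,\phi}_{g,n},\BQ)$ lifts to some $\widetilde\alpha\in H^m(\Mbar_{g,n},{^\Fp}\tau_{\leq 2g-1+\dim\Mbar_{g,n}}(\pi_*\BQ_{\Jbar^{d,\phi}_{g,n}}))$ via the canonical inclusion ${^\Fp}\tau_{\leq 2g-1+\dim\Mbar_{g,n}}(\pi_*\BQ_{\Jbar^{d,\phi}_{g,n}})\to \pi_*\BQ_{\Jbar^{d,\phi}_{g,n}}$. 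The composition of this inclusion with $\mathrm{tr}$ is a morphism from an object in ${^\Fp}D^{\leq 2g-1+\dim\Mbar_{g,n}}$ to an object in ${^\Fp}D^{\geq 2g+\dim\Mbar_{g,n}}$, hence vanishes by the t-structure axioms. Taking $H^m(\Mbar_{g,n},-)$ gives $\pi_*\alpha = \mathrm{tr}_*(\alpha) = 0$.

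The argument is essentially formal once the trace morphism is set up, so no genuine obstacle is expected. The only geometric inputs are the smoothness of $\Mbar_{g,n}$ and of $\Jbar^{d,\phi}_{g,n}$ (the latter guaranteed by nondegeneracy of $\phi$ via \cite[Corollary 4.4]{KP}), used in identifying $\pi^!\BQ_{\Mbar_{g,n}}$ with $\BQ_{\Jbar^{d,\phi}_{g,n}}[2g]$; equivalently, one could phrase the trace via Poincar\'e duality on source and target. The statement and proof extend \emph{verbatim} to any proper map of smooth varieties (or smooth DM stacks admitting a nice quasiprojective cover) whose perverse filtration is under discussion.
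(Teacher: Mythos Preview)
Your proof is correct. The paper does not supply its own argument for this lemma; it merely quotes the statement from \cite[Lemma 8.6]{BMP}. Your approach via the trace morphism $\pi_*\BQ_{\Jbar^{d,\phi}_{g,n}}\to\BQ_{\Mbar_{g,n}}[-2g]$ and the $\Hom$-vanishing between ${^\Fp}D^{\leq 2g-1+\dim\Mbar_{g,n}}$ and ${^\Fp}D^{\geq 2g+\dim\Mbar_{g,n}}$ is the standard route and is what any proof of such a statement must amount to; your remark that smoothness of source and target (rather than smoothness of $\pi$) is the only geometric input needed for $\pi^!\BQ\simeq\BQ[2g]$ is also exactly right.
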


As we will apply later, Proposition \ref{pro:kappa} and Lemma \ref{lem:lowp} yield vanishing of the classes 
\[
\pi_*( \Theta^k \cup  \kappa_{0,1}^l )
\]
for many values of $k,l$; this significantly simplifies our calculations.

\subsection{Outline of the proof} \label{sec:new4.4}

We prove Theorem~\ref{thm0.6} by contradiction. Since $\Jbar^{d,\phi}_{g,1}$ is isomorphic to $\Jbar_{g,1}^{0,\phi'}$ for some nondegenerate semismall stability condition $\phi'$, we only consider stability conditions which are nondegenerate and semismall.

Let $\phi_1, \phi_2$ be nondegenerate semismall stability conditions, and suppose that there exists a graded ring isomorphism
\begin{equation}\label{eq:isom}
f : H^*(\Jbar_{g,1}^{0,\phi_1}, \BQ) \xrightarrow{\simeq} H^*(\Jbar_{g,1}^{0,\phi_2}, \BQ)
\end{equation}
which is linear over $H^*(\Mbar_{g,1}, \BQ)$. Since the top-degree cohomology of both fine compactified Jacobians is one-dimensional, there exists a nonzero constant $\cc \in \BQ^\times$ depending only on $f$ such~that
\begin{equation*}\label{eq:e}
    \cc \cdot \int_{\Jbar_{g,1}^{0,\phi_1}} \Xi =  \int_{\Jbar_{g,1}^{0,\phi_2}}f(\Xi)
\end{equation*}
for all $\Xi\in H^*(\Jbar_{g,1}^{0,\phi_1},\BQ)$. In particular, for any $\gamma \in H^*(\Mbar_{g,1}, \BQ)$ and $k,l \in \BZ_{\geq 0}$, we must have 
\begin{equation*}
    \cc \cdot \int_{\Jbar_{g,1}^{0,\phi_1}} \Theta^k\cup\kappa_{0,1}^l \cup \gamma =  \int_{\Jbar_{g,1}^{0,\phi_2}} f(\Theta)^k\cup f(\kappa_{0,1})^l\cup\gamma.
    \end{equation*}
For $i = 1,2$, let $\pi_i : \Jbar_{g,1}^{0,\phi_i}\to \Mbar_{g,1}$ be the natural projection. By the Poincar\'e duality for~$\Mbar_{g,1}$, the equality of the integrals above implies 
\begin{equation}\label{eq:int}
    \cc \cdot \pi_{1*}(\Theta^k \cup \kappa_{0,1}^l) = \pi_{2*}(f(\Theta)^k \cup f(\kappa_{0,1})^l) \in H^*(\Mbar_{g,1},\BQ).
\end{equation}
For our purpose, it suffices to show that \eqref{eq:int} does not hold for some choices of nondegenerate semismall stability conditions $\phi_1, \phi_2$.

By Lemma \ref{lem:divisor}, there exist unique 
\[
a, b, s, t \in \BQ, \quad \beta, \beta' \in RH^2(\Mbar_{g,1}, \BQ)
\]
such that
\begin{equation*}\label{eq:ab}
f(\Theta) = a\Theta + b\kappa_{0,1} + \beta, \quad f(\kappa_{0,1}) = s\Theta + t\kappa_{0,1} + \beta'.
\end{equation*}
As $k$ and $l$ varies, the relation \eqref{eq:int} imposes constraints on the coefficients $\cc, a, b, s, t$, and the classes $\beta, \beta'$. In the following, we show that these constraints cannot always be satisfied as long as the genus $g$ is at least~$4$. Our main tool is the tautological relations obtained from the theory of universal double ramification cycles \cite{BHPSS} over the universal Picard stack. These relations, as we will review in the next section, impose strong constraints on the classes 
\[
\pi_{i*}(\Theta^k \cup \kappa_{0,1}^l) \in H^*(\Mbar_{g,1}, \BQ).
\]

\subsection{Universal double ramification cycle relations} \label{sec:4.3}

Let $\pic_{g,n}$ be the universal Picard stack over the moduli stack $\FM_{g,n}$ of prestable curves of genus $g$ with $n$ markings. Let $\pic_{g,n,d}$ be the connected component parameterizing line bundles of total degree $d$.

Let $\phi$ be a nondegenerate stability condition of degree $d$. Using the universal admissible line bundle as in Section \ref{sec:4.2}, there exists a morphism to the universal Picard stack
\begin{equation}\label{eq:varphi}
\varphi: \Jbar^{d,\phi}_{g,n}\to \pic_{g,n,d}.
\end{equation}
The tautological divisor classes in Section \ref{sec:4.2} induced by the universal admissible line bundle~$\CL$ are naturally defined on the universal Picard stack, whose pullback along $\varphi$ recover the corresponding classes on the universal fine compactified Jacobians; we will use the same notation to denote these classes on $\pic_{g,n,d}$ and $\Jbar_{g,n}^{d,\phi}$ respectively. The universal double ramification cycle relations are on $\pic_{g,n,d}$; pulling back along \eqref{eq:varphi} yields relations on $\Jbar_{g,n}^{d,\phi}$ for any nondegenerate stability condition $\phi$.

We first recall the universal double ramification cycle introduced in \cite{BHPSS} (see also \cite{BMP}). Let $d$ be a fixed integer. Consider a tuple of integers $\sfa =(a_1,\ldots, a_n) \in \BZ^n$ and $b \in \BZ$ with
\[
\sum_{i} a_i = d + b(2g - 2 + n).
\]
For given integers $\sfa$ and $b$, the \emph{universal double ramification cycle class}
\begin{equation*}
\uniDR_g^c(b;\sfa) \in \Chow^c(\pic_{g,n,d})
\end{equation*}
is a certain algebraic class that governs the Abel--Jacobi theory of families of prestable curves.

We state in the following theorem some properties of the universal double ramification cycles needed in our argument. For $N \in \BZ$, the ``multiplication by $N$'' map
\[[N]:\pic_{g,n,0}\to\pic_{g,n,0}\]
is given by the $N$-th tensor power of the universal line bundle.
\begin{thm}\label{thm:uniDR}
    Let $d \in \BZ$ be fixed. Let $\sfa\in\BZ^n$ and $b\in\BZ$ with $\sum_i a_i = d + b(2g-2+n)$.
    \begin{enumerate}
        \item[(i)] When $d=0$, the pullback
        \[
        [N]^*\uniDR_g^c(b;\sfa)
        \]
        admits an explicit expression as a polynomial in $N$ of degree $2c$.
        \item[(ii)] The class $\uniDR^c_g(b;\sfa)$ admits an explicit expression as a polynomial in $a_2, \ldots, a_n$, and~$b$ of total degree $2c$. Here the variable $a_1$ does not show up due to the relation between the $a_i$ and $b$.
        \item[(iii)] For $c>g$, we have the vanishing
        \[
        \uniDR^c_g(b;\sfa)=0 \in \Chow^c(\pic_{g,n,d}).
        \]
    \end{enumerate}
\end{thm}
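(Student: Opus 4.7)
The plan is to deduce all three parts from the explicit Pixton--type formula for $\uniDR^c_g(b;\sfa)$ constructed in \cite{BHPSS}. Recall that the universal DR cycle on $\pic_{g,n,d}$ is given by a sum over stable graphs of decorated tautological classes; each summand is a monomial in $\psi$-classes, $\kappa$-classes, boundary classes, and the universal class $\xi = c_1(\CL)$, with coefficients that depend polynomially on the weights $(\sfa, b)$.

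First I would establish part (ii) by reading it off directly from Pixton's formula. The weight-dependence is polynomial by construction, and the Gaussian-type shape of the formula --- in which a $\psi$-class power of order $k$ is paired against a weight polynomial of degree $2k$, and boundary contributions contribute in a controlled way --- forces the total weight-degree of each codimension-$c$ summand to be bounded by $2c$. The linear relation $\sum_i a_i = d + b(2g - 2 + n)$ then allows me to eliminate $a_1$, leaving a polynomial in the remaining $n$ variables $(a_2, \ldots, a_n, b)$.

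For part (i), when $d = 0$ the multiplication-by-$N$ map on the universal Picard stack satisfies $[N]^*\CL \simeq \CL^{\otimes N}$, so $[N]^*\xi = N\xi$, while all other tautological classes pulled back from $\FM_{g,n}$ (the $\psi$-, $\kappa$-, and boundary classes) are preserved by $[N]^*$. Feeding this substitution into Pixton's formula yields a polynomial expression in $N$; combining the rescaling $\xi \mapsto N\xi$ with the weight-polynomial structure of part (ii), one may treat $N$ as a scaling of an auxiliary weight variable and read off a polynomial of degree $2c$ in $N$.

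Finally, part (iii) is the universal extension of the Clader--Janda vanishing theorem, established as one of the main results of \cite{BHPSS}. The main obstacle throughout the argument is the detailed combinatorial bookkeeping needed to verify the degree bounds of (i) and (ii) across every stable-graph contribution in Pixton's formula; however, these verifications are term-by-term and require no geometric input beyond the explicit formulas already developed in \cite{BHPSS}.
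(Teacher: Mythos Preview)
Your approach aligns with the paper's, which also derives everything from the cited literature on the universal Pixton formula (\cite{BHPSS} for the formula and for (iii), \cite{Pixton, Sp} for the polynomiality in (ii), and \cite{BMP} for (i)). However, your sketch for (i) contains an error. The boundary classes appearing in the universal Pixton formula on $\pic_{g,n,0}$ are the strata $[\Gamma_\delta]$, indexed by a prestable graph $\Gamma$ \emph{together with} a multidegree $\delta: V(\Gamma) \to \BZ$; these are \emph{not} pulled back from $\FM_{g,n}$, and they are not preserved by $[N]^*$. Since $[N]$ sends a line bundle of multidegree $\delta'$ to one of multidegree $N\delta'$, pulling back $[\Gamma_\delta]$ lands on strata with rescaled multidegree data. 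The polynomial dependence on $N$ only emerges once one knows the formula depends polynomially on this multidegree data --- which is essentially the content of (ii), and is itself nontrivial.

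Relatedly, for (ii) the polynomiality is not ``by construction'': Pixton's formula involves a sum over weightings modulo an auxiliary integer $r$ followed by extraction of a specific $r$-coefficient, so a priori yields only a quasi-polynomial in $(\sfa, b)$. That the result is a genuine polynomial of degree $2c$ is the main point of the separate references \cite{Pixton, Sp} cited by the paper, not a routine bookkeeping exercise. You also do not address how to pass from $d = 0$ to general $d$; the paper does this via the twisting isomorphism $\tau_d: \pic_{g,n,d} \to \pic_{g,n,0}$, $L \mapsto L(-dx_1)$, combined with the functoriality of $\uniDR$ under $\tau_d^*$ from \cite[Section~7.4]{BHPSS}.
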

\begin{proof}
     Part (i) is from \cite[Proposition 6.4]{BMP}. When $d=0$, (ii) is from \cite{Pixton, Sp}. For general $d \in \BZ$, we consider the isomorphism given by twisting by the first marking
    \[
    \tau_d : \pic_{g,n,d} \xrightarrow{\simeq} \pic_{g,n,0}, \quad (C,\{x_i\}, L) \to (C, \{ x_i\}, L(-dx_1)).
    \]
    Then (ii) follows from the functoriality of $\uniDR^c_g(b;\sfa)$ under $\tau_d^*$; see \cite[Section 7.4]{BHPSS}. Part (iii) is from \cite[Theorem 0.8]{BHPSS}.
\end{proof}

We mainly focus on the case $d=0, n = 1$, for which we can refine the relation in Theorem~\ref{thm:uniDR}(iii) as follows. For all $w, m \geq 0$, we have
\begin{equation}\label{eq:refine}
[\uniDR_g^c(b; a_1)]_{\text{weight} = w,\, \deg = m} = 0,
\end{equation}
where the weight $w$ refers to the coefficient of $N^w$, and the degree $m$ is the degree as a polynomial in $b$. The left-hand side of (\ref{eq:refine}) admits an explicit expression in terms of tautological classes. We note that a special case of \eqref{eq:refine} was used in \cite{BMSY}.

We present the explicit forms of some special cases of \eqref{eq:refine}, which will be used in our proof; we refer to \cite[Section~0.3]{BHPSS} for further details. We consider pairs $(\Gamma, \delta)$ consisting of a prestable graph~$\Gamma$ of genus $g$ with $n$ legs, and a degree $\delta: V(\Gamma) \to \BZ$ on the set of vertices of $\Gamma$. Each pair defines a Picard substack $\pic_{\Gamma_\delta}$ with prescribed degeneration, together with a natural morphism 
\[
j_{\Gamma_\delta}: \pic_{\Gamma_\delta} \to \pic_{g,n}
\]
of degree $|\mathrm{Aut}(\Gamma_\delta)|$. We use $[\Gamma_\delta]$ to denote the class given by the pushforward of the fundamental class $(j_{\Gamma_\delta})_*1$. For a fine compactified Jacobian $\Jbar^{d,\phi}_{g,n}$ with a $\phi$-stable multidegree $\delta$ on a quasistable graph $\Gamma$, let $\Jbar^{d,\phi}_{\Gamma_\delta}$ denote the pullback of $\pic_{\Gamma_\delta}$ along \eqref{eq:varphi}. 

Partial normalization along the edges of $\Gamma$ induces a morphism
\begin{equation}\label{eq:pn}
    \pic_{\Gamma_\delta}\to\prod_{v\in V(\Gamma)}\pic_{g(v),n(v),\delta(v)}\,.
\end{equation}
We also need to consider generalizations of the classes $[\Gamma_\delta]$; they are natural classes of the form
\[
[\Gamma_\delta, \alpha]
\]
given by the stratum $\Gamma_\delta$ decorated by a tautological class $\alpha$ pulled back from the factors $\pic_{g(v),n(v),\delta(v)}$. These generalized boundary classes appear as terms of the double ramification cycle relations.

As an example of (\ref{eq:refine}), the relation 
\[
[\uniDR^{g+1}_g(b; a_1)]_{\mathrm{weight}=2g+1,\,\deg=1}=0
\]
on the Picard stack $\pic_{g,1,0}$ is of the form
\begin{equation}\label{eq:rel2}
\begin{aligned}
    \frac{\Theta^{g}\cup\kappa_{0,1}}{g!} ={} & {- \frac{\Theta^g}{g!}} \cup \sum_{g_1+g_2 = g} \sum_{d+d'=0} d(2g_1-1) \Big[\begin{tikzpicture}[baseline=-0.1 cm, vertex/.style={circle,draw,font=\Large,scale=0.5, thick}]
    \node[vertex] (A) at (-1.5,0) [label=below:$g_2$] {$d'$};
    \node[vertex] (B) at (0,0) [label=below:$g_1$] {$d$};
    \draw[thick] (A) to (B);
    \draw[thick] (A) to (-2, 0) node[left] {$1$};
    \end{tikzpicture}\Big]\\
    &+\frac{\Theta^{g-1}}{(g-1)!}\cup \sum_{g_1+g_2=g-1} \sum_{d+d'=0}\frac{d^3g_1}{6} \Big[\begin{tikzpicture}[baseline=-0.1 cm, vertex/.style={circle,draw,font=\Large,scale=0.5, thick}]
    \node[vertex] (A) at (-1.5,0) [label=below:$g_2$] {$d'$};
    \node[vertex] (B) at (0,0) [label=below:$g_1$] {$d$};
    \draw[thick] (A) to[bend left] (B);
    \draw[thick] (A) to[bend right] (B);
    \draw[thick] (A) to (-2, 0) node[left] {$1$};
    \end{tikzpicture}\Big]\\ 
    &+\text{(other terms)}.
\end{aligned}
\end{equation}
Here the number $\delta(v)$ at a vertex $v$ in the graphs above refers to the assigned degree, and the number $g_i$ below the vertex is the genus of the vertex. The infinite sum becomes finite after pulling back the relation along \eqref{eq:varphi}. 


\begin{rmk}
    In the formula (\ref{eq:refine}), we only explicitly wrote out the
    main terms of the identity.  Every term of the identity is of the form
    \[
    \mathrm{coefficient} \cdot \Theta^{k} \cup [\Gamma_\delta, \alpha].
    \]
    As we discussed in Section \ref{sec:4.2}, our main application of the universal double ramification cycle relations is to handle the pushforward of monomials of $\Theta$ and $\kappa_{0,1}$ to $\Mbar_{g,1}$. After pulling back the relation via (\ref{eq:varphi}) and pushing forward to $\Mbar_{g,1}$, many terms vanish for dimension reasons.  Namely, if the stabilization of the graph $\Gamma$ has higher codimension than the pushforward class, then the corresponding term will vanish. In particular, these terms are not relevant for our arguments so we use \emph{other terms} to denote them for notational convenience. This convention is applied throughout Section \ref{sec:4} for all universal double ramification cycle relations.
\end{rmk}


We recall some further relations on the stacks
\[
\pic_{g,1,0}, \quad \pic_{g,2,d}, \quad \pic_{g,3,d}
\]
given by Theorem~\ref{thm:uniDR}(iii); although we mainly consider the case $d = 0, n = 1$ as in (\ref{eq:rel2}), we need the other two stacks to handle boundary strata.

\medskip
{\bf \noindent $\bullet$ On $\pic_{g,1,0}$:} we first define
\[
\widetilde{\kappa}_1 := \frac{1}{2}\left( {-\kappa_1} + (2g-1)^2\psi_1 - \sum_{\substack{g_1 + g_2 = g \\ g_1 , g_2\ge 1}} (2g_1 - 1)^2 \Big[\begin{tikzpicture}[baseline=-0.1 cm, vertex/.style={circle,draw,font=\Large,scale=0.5, thick}]
    \node[vertex] (A) at (-1.5,0) [label=below:$g_2$] {};
    \node[vertex] (B) at (0,0) [label=below:$g_1$] {};
    \draw[thick] (A) to (B);
    \draw[thick] (A) to (-2, 0) node[left] {$1$};
    \end{tikzpicture}\Big] \right) \in \Chow^1(\mathfrak{M}_{g,1}).
    \]
    Here $\mathfrak{M}_{g,1}$ is the moduli stack of prestable curves with $1$ marking, and $\kappa_1,\psi_1$ and the boundary strata (appeared on the right-hand side) are standard tautological classes on $\mathfrak{M}_{g,1}$ as in \cite{BS1}. The relation
\[
[\uniDR^{g+1}_g(b;a_1)]_{\mathrm{weight}=2g,\,\deg =2} = 0
\]
is of the form 
\begin{equation}\label{eq:rel22}
    \begin{aligned}
    \frac{\Theta^{g-1} \cup \kappa_{0,1}^2}{(g-1)!2!} ={} & {-\frac{\Theta^g\cup\widetilde{\kappa}_1}{g!}}\\
    &+ \frac{\Theta^{g-1}}{(g-1)!} \cup \sum_{g_1+g_2=g-1} \sum_{d+d'=0}\frac{6d^2(2g_1)^2}{48} \Big[\begin{tikzpicture}[baseline=-0.1 cm, vertex/.style={circle,draw,font=\Large,scale=0.5, thick}]
    \node[vertex] (A) at (-1.5,0) [label=below:$g_2$] {$d'$};
    \node[vertex] (B) at (0,0) [label=below:$g_1$] {$d$};
    \draw[thick] (A) to[bend left] (B);
    \draw[thick] (A) to[bend right] (B);
    \draw[thick] (A) to (-2, 0) node[left] {$1$};
    \end{tikzpicture}\Big]\\
    &+ \frac{\Theta^{g-2}\cup\kappa_{0,1}}{(g-2)!} \cup \sum_{g_1+g_2=g-1} \sum_{d+d'=0}\frac{d^3(2g_1)}{12} \Big[\begin{tikzpicture}[baseline=-0.1 cm, vertex/.style={circle,draw,font=\Large,scale=0.5, thick}]
    \node[vertex] (A) at (-1.5,0) [label=below:$g_2$] {$d'$};
    \node[vertex] (B) at (0,0) [label=below:$g_1$] {$d$};
    \draw[thick] (A) to[bend left] (B);
    \draw[thick] (A) to[bend right] (B);
    \draw[thick] (A) to (-2, 0) node[left] {$1$};
    \end{tikzpicture}\Big]\\
    & + \frac{\Theta^{g-2}}{(g-2)!}\cup \sum_{g_1+g_2=g-1} \sum_{d+d'=0}\frac{d^4(2g_1)^2}{32} \Big[\begin{tikzpicture}[baseline=-0.1 cm, vertex/.style={circle,draw,font=\Large,scale=0.5, thick}]
    \node[vertex] (A) at (-1.5,0) [label=below:$g_2$] {$d'$};
    \node[vertex] (B) at (0,0) [label=below:$g_1$] {$d$};
    \draw[thick] (A) to[bend left] node[midway, above]{$\psi_h+\psi_{h'}$}  (B);
    \draw[thick] (A) to[bend right] (B);
    \draw[thick] (A) to (-2, 0) node[left] {$1$};
    \end{tikzpicture}\Big] \\
    &+ \text{(other terms)}.
\end{aligned}    
\end{equation}
Here in the fourth term on the right-hand side, we use $e = (h, h')$ to denote an edge, and $\psi_h, \psi_{h'}$ are the $\psi$-classes at the corresponding half-edges.

\medskip
{\bf \noindent $\bullet$ On $\pic_{g,2,d}$:} the relation
\[
[\uniDR^{g+1}_g (b; 0, 2gb + d)]_{\mathrm{deg} = 1} = 0
\]
is of the form
\begin{equation}\label{eq:rel3}
\begin{aligned}
    \frac{\Theta^{g}\cup \kappa_{0,1}}{g!} ={}& 2g\cdot\frac{\Theta^g\cup \xi_2}{g!} \\
    &+ 2gd\cdot \frac{\Theta^g\cup \psi_2}{g!} \\
    &+ \frac{\Theta^g}{g!} \cup \sum_{g_1+g_2=g} \sum_{e+e' = d} -2g_1e \cdot \Big[\begin{tikzpicture}[baseline=-0.1 cm, vertex/.style={circle,draw,font=\Large,scale=0.5, thick}]
    \node[vertex] (A) at (-1.5,0) [label=below:$g_1$] {$e$};
    \node[vertex] (B) at (0,0) [label=below:$g_2$] {$e'$};
    \draw[thick] (A) to (B);
    \draw[thick] (A) to (-2, 0) node[left] {$1$};
    \draw[thick] (B) to (0.5,0) node[right]{$2$};
    \end{tikzpicture}\Big]\\
    & + \text{(other terms)}.
\end{aligned}
\end{equation}

\medskip
{\bf \noindent $\bullet$ On $\pic_{g,3,d}$:} the relation  
\[
[\uniDR_g^{g+1}(0;a,d-a,0)]_{\mathrm{deg} = 1} = 0
\]
is of the form
\begin{equation}\label{eq:rel4}
\begin{aligned}
    \frac{\Theta^g \cup (\xi_2- \xi_1)}{g!} ={} & \frac{\Theta^g}{g!}\cup \sum_{g_1+g_2=g} \sum_{e+e'=d} e \cdot \Big[\begin{tikzpicture}[baseline=-0.1 cm, vertex/.style={circle,draw,font=\Large,scale=0.5, thick}]
    \node[vertex] (A) at (-1.5,0) [label=below:$g_1$] {$e$};
    \node[vertex] (B) at (0,0) [label=below:$g_2$] {$e'$};
    \draw[thick] (A) to (B);
    \draw[thick] (A) to (-2, 0) node[left] {$1$};
    \draw[thick] (B) to (0.5,0) node[right]{$2$};
    \draw[thick] (A) to (-1.5,0.5) node[left] {$3$};
    \end{tikzpicture}\Big]\\
    &+ \frac{\Theta^g}{g!} \cup \sum_{g_1+g_2=g} \sum_{e+e'=d} (d-e) \cdot \Big[\begin{tikzpicture}[baseline=-0.1 cm, vertex/.style={circle,draw,font=\Large,scale=0.5, thick}]
    \node[vertex] (A) at (-1.5,0) [label=below:$g_1$] {$e$};
    \node[vertex] (B) at (0,0) [label=below:$g_2$] {$e'$};
    \draw[thick] (A) to (B);
    \draw[thick] (A) to (-2, 0) node[left] {$2$};
    \draw[thick] (B) to (0.5,0) node[right]{$1$};
    \draw[thick] (A) to (-1.5,0.5) node[left] {$3$};
    \end{tikzpicture}\Big]\\
    &+ \text{(other terms)},
\end{aligned}    
\end{equation}
and the relation
\[
[\uniDR^{g+1}_g(0;a,d-a,0)]_{\mathrm{deg}= 0} = 0
\]
is of the form
\begin{equation}\label{eq:rel5}
    \frac{\Theta^{g+1}}{(g+1)!} = -\frac{\Theta^g}{g!}\cup \left(d \xi_2+\frac{d^2}{2}\psi_2\right) +\text{(other terms)}.
\end{equation}

\medskip

We conclude this section by recalling a result of \cite{BMP} where a closed formula for the pushforward of monomials of divisors is obtained when the perversities of the divisors sum exactly to $2g$; these classes are presented in terms of the double ramification cycle formulas over the moduli of stable curves $\Mbar_{g,n}$ \cite{JPPZ}. Let $\sfa \in \BZ^n$ and $b \in \BZ$ with $\sum_i a_i = b(2g-2+n)$. The trivial line bundle induces a map $e : \Mbar_{g,n} \to \mathfrak{Pic}_{g,n,0}$. The double ramification cycle formula is then defined by
\[
\dr^c_g(b;\sfa) = e^* \uniDR_g^c(b;\sfa).
\]
By Theorem \ref{thm:uniDR}(ii), this is a polynomial in $a_2, \ldots, a_n$, and $b$; we use
\[
[\dr^c_g(b;\sfa)]_{b^m a_2^{k_2}\cdots a_n^{k_n}}
\]
to denote the coefficient of the monomial $b^m a_2^{k_2} \cdots a_n^{k_n}$ of $\dr^c_g(b ;\sfa)$. We refer to \cite{JPPZ} for explicit formulas for these classes.

\begin{thm}[{\cite[Theorem 1.1]{BMP}}]\label{thm:bmp} 
Let $\pi : \Jbar_{g,n}^{d,\phi}\to \Mbar_{g,n}$ be the universal fine compactified Jacobian associated with a nondegenerate stability condition $\phi$ where the universal line bundle is trivialized along the first marking. Let $l,m, k_2, \cdots, k_n$ be nonnegative integers with
\[
2l + m + \sum_{i=2}^n k_i = 2g.\]
Then we have 
\[
\pi_*\left( \frac{\Theta^l}{l !} \cup \frac{(-\kappa_{0,1})^m}{m!} \cup \prod_{i=2}^n\frac{\xi^{k_i}}{k_i!} \right) = (-1)^{g-l} [\dr^c_g(b;\sfa)]_{b^m a_2^{k_2}\cdots a_n^{k_n}};
\]
in particular, the pushforward is independent of the stability condition $\phi$.    
\end{thm}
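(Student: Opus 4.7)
The plan is to derive the identity from the universal double ramification cycle relations on the Picard stack $\pic_{g,n,d}$, pulled back to $\Jbar_{g,n}^{d,\phi}$ via $\varphi$ of \eqref{eq:varphi}, combined with the pushforward vanishing of low-perversity classes (Lemma \ref{lem:lowp}). The independence of $\phi$ should be automatic, since the universal DR cycle lives on $\pic_{g,n,d}$ before any compactification is chosen.

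Set $c = g - l$, so that the hypothesis $2l + m + \sum_{i=2}^n k_i = 2g$ reads $m + \sum_{i=2}^n k_i = 2c$; thus the monomial $b^m a_2^{k_2}\cdots a_n^{k_n}$ has maximal combined degree in the polynomial $\uniDR_g^c(b;\sfa)$ of Theorem \ref{thm:uniDR}(ii). The first step is to use the explicit (Pixton-type) formula for $\uniDR_g^c(b;\sfa)$ on $\pic_{g,n,d}$ and pull it back along $\varphi$. Extracting the coefficient of $b^m a_2^{k_2}\cdots a_n^{k_n}$, one decomposes the resulting class into an ``interior'' (trivial-graph) part equal to
\[
(-1)^{g-l}\,\frac{\Theta^l}{l!}\cup\frac{(-\kappa_{0,1})^m}{m!}\cup\prod_{i=2}^n\frac{\xi_i^{k_i}}{k_i!},
\]
plus a sum of ``boundary'' terms supported on proper strata $\Jbar^{d,\phi}_{\Gamma_\delta}$, each decorated by monomials in $\Theta,\kappa_{0,1},\xi_i,\psi_i$ pulled back from the vertex Picard factors via \eqref{eq:pn}.

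The second step is to apply $\pi_*$. On one hand, $\pi_*\varphi^*\uniDR_g^c(b;\sfa)$ equals the classical double ramification cycle $\dr_g^c(b;\sfa)$ on $\Mbar_{g,n}$, by functoriality of the universal DR cycle under the trivial line bundle section $e:\Mbar_{g,n}\to\pic_{g,n,0}$ (after twisting by the first marking to reach degree $d$). On the other hand, for every non-interior term, each extra factor of $\kappa_{0,1}$ or $\xi_i$ carries strong perversity $1$ by Proposition \ref{pro:kappa}, and a vertex-by-vertex bookkeeping using the partial normalization \eqref{eq:pn} should show that the total perversity stays strictly below $2g$. Hence Lemma \ref{lem:lowp} will kill all boundary contributions. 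Matching the coefficients of $b^m a_2^{k_2}\cdots a_n^{k_n}$ on both sides then yields the claimed equality.

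The main obstacle will be the perversity bound for the boundary terms: for each prestable graph $\Gamma$ with $\phi$-stable multidegree $\delta$ and at least one edge, one has to argue that the tautological class on $\Jbar^{d,\phi}_{\Gamma_\delta}$ produced by Pixton's formula sits in $P_{<2g}H^*(\Jbar_{g,n}^{d,\phi},\BQ)$. This is a combinatorial check: each edge of $\Gamma$ increases the ambient codimension on $\Jbar_{g,n}^{d,\phi}$ by one, but contributes to the horizontal perversity on $\Mbar_{g,n}$ without a matching gain on the fiber direction, so the only way to saturate perversity $2g$ is via $\Theta$-powers with no edge contractions. Once this strict inequality is established for every graph with a nontrivial edge set, the proof goes through, and the right-hand side is visibly independent of $\phi$ since $\dr_g^c(b;\sfa)$ is defined on $\Mbar_{g,n}$ without any reference to a stability condition.
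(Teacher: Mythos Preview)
The paper does not give its own proof of this theorem; it is quoted verbatim from \cite[Theorem~1.1]{BMP} and used as a black box. So there is no in-paper argument to compare against. That said, your proposed argument has a genuine gap.

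The core problem is a codimension mismatch. You set $c=g-l$ and work with $\uniDR_g^c(b;\sfa)$, a class of codimension $c$ on $\pic_{g,n,d}$; its pullback $\varphi^*\uniDR_g^c$ therefore lies in $\Chow^c(\Jbar_{g,n}^{d,\phi})$. But the monomial $\frac{\Theta^l}{l!}\cup\frac{(-\kappa_{0,1})^m}{m!}\cup\prod_i\frac{\xi_i^{k_i}}{k_i!}$ has codimension $l+m+\sum k_i = 2g-l = g+c$, not $c$. So this monomial cannot be the trivial-graph piece of (any coefficient of) $\uniDR_g^c$, and your ``Step~1'' decomposition is impossible as stated. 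The same mismatch kills Step~2: since $\pi$ has relative dimension $g$, $\pi_*\varphi^*\uniDR_g^c$ lands in $\Chow^{c-g}(\Mbar_{g,n})$, which is zero whenever $l>0$, whereas $\dr_g^c(b;\sfa)\in\Chow^c(\Mbar_{g,n})$ is typically nonzero. So the asserted identity $\pi_*\varphi^*\uniDR_g^c=\dr_g^c$ is false on the nose.

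If you instead try $\uniDR_g^{g+c}$ (which does vanish for $c>0$ and has the right codimension), the trivial-graph part is not a single monomial but a full polynomial in $\Theta,\kappa_{0,1},\xi_i,\psi_i,\kappa_1$ of codimension $g+c$, and the boundary pieces do not all lie in $P_{<2g}$; the perversity bookkeeping you sketch does not go through. The actual argument in \cite{BMP} uses the Fourier transform and Abel--Jacobi sections on $\Jbar_{g,n}^{d,\phi}$ (this is already hinted at by the fact that Proposition~\ref{pro:kappa} is \cite[Proposition~8.7]{BMP}), rather than a direct extraction from Pixton's formula.
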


\subsection{Constraints}

In this section, we assume the existence of the ring isomorphism (\ref{eq:isom}) in Section \ref{sec:new4.4}, so that the relation \eqref{eq:int} holds for $\phi_1, \phi_2$. We derive constraints on $\cc, a, b, s, t \in \BQ$, and $\beta, \beta' \in RH^2(\Mbar_{g,1}, \BQ)$.

\begin{lem}\label{lem:a}
   We have 
    \[
    a^g = \cc \neq 0,\quad  s = 0.
    \]
\end{lem}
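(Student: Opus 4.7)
The plan is to extract both identities from the relation \eqref{eq:int} by choosing $k, l$ so that the multiplicativity of the perverse filtration forces all but one term in the multinomial expansion of $f(\Theta)^k \cup f(\kappa_{0,1})^l$ to push forward to zero, leaving a single explicit term. The inputs I need are: by Proposition \ref{pro:kappa} and multiplicativity (Theorem \ref{thm:mult}), the classes $\Theta$, $\kappa_{0,1}$, and any class pulled back from $\Mbar_{g,1}$ have strong perversities $2$, $1$, and $0$ respectively, so a multinomial summand $(a\Theta)^i (b\kappa_{0,1})^j \beta^k$ of $f(\Theta)^N$ sits in $P_{2i+j}$; by Lemma \ref{lem:lowp}, any class in $P_{2g-1}$ pushes forward to zero on $\Mbar_{g,1}$; and by Theorem \ref{thm:bmp} applied with $l = g$, $m = 0$, $n = 1$ (where the relevant double ramification cycle $\dr^0_g$ is just the fundamental class), I obtain $\pi_{1*}(\Theta^g) = g!$, independent of the stability condition.

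For $a^g = \cc$, I would apply \eqref{eq:int} with $k = g$, $l = 0$, yielding $\cc \cdot g! = \pi_{2*}(f(\Theta)^g)$. Expanding $f(\Theta)^g = (a\Theta + b\kappa_{0,1} + \beta)^g$ multinomially, a summand indexed by $(i, j, k)$ with $i + j + k = g$ lies in $P_{2i+j}$, and the bound $2i + j \leq i + (i + j) \leq 2g$ is saturated only when $i = g$ (forcing $j = k = 0$). Hence every other summand lies in $P_{2g - 1}$ and vanishes under $\pi_{2*}$, leaving $\pi_{2*}(a^g \Theta^g) = a^g \cdot g!$; this forces $a^g = \cc$, and since $\cc \in \BQ^\times$ we also get $a \neq 0$.

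For $s = 0$, I would apply \eqref{eq:int} instead with $k = g - 1$, $l = 1$. The left-hand side vanishes by \eqref{ThetaKappa} and Lemma \ref{lem:lowp}, since $\Theta^{g-1} \cup \kappa_{0,1} \in P_{2g-1}$. On the right-hand side, every term in the expansion of $f(\Theta)^{g-1} \cup f(\kappa_{0,1})$ has the shape $(a\Theta)^i(b\kappa_{0,1})^j\beta^k \cdot \eta$ with $i + j + k = g - 1$ and $\eta \in \{s\Theta,\, t\kappa_{0,1},\, \beta'\}$, and thus sits in $P_{2i+j+\mathrm{pv}(\eta)}$ where $\mathrm{pv}(s\Theta) = 2$, $\mathrm{pv}(t\kappa_{0,1}) = 1$, $\mathrm{pv}(\beta') = 0$. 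From $2i + j \leq 2g - 2$ I see that only the single combination $(i, j, k) = (g - 1, 0, 0)$ together with $\eta = s\Theta$ can reach perversity $2g$, so the right-hand side reduces to $s\, a^{g-1}\, \pi_{2*}(\Theta^g) = s\, a^{g-1} \cdot g!$. Equating with $0$ and using $a \neq 0$ gives $s = 0$.

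The main bookkeeping point to verify carefully is that $\pi^*$-pulled-back classes from $\Mbar_{g,1}$ sit in $P_0$ (which they do, as they are captured by the bottom summand of the BBD decomposition of $\pi_*\BQ_{\Jbar^{0,\phi}_{g,1}}$) and that the normalization $\pi_{1*}(\Theta^g) = g!$ comes out of Theorem \ref{thm:bmp} with the correct sign and factorial; once these are in place, the argument reduces to a mechanical application of multiplicativity and Lemma \ref{lem:lowp}.
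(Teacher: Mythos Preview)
Your proof is correct and follows essentially the same strategy as the paper: use \eqref{eq:int} with a suitable $(k,l)$, expand multinomially, and kill all but one term via the perversity bound \eqref{ThetaKappa} together with Lemma~\ref{lem:lowp}. For $s=0$ the paper instead takes $(k,l)=(0,g)$, obtaining $0=\cc\cdot\pi_{1*}(\kappa_{0,1}^g)=\pi_{2*}((s\Theta+t\kappa_{0,1}+\beta')^g)=s^g\,g!$ directly, without invoking $a\neq 0$ from the first part; your choice $(k,l)=(g-1,1)$ works equally well.
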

\begin{proof}
    Since $\pi_{i*}(\Theta^g)=g!$, we have 
    \[
    \cc \cdot g!=\pi_{1*}(\Theta^g)=\pi_{2*}((a\Theta+b\kappa_{0,1}+\beta)^g)=a^gg!
    \]
    where the second equality follows from (\ref{eq:int}) and the last equality follows from the perversity bound of Proposition \ref{pro:kappa} and the vanishing given by Lemma \ref{lem:lowp}. We conclude that $a^g = \cc$.

    By a similar argument using the perversity bound of Proposition \ref{pro:kappa}, we have
    \[
    0 = \cc \cdot \pi_{1*}( \kappa_{0,1}^g ) = \pi_{2*}( (s\Theta + t\kappa_{0,1} + \beta')^g) = s^g g!.
    \]
    Therefore $s=0$.
\end{proof}


\begin{lem}\label{lem:v}
    If $g\ge 2$, we have
    \[
    a = t^2.
    \]
\end{lem}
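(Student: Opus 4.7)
The strategy is to apply the compatibility relation \eqref{eq:int} with $(k, l) = (g-1, 2)$ and isolate the unique top-perversity contribution on the right-hand side. Using $s = 0$ from Lemma~\ref{lem:a}, I would expand
\[
f(\Theta)^{g-1} \cup f(\kappa_{0,1})^2 \,=\, (a\Theta + b\kappa_{0,1} + \beta)^{g-1} \cup (t\kappa_{0,1} + \beta')^2,
\]
and observe that every monomial in this expansion has the form $\Theta^i\kappa_{0,1}^j \cdot \gamma$, where $\gamma$ is pulled back from $\Mbar_{g,1}$ and the exponents obey $i \leq g-1$, $i + j \leq g+1$. By the strong perversity of $\Theta$ and $\kappa_{0,1}$ (Proposition~\ref{pro:kappa}) combined with Lemma~\ref{lem:lowp} and the projection formula, $\pi_{2*}(\Theta^i\kappa_{0,1}^j)\cdot\gamma$ vanishes unless $2i + j \geq 2g$. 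A short case analysis using the two exponent bounds forces $(i, j) = (g-1, 2)$ (equivalently, only taking $a\Theta$ from each of the $g-1$ factors and $t\kappa_{0,1}$ from each of the $2$ factors), so the unique surviving contribution is $a^{g-1}t^2\, \Theta^{g-1}\kappa_{0,1}^2$.

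Invoking Theorem~\ref{thm:bmp}, the class $\Omega := \pi_*(\Theta^{g-1}\kappa_{0,1}^2) \in H^2(\Mbar_{g,1},\BQ)$ is independent of the nondegenerate stability condition, so both sides of \eqref{eq:int} for $(k,l) = (g-1, 2)$ are controlled by $\Omega$ and we obtain
\[
\cc \cdot \Omega \,=\, a^{g-1} t^2 \cdot \Omega.
\]
Assuming $\Omega \neq 0$, this gives $\cc = a^{g-1}t^2$; combined with $\cc = a^g$ and $a \neq 0$ from Lemma~\ref{lem:a}, one concludes $a = t^2$.

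The main obstacle is therefore to establish $\Omega \neq 0$ in $H^2(\Mbar_{g,1},\BQ)$ for $g \geq 2$. For this I would pull the universal DR relation \eqref{eq:rel22} back from $\pic_{g,1,0}$ to $\Jbar^{0,\phi}_{g,1}$ along \eqref{eq:varphi} and push forward via $\pi$. Using $\pi_*(\Theta^g/g!) = 1$, the leading term $-\Theta^g\tilde{\kappa}_1/g!$ contributes $-\tilde{\kappa}_1$ on $\Mbar_{g,1}$; by the definition of $\tilde{\kappa}_1$, this supplies a $(\kappa_1, \psi_1)$-component equal to $\tfrac{1}{2}\kappa_1 - \tfrac{(2g-1)^2}{2}\psi_1$ modulo classes supported on the boundary divisors of $\Mbar_{g,1}$. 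The remaining banana-graph terms in \eqref{eq:rel22} likewise push forward to boundary-supported classes in $H^2(\Mbar_{g,1}, \BQ)$. For $g \geq 2$, the classes $\kappa_1$ and $\psi_1$ are linearly independent modulo boundary divisors in $H^2(\Mbar_{g,1}, \BQ)$, so the nonzero $(\kappa_1, \psi_1)$-component cannot be absorbed into the boundary contribution, forcing $\Omega \neq 0$. The delicate part of the argument is the careful bookkeeping of all banana-graph pushforwards into $H^2(\Mbar_{g,1},\BQ)$, together with accommodating Mumford-type Picard relations in low genus if one wishes to treat $g = 2$ uniformly.
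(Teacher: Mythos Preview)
Your reduction is exactly the paper's: apply \eqref{eq:int} with $(k,l)=(g-1,2)$, kill all lower-perversity terms via Proposition~\ref{pro:kappa} and Lemma~\ref{lem:lowp}, invoke Theorem~\ref{thm:bmp} for the $\phi$-independence of $\Omega=\pi_*(\Theta^{g-1}\kappa_{0,1}^2)$, and conclude $a=t^2$ provided $\Omega\neq 0$. The only substantive divergence is in how you verify $\Omega\neq 0$.

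The paper does this more directly. Theorem~\ref{thm:bmp}, which you already cite for $\phi$-independence, in fact \emph{identifies} $\Omega$ with a nonzero multiple of $[\dr^1_g(b;a_1)]_{b^2}$; there is no need to pass through the codimension $g+1$ relation \eqref{eq:rel22} or to track banana-graph pushforwards. Restricting to the open locus $\CM_{g,1}$ gives $[\dr^1_g(b;a_1)]_{b^2}=\tfrac12(-\kappa_1+(2g-1)^2\psi_1)$, and then pushing forward along the forgetful map $p:\CM_{g,1}\to\CM_g$ yields the number $8g(g-1)^2\neq 0$. This is uniform in $g\ge 2$ and avoids any appeal to the Picard group of $\CM_{g,1}$.

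Your route works, but the step ``$\kappa_1$ and $\psi_1$ are linearly independent modulo boundary for $g\ge 2$'' is false at $g=2$: on $\CM_{2,1}$ one has $\kappa_1=\psi_1$ (since $\kappa_1=0$ on $\CM_2$ by Mumford's relation). The specific combination $\tfrac12\kappa_1-\tfrac{9}{2}\psi_1=-4\psi_1$ is still nonzero there, so the conclusion survives, but the argument as written does not; the paper's push-forward-to-$\CM_g$ trick sidesteps this entirely.
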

\begin{proof}
    We first prove that $[\dr^1_g(b;a_1)]_{b^2}$ is a nonzero class in $H^2(\Mbar_{g,1},\BQ)$. Restricting to $\CM_{g,1}\subset \Mbar_{g,1}$, we have
    \begin{equation*}\label{eq:dr11}
        [\dr^1_g(b;a_1)]_{b^2} = \frac{1}{2}(-\kappa_1 + (2g-1)^2\psi_1).
    \end{equation*}
    Let $p : \CM_{g,1} \to \CM_g$ be the forgetful map. Then we have 
    \[
    p_*(-\kappa_1 + (2g-1)^2 \psi_1) = 8g(g-1)^2 \cdot [\CM_{g}] \neq 0 \in H^0(\CM_g ,\BQ).
    \]
Therefore, we deduce from Theorem \ref{thm:bmp} that
    \begin{equation} \label{eq:dr1}
        \pi_{1*}\big( \Theta^{g-1} \cup \kappa_{0,1}^2 \big) = \pi_{2*}( \Theta^{g-1} \cup \kappa_{0,1}^2 )  \neq 0.
    \end{equation}

     On the other hand, by \eqref{eq:int} combined with \eqref{ThetaKappa}, we obtain that
    \[
    \cc \cdot \pi_{1*}(\Theta^{g-1} \cup \kappa_{0,1}^2) = \pi_{2*}( (a\Theta + b\kappa_{0,1} + \beta)^{g-1} \cup (t \kappa_{0,1} + \beta')^2) = a^{g-1} t^2 \cdot  \pi_{2*}(\Theta^{g-1}\cup \kappa_{0,1}^2),
    \]
    from which we deduce
    \[
    a^g = \cc  = a^{g-1}t^2.
    \]
Here we applied Lemma \ref{lem:a} and \eqref{eq:dr1}. The desired identity is concluded.
\end{proof}

\begin{lem}\label{lem:beta}
    If $g\ge 2$, we have
    \[
    \beta' = \frac{2b}{t}[\dr^1_g(b;a_1)]_{b^2}.
    \]
\end{lem}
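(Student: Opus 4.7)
The strategy is to apply the isomorphism constraint \eqref{eq:int} for $(k,l) = (g,1)$:
\[
\cc \cdot \pi_{1*}(\Theta^g \cup \kappa_{0,1}) = \pi_{2*}\bigl(f(\Theta)^g \cup f(\kappa_{0,1})\bigr) \in H^2(\Mbar_{g,1},\BQ).
\]
Expanding $f(\Theta) = a\Theta + b\kappa_{0,1} + \beta$ and $f(\kappa_{0,1}) = t\kappa_{0,1}+\beta'$ by the multinomial theorem, and pulling out the base classes $\beta, \beta'$ via the projection formula, converts the right-hand side into a linear combination of pushforwards $\pi_{2*}(\Theta^i \cup \kappa_{0,1}^j)$. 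Proposition \ref{pro:kappa} combined with Lemma \ref{lem:lowp} forces these to vanish unless $2i + j \geq 2g$; the balanced case $2i+j = 2g$ is handled by Theorem \ref{thm:bmp}, giving in particular $\pi_*(\Theta^g) = g!$ and $\pi_*(\Theta^{g-1}\cup\kappa_{0,1}^2) = -2(g-1)!\,[\dr^1_g(b;a_1)]_{b^2}$. The only term with $2i+j > 2g$ that is not killed off \emph{a priori} is $\pi_{2*}(\Theta^g \cup \kappa_{0,1})$ itself.

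After substituting $\cc = a^g$ (Lemma \ref{lem:a}) and $a = t^2$ (Lemma \ref{lem:v}), the constraint rearranges to
\[
g!\,\beta' \;=\; \pi_{1*}(\Theta^g \cup \kappa_{0,1}) - t\cdot \pi_{2*}(\Theta^g \cup \kappa_{0,1}) + \frac{2g!\,b}{t}\,[\dr^1_g(b;a_1)]_{b^2}.
\]
Hence the lemma reduces to proving that $\pi_{i*}(\Theta^g \cup \kappa_{0,1}) = 0$ in $H^2(\Mbar_{g,1},\BQ)$ for every nondegenerate semismall stability condition $\phi_i$.

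For this vanishing, I would pull back the universal double ramification relation \eqref{eq:rel2} along $\varphi\colon \Jbar^{0,\phi_i}_{g,1}\to \pic_{g,1,0}$ and push forward via $\pi_i$. The first sum runs over treelike one-edge graphs $\Gamma^{(1)}$: by semismallness the unique $\phi_i$-stable multidegree on each such treelike stratum is $(0,0)$, while the coefficient $d(2g_1-1)$ vanishes identically at $d = 0$; thus the pullback of the entire first sum is zero. Every remaining contribution --- the explicit $\Gamma^{(2)}$-sum as well as the ``other terms'' suppressed by the convention following \eqref{eq:rel5} --- is supported on boundary strata whose stabilization in $\Mbar_{g,1}$ has codimension $\geq 2$. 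Their pushforwards therefore realize codimension-$1$ cycle classes in $\Chow^1(\Mbar_{g,1})\simeq H^2(\Mbar_{g,1},\BQ)$ supported on codimension-$\geq 2$ loci, and are therefore zero. This gives $\pi_{i*}(\Theta^g \cup \kappa_{0,1}) = 0$, completing the proof of the formula for $\beta'$.

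The main obstacle will be the bookkeeping in the expansion of $f(\Theta)^g \cup f(\kappa_{0,1})$ and the careful use of the perversity constraints to isolate exactly the three surviving terms. A subordinate subtlety is confirming that the ``other terms'' in \eqref{eq:rel2} really are absorbed by the codimension convention --- i.e., that no codimension-$1$ boundary graph outside the first sum (for instance a non-separating self-loop) contributes with nonvanishing coefficient after pullback to a $\phi_i$-stable locus. This is guaranteed by the structural form of Pixton's formula together with the semismall assumption.
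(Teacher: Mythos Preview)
Your approach is essentially the same as the paper's. One imprecision: in arguing that $\pi_{i*}(\Theta^g \cup \kappa_{0,1}) = 0$, you assert that the entire second (two-edge) sum in \eqref{eq:rel2} is supported on strata whose stabilization in $\Mbar_{g,1}$ has codimension $\geq 2$. This fails for the $g_1 = 0$ term: the genus-$0$ vertex there is unstable (valence $2$), and its stabilization is the irreducible nodal divisor $\delta_0$, which has codimension $1$. The paper handles this directly: by dimension considerations only the $g_1 = 0$ term could survive, but its coefficient $\tfrac{d^3 g_1}{6}$ vanishes at $g_1 = 0$. Your closing paragraph about a ``subordinate subtlety'' gestures toward this case, but you should make the $g_1 = 0$ vanishing explicit rather than appealing vaguely to ``the structural form of Pixton's formula.''
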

\begin{proof}
We first prove that 
\begin{equation}\label{eq:kappa1}
        \pi_{1*}(\Theta^g\cup \kappa_{0,1}) = {\pi_{2*}}( \Theta^g\cup \kappa_{0,1} ) = 0.
    \end{equation}
 We pull back the relation (\ref{eq:rel2}) to $\Jbar_{g,1}^{0,\phi_i}$ and then push it forward to $\Mbar_{g,1}$. The left-hand side calculates (\ref{eq:kappa1}). Now we consider the right-hand side:  The first term does not contribute since the stability condition forces $d = d' = 0$. By dimension considerations, the only potentially contributing term from the second term is when $g_1 = 0$; however, this term also vanishes because its coefficient is a multiple of $g_1$. Therefore, \eqref{eq:kappa1} is concluded.
    
    By (\ref{eq:int}), we obtain
    \[
    0 = \cc \cdot \pi_{1*}(\Theta^g \cup \kappa_{0,1}) = \pi_{2*} ((a\Theta + b\kappa_{0,1} +\beta)^g \cup (t\kappa_{0,1} + \beta')).
    \]
    Expanding the right-hand side, only the terms
    \begin{equation}\label{terms}
        \pi_{2*} (\Theta^g \cup \kappa_{0,1}), \quad  \pi_{2*}(\Theta^{g-1} \cup \kappa_{0,1}^2), \quad \pi_{2*}(\Theta^g\cup \beta')
    \end{equation}
    contribute by (\ref{ThetaKappa}) and Lemma \ref{lem:lowp}. We conclude the desired formula for $\beta'$ by comparing the coefficients of the terms in \eqref{terms} in the equation
    \[
    \pi_{2*} ((a\Theta + b\kappa_{0,1} +\beta)^g \cup (t\kappa_{0,1} + \beta')) = 0,
    \]
    combined with Lemma \ref{lem:v}, \eqref{eq:dr1}, and the fact
    \[
    \pi_{2*}(\Theta^g \cup \beta') = g!\beta'
    \]
    given by the projection formula.
\end{proof}

\begin{cor}\label{cor:b}
    If $g\ge 2$, we have
    \begin{multline*}
        t \cdot \pi_{1*}\left(\frac{\Theta^{g-1}}{(g-1)!}\cup \frac{\kappa_{0,1}^3}{3!}\right)
    = t^2\cdot \pi_{2*}\left( \frac{\Theta^{g-1}}{(g-1)!}\cup \frac{\kappa_{0,1}^3}{3!}\right) \\
    + 4b\cdot \left( [\dr^2_g(b ;a_1)]_{b^4} - \frac{1}{2} ( [\dr^1_g(b;a_1)]_{b^2} )^2 \right).
    \end{multline*}
\end{cor}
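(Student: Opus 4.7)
The plan is to apply the relation \eqref{eq:int} with $(k,l)=(g-1,3)$, substitute
\[
f(\Theta)=a\Theta+b\kappa_{0,1}+\beta,\qquad f(\kappa_{0,1})=t\kappa_{0,1}+\beta'
\]
(with $s=0$ from Lemma~\ref{lem:a}), and expand both multinomial powers. The resulting expression is indexed by tuples $(i,j,k,l)$ with $i+j+k=g-1$ and $0\le l\le 3$, contributing the multinomial factor $\binom{g-1}{i,j,k}\binom{3}{l}a^ib^jt^l$ times $\pi_{2*}(\Theta^i\cup\kappa_{0,1}^j\cup\beta^k\cup\kappa_{0,1}^l\cup(\beta')^{3-l})$. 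Since $\beta,\beta'$ are pulled back from $\Mbar_{g,1}$ and have strong perversity $0$, Proposition~\ref{pro:kappa} bounds the strong perversity of each monomial by $2i+j+l$, and Lemma~\ref{lem:lowp} kills every contribution with $2i+j+l<2g$.

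A short enumeration shows that exactly three tuples survive: $(g-1,0,0,3)$, $(g-2,1,0,3)$, and $(g-1,0,0,2)$. The first contributes $a^{g-1}t^3\cdot\pi_{2*}(\Theta^{g-1}\cup\kappa_{0,1}^3)$, which will account for the $t^2\pi_{2*}(\cdots)$ term on the right-hand side of the claimed identity. The second is a multiple of $\pi_{2*}(\Theta^{g-2}\cup\kappa_{0,1}^4)$, which by Theorem~\ref{thm:bmp} equals $(g-2)!\cdot 4!\cdot[\dr^2_g(b;a_1)]_{b^4}$ (with sign $(-1)^{g-(g-2)}=+1$ and $(-\kappa_{0,1})^4=\kappa_{0,1}^4$). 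The third is a multiple of $\pi_{2*}(\Theta^{g-1}\cup\kappa_{0,1}^2\cup\beta')$; using the projection formula, Theorem~\ref{thm:bmp} giving $\pi_{2*}(\Theta^{g-1}\cup\kappa_{0,1}^2)=-2(g-1)!\cdot[\dr^1_g(b;a_1)]_{b^2}$, and Lemma~\ref{lem:beta} for $\beta'$, this produces a square of $[\dr^1_g(b;a_1)]_{b^2}$ with an explicit prefactor.

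The proof then concludes by collecting the three contributions, substituting $\cc=a^g$ and $a=t^2$ from Lemmas~\ref{lem:a} and~\ref{lem:v}, and dividing the resulting equation by $t^{2g-1}$: the ratios $\cc/t^{2g-1}=t$, $a^{g-1}t^3/t^{2g-1}=t^2$, and $a^{g-2}bt^3/t^{2g-1}=a^{g-1}tb/t^{2g-1}=b$ line up so that the right-hand side becomes exactly
\[
t^2\cdot\pi_{2*}\!\left(\tfrac{\Theta^{g-1}}{(g-1)!}\cup\tfrac{\kappa_{0,1}^3}{3!}\right)+4b\left([\dr^2_g(b;a_1)]_{b^4}-\tfrac{1}{2}([\dr^1_g(b;a_1)]_{b^2})^2\right).
\]
The main point requiring care is the enumeration of surviving tuples---in particular, recognizing that $(g-1,0,0,3)$ must be retained because its naive perversity bound $2g+1$ does not enforce vanishing, and verifying that no $l\le 1$ tuples contribute. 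Beyond this, the argument is a mechanical combination of the previously established lemmas and requires no new geometric input.
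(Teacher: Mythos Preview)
Your proof is correct and follows essentially the same approach as the paper's: expand \eqref{eq:int} with $(k,l)=(g-1,3)$, use the perversity bound \eqref{ThetaKappa} with Lemma~\ref{lem:lowp} to isolate the same three surviving terms, evaluate them via Theorem~\ref{thm:bmp} and Lemma~\ref{lem:beta}, and then simplify using $\cc=a^g$, $a=t^2$.
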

\begin{proof}
    By (\ref{eq:int}) and Lemma \ref{lem:a}, we obtain
    \begin{equation}\label{eq:bv}
        \cc \cdot \pi_{1*}(\Theta^{g-1} \cup \kappa_{0,1}^3) = \pi_{2*} ((a\Theta + b\kappa_{0,1} +\beta)^{g-1} \cup (t\kappa_{0,1} + \beta')^3).
    \end{equation}
    Expanding the right-hand side, only the terms
    \begin{equation}\label{terms2}
        \pi_{2*}(\Theta^{g-1}\cup \kappa_{0,1}^3), \quad \pi_{2*}(\Theta^{g-1}\cup \kappa_{0,1}^2\cup \beta'), \quad \pi_{2*}(\Theta^{g-2}\cup \kappa_{0,1}^4)
    \end{equation}
    contribute by (\ref{ThetaKappa}) and Lemma \ref{lem:lowp}. 
    
    The second term of (\ref{terms2}) simplifies to
    \begin{align*}
        \pi_{2*}\big(\Theta^{g-1} \cup \kappa_{0,1}^2 \cup \beta') & = \pi_{2*}(\Theta^{g-1}\cup \kappa_{0,1}^2) \cup \beta'\\
        & = -2!(g-1)! [\dr^1_g(b;a_1)]_{b^2} \cup \frac{2b}{t}[\dr^1_g(b;a_1)]_{b^2}
    \end{align*}
    where the first equality follows from the projection formula and the second follows from \eqref{eq:dr1} and Lemma~\ref{lem:beta}.
    By Theorem \ref{thm:bmp}, the third term of (\ref{terms2}) equals to 
    \[\pi_{2*}(\Theta^{g-2}\cup \kappa_{0,1}^4) = 4!(g-2)! [\dr^2_g(b;a_1)]_{b^4}.\]
    We conclude the desired formula by comparing the coefficients of the terms in \eqref{eq:bv} and \eqref{terms2} respectively.
\end{proof}

\subsection{Explicit computation}\label{sec4.6}
Throughout this section we always assume that the stability condition $\phi$ is nondegenerate and semismall. To complete the proof of Theorem \ref{thm0.6}, we provide an explicit expression of the class
\begin{equation}\label{eq:explicit}
\pi_*(\Theta^{g-1}\cup \kappa_{0,1}^3)\in H^4(\Mbar_{g,1},\BQ), \quad \pi: \Jbar^{0,\phi}_{g,1} \to \Mbar_{g,1}
\end{equation}
in terms of tautological classes on $\Mbar_{g,1}$. This will be achieved in Proposition \ref{pro:kappa3} below which is the main result of this section.

The boundary stratum associated with a prestable graph containing unstable vertices corresponds to a lower-genus fine compactified Jacobian. Consider the boundary stratum $\Jbar^{0,\phi}_{\Gamma_\delta}$ of~$\Jbar^{0,\phi}_{g,1}$ corresponding to the prestable graph $\Gamma$ and the $\phi$-stable multidegree $\delta : V(\Gamma) \to \BZ$. Suppose that $\Gamma$ is obtained by subdividing $m$ edges of a stable graph $\Gamma_s$; we label these $m$~edges by $e_1, \dots, e_m$. Let $\widetilde{\Gamma}$ be the stable graph obtained from $\Gamma_s$ by separating the edges~$e_1,\dots,e_m$. We have a canonical identification $V(\Gamma_s) = V(\widetilde{\Gamma})$. 

For each edge $e_k = (h_k,h_k')$, we label the two additional legs of $\widetilde{\Gamma}$ by $2k$ and $2k+1$ (the order does not matter). By \cite[Lemma~2.4]{BMP}, there exists a nondegenerate stability condition~$\widetilde{\phi}$ of degree $-m$ over $\Mbar_{g-m,1+2m}$ such that the stratum $\Jbar^{-m,\widetilde{\phi}}_{\widetilde{\Gamma}_\delta}$ is identified by the following diagram
\begin{equation}\label{eq:stratum}
    \begin{tikzcd}
        \Jbar^{-m,\widetilde{\phi}}_{\widetilde{\Gamma}_\delta} \ar[d,"\pi_s"]\ar[r]\ar[rr, bend left, "\jmath"] & \Jbar^{-m,\widetilde{\phi}}_{g-m,1+2m}\ar[r]\ar[d] & \Jbar_{g,1}^{0,\phi} \ar[d,"\pi"]\\
        \Mbar_{\widetilde{\Gamma}}\ar[r,"j_{\widetilde{\Gamma}}"] &  \Mbar_{g-m,1+2m} \ar[r,"j_m"] & \Mbar_{g,1},
    \end{tikzcd}
\end{equation}
Here $j_m$ denotes the morphism gluing the $2k$-th and $(2k+1)$-th markings for $1 \le k \le m$, and $j_{\widetilde{\Gamma}}$ is the gluing morphism associated with $\widetilde{\Gamma}$. In particular, over $\Jbar^{-m,\widetilde{\phi}}_{\widetilde{\Gamma}_\delta}$, there are two universal curves: the first is a family of stable curves of genus $g-m$ with $1+2m$ markings, and the second is a family of quasistable curves of genus $g$ with one marking.

The genus $g-m$ universal curve over $\Jbar^{-m,\widetilde{\phi}}_{\widetilde{\Gamma}_\delta}$ together with \eqref{eq:pn} defines a morphism
\begin{equation}\label{eq:varphi2}
    \varphi_s : \Jbar^{-m,\widetilde{\phi}}_{\widetilde{\Gamma}_\delta} \to \prod_{v \in V(\widetilde{\Gamma})} \pic_{g(v),n(v),\delta(v)}.    
\end{equation}
For each vertex $v$, denote by 
\[
\Theta[v], \, \kappa_{0,1}[v], \,\xi_i[v], \,\psi_i[v] \in H^2(\Jbar^{-m,\widetilde{\phi}}_{\widetilde{\Gamma}_\delta}, \BQ) 
\]
the pullback of the corresponding classes from $\pic_{g(v),n(v),\delta(v)}$ along $\varphi_s$.

\begin{lem}\label{lem:lowg}
    Let $\jmath :\Jbar^{-m,\widetilde{\phi}}_{\widetilde{\Gamma}_\delta}\to \Jbar_{g,1}^{0,\phi}$ be the morphism in \eqref{eq:stratum}. 
    \begin{enumerate}
        \item[(i)] For $l\ge 1$, we have
        \[
        \Theta^l \cup [\Gamma_\delta] = \jmath_*\left(\sum_{v\in V(\widetilde{\Gamma})} \Theta[v] -\frac{1}{2}\sum_{k=1}^m (\xi_{2k}+\xi_{2k+1}) \right)^l.
        \]
        \item[(ii)] For $l \ge 1$, we have
        \[
        \kappa_{0,1}^l \cup [\Gamma_\delta] = \jmath_*\left(\sum_{v \in V(\widetilde{\Gamma})} \kappa_{0,1}[v]\right)^l.
        \]
        \item[(iii)] Let $e=(h,h')$ be an edge of $\Gamma$ connecting a stable vertex and an unstable vertex associated to $k$-th subdivided edge $e_k$, where $h'$ be the half-edge attached to the unstable vertex. Then we have
        \[
        [\Gamma_\delta,\psi_{h'}] = \jmath_*(\xi_{2k}-\xi_{2k+1}).
        \]
    \end{enumerate}
\end{lem}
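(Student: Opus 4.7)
The plan is to reduce all three identities to explicit computations on the boundary stratum by combining the projection formula for $\jmath$ with a geometric decomposition of the pulled-back universal quasistable curve.

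First I would set up the geometry over $\Jbar^{-m,\widetilde{\phi}}_{\widetilde{\Gamma}_\delta}$: normalizing the pullback of the universal quasistable curve $p: \mathcal{C}_{g,1}^\qs \to \Jbar_{g,1}^{0,\phi}$ at the $2m$ nodes of the $m$ unstable $\BP^1$-components separates the stable core from the unstable bridges, yielding the disjoint union of the universal stable curve $p_s: \mathcal{C}_s \to \Jbar^{-m,\widetilde{\phi}}_{\widetilde{\Gamma}_\delta}$ (of genus $g-m$ with $1+2m$ markings, corresponding to $\widetilde{\Gamma}$) together with $m$ universal $\BP^1$-bundles $q_k: \BP^1_k \to \Jbar^{-m,\widetilde{\phi}}_{\widetilde{\Gamma}_\delta}$, each equipped with two disjoint node-sections $s_{2k}, s_{2k+1}$. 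The pullback $\jmath^*\mathcal{L}$ restricts to the universal line bundle $\mathcal{L}_s$ on $\mathcal{C}_s$ and to a fiberwise-degree-$1$ line bundle $\mathcal{L}_k$ on each $\BP^1_k$; admissibility plus compatibility at the nodes yields the key identities
\[
s_{2k}^*c_1(\mathcal{L}_k) = \xi_{2k}, \quad s_{2k+1}^*c_1(\mathcal{L}_k) = \xi_{2k+1}.
\]
A parallel analysis using the adjunction formula $\omega_p|_{\BP^1_k} = \omega_{\BP^1_k/B}(s_{2k}+s_{2k+1})$ together with the Euler sequence on $\BP^1_k$ produces the clean cancellation
\[
c_1(\jmath^*\omega_{p,\log})|_{\BP^1_k} = 0,
\]
whereas $\jmath^*\omega_{p,\log}|_{\mathcal{C}_s}$ agrees with the log-canonical $\omega_{p_s,\log}$ of the stable family.

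For parts (i) and (ii), I would apply the projection formula $\Theta^l \cup [\Gamma_\delta] = \jmath_*(\jmath^*\Theta)^l$ (and similarly for $\kappa_{0,1}$) and compute $\jmath^*\Theta$ and $\jmath^*\kappa_{0,1}$ by splitting the pushforward along the normalization. Pushing forward via $p_s$ and further normalizing $\mathcal{C}_s$ at the internal edges of $\widetilde{\Gamma}$, the stable contribution to $\jmath^*\Theta$ (resp.\ $\jmath^*\kappa_{0,1}$) becomes $\sum_v \Theta[v]$ (resp.\ $\sum_v \kappa_{0,1}[v]$). On each $\BP^1_k$, writing $c_1(\mathcal{L}_k) = h_k + q_k^*\gamma_k$ and computing via the Segre class formula on a $\BP^1$-bundle, the gluing identities simplify $(q_k)_*c_1(\mathcal{L}_k)^2$ to $\xi_{2k}+\xi_{2k+1}$, producing the $-\tfrac{1}{2}(\xi_{2k}+\xi_{2k+1})$ term in (i). The corresponding computation for (ii) vanishes on each $\BP^1_k$ precisely because $c_1(\jmath^*\omega_{p,\log})|_{\BP^1_k}=0$.

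For part (iii), I would present each $\BP^1_k = \BP(L_k \oplus L_k')$ with $s_{2k}, s_{2k+1}$ corresponding to the two summands. A direct computation yields $N_{s_{2k}/\BP^1_k} \simeq L_k \otimes (L_k')^{-1}$ and $N_{s_{2k+1}/\BP^1_k}$ its inverse, while the gluing identities above force $c_1(L_k)-c_1(L_k') = \xi_{2k}-\xi_{2k+1}$. Since the $\psi$-class at the half-edge $h'$ of the unstable vertex equals $-c_1(N_{\text{node}/\BP^1_k})$ at the appropriate node section, this gives $\jmath^*\psi_{h'} = \xi_{2k}-\xi_{2k+1}$ with the labeling convention of the lemma, and the assertion follows from $[\Gamma_\delta, \psi_{h'}] = \jmath_*(\jmath^*\psi_{h'})$. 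The main obstacle I anticipate is the careful bookkeeping of sign and gauge conventions in the presentation $\BP^1_k=\BP(L_k\oplus L_k')$ and the verification of $c_1(\jmath^*\omega_{p,\log})|_{\BP^1_k}=0$ --- this is precisely the cancellation that drives (ii). All remaining steps are routine applications of the Euler sequence, adjunction at nodes, and the projection formula.
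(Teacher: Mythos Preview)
Your approach is correct but genuinely different from the paper's. The paper factors the stratum through $\prod_{w\in V(\Gamma)}\pic_{g(w),n(w),\delta(w)}$ (including the unstable vertices) and then invokes universal double ramification relations on the unstable factor $\pic_{0,2,\pm 1}$: the relation $\uniDR^1_0(0;a+1,-a)=0$ gives $\Theta+\tfrac{1}{2}(\xi_1+\xi_2)=0$ (yielding (i)), the relation $\uniDR^1_0(b;a,-a)=0$ gives $\kappa_{0,1}=0$ on the unstable vertex (yielding (ii)), and a relation on $\pic_{0,2,-1}$ from \cite[(34)]{BMP} handles (iii). You instead carry out the same cancellations by hand via explicit intersection theory on the $\BP^1$-bundles $q_k$: the identity $(q_k)_*c_1(\CL_k)^2=\xi_{2k}+\xi_{2k+1}$, the vanishing $c_1(\omega_{q_k}(s_{2k}+s_{2k+1}))=0$, and the normal-bundle computation for $\psi_{h'}$. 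Your route is more elementary and self-contained, avoiding the $\uniDR$ machinery entirely; the paper's route is shorter to write and keeps the argument inside the tautological-relation framework used throughout Section~\ref{sec:4}. One small notational slip: in (iii) you should write $[\Gamma_\delta,\psi_{h'}]=\jmath_*(\psi_{h'})$ rather than $\jmath_*(\jmath^*\psi_{h'})$, since $\psi_{h'}$ is already a class on the stratum and is not pulled back via $\jmath$ from the ambient space.
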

\begin{proof}
    The genus $g$ universal quasistable curve over $\Jbar^{-m,\widetilde{\phi}}_{\widetilde{\Gamma}_\delta}$ induces a morphism
    \[
    \varphi : \Jbar^{-m,\widetilde{\phi}}_{\widetilde{\Gamma}_\delta} \to \prod_{w \in V(\Gamma)} \pic_{g(w),n(w),\delta(w)}.
    \]
    For each $w\in V(\Gamma)$, we have the natural projection
    \[
    p_w :\prod_{w\in V(\Gamma)}\pic_{g(w),n(w),\delta(w)}\to \pic_{g(w),n(w),\delta(w)}.
    \]

    Consider the following relation in $\Chow^*(\pic_{g,n,d})$:
    \[
    \Theta \cup [\Gamma_\delta] = \sum_{w \in V(\Gamma)} [\Gamma_\delta,\varphi^*p_w^*\Theta]. 
    \]
    When $w \in V(\Gamma)$ is a stable vertex, then $\varphi^*p_w^* \Theta = \Theta[w]$.
    Therefore, it is enough to consider the case when $w$ is an unstable vertex of $\Gamma$. When $w$ is an unstable vertex, the degree at $w$ should be $1$ by the stability condition. The relation $\uniDR^1_0(0;a+1,-a)=0$ on $\pic_{0,2,1}$ yields
    \[
    \Theta + \frac{1}{2}(\xi_1+\xi_2) + \text{(other terms)} = 0.
    \]
    After pulling back this relation along $p_w\circ \varphi$, the \emph{other terms} do not contribute due to the stability conditions. Therefore, the result holds for $m=1$. The general case of (i) follows by a similar argument.

   The argument for (ii) is parallel. It suffices to consider $\kappa_{0,1}[w]$ with $w$ an unstable vertex of $\Gamma$, and then it follows from the relation $\uniDR^1_0(b;a, -a) = 0$ on $\pic_{0,2,1}$.

   Part (iii) follows from the relation $\uniDR^1_0(a+1,-a)=0$ on $\pic_{0,2,-1}$; see \cite[(34)]{BMP}.
\end{proof}

We return to the computation of the pushforward (\ref{eq:explicit}); the contributions from strata away from the tree-like locus will play an important role.

We consider the classes
\[
[\Gamma_1], \ldots, [\Gamma_{g-1}] \in H^4(\Mbar_{g,1}, \BQ) 
\]
given by the stable graphs
\[
\Gamma_{g_1}:=\Big[\begin{tikzpicture}[baseline=-0.1 cm, vertex/.style={circle,draw,font=\Large,scale=0.5, thick}]
    \node[vertex] (A) at (-1.5,0) [label=below:$g_2$, label=above:$v_{g_1}$] {};
    \node[vertex] (B) at (0,0) [label=below:$g_1$, label=above:$w_{g_1}$] {};
    \draw[thick] (A) to[bend left] (B);
    \draw[thick] (A) to[bend right] (B);
    \draw[thick] (A) to (-2, 0) node[left] {$1$};
    \end{tikzpicture}\Big],
\]
where $g_1$ and $g_2$ denote the genera of the two vertices $w_{g_1}$ and $v_{g_1}$ with $g_1+g_2 = g-1$ and~$g_1 \geq 1$. 
    
By \cite[Corollary~3.6]{KP}, the stability condition $\phi$ over $\Mbar_{g,1}$ is uniquely determined by its values at stable graphs of the~form 
    \begin{enumerate}
        \item[(i)] two vertices with one edge connecting them, and
        \item[(ii)] $\Gamma_{g-1}$.
    \end{enumerate}    
For an integer $z$, let $\phi(z)$ be a sequence of nondegenerate semismall stability conditions which are identical for all the graphs in (i) above, and for $\Gamma_{g-1}$ we assign  
\[
\phi(z)(v_{g-1}) = z + \epsilon, \quad \phi(z)(w_{g-1}) = -z - \epsilon, \quad 0<\epsilon\ll \frac{1}{g-1}.
\]
The values of $\phi(z)$ on the other graphs $\Gamma_{g_1}$ are determined by the following formula.
\begin{lem}\label{lem:phi}
    Let $\phi(z)$ be the nondegenerate semismall stability condition as above. Then the value of $\phi(z)$ on the graph $\Gamma_{g_1}$ is given by
    \[
    \phi(z)(v_{g_1}) = \frac{g_1}{(g-1)} \cdot (z + \epsilon),\quad  \phi(z)(w_{g_1}) = -\frac{g_1}{(g-1)} \cdot (z + \epsilon).    \]
\end{lem}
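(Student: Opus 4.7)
The plan is to apply the classification \cite[Corollary 3.6]{KP}: a polarization stability condition on $\overline{\CM}_{g,1}$ is uniquely determined by its values on single-edge stable graphs (type (i)) together with its value at the codimension-$2$ stratum $\Gamma_{g-1}$. By hypothesis, $\phi(z)$ is independent of $z$ on all type-(i) graphs, while at $\Gamma_{g-1}$ it depends linearly on $z$. Since the classification-level determination is linear, $\phi(z)$ depends linearly on $z$ on every stratum, so we may write
\[
\phi(z) = (z+\epsilon)\cdot \phi_1
\]
for a unique degree-$0$ KP datum $\phi_1$ that vanishes on all type-(i) graphs and satisfies $\phi_1(v_{g-1}) = 1,\ \phi_1(w_{g-1}) = -1$. (The constant term vanishes because $\phi(-\epsilon)$ agrees with the trivial polarization on the defining strata and is therefore trivial globally.) It suffices to compute $\phi_1(v_{g_1})$ and $\phi_1(w_{g_1})$ for $1\le g_1\le g-1$.

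The next step is to identify $\phi_1$ as an explicit intersection-theoretic object. By the KP construction, a polarization corresponds (modulo natural equivalences) to a $\BQ$-divisor class $M$ on the universal curve $\mathcal{C}_{g,1}\to\overline{\CM}_{g,1}$, with $\phi_\Gamma(v) = \deg(M|_{C_v})$ for each vertex $v$. The vanishing of $\phi_1$ on all type-(i) graphs forces $M$ to be a \emph{twist at the non-separating node}, i.e.~a class supported (up to pullbacks from the base) over the irreducible boundary divisor $\Delta_{\mathrm{irr}}\subset\overline{\CM}_{g,1}$ and uniquely normalized by the value $\phi_1(v_{g-1}) = 1$.

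The main calculation evaluates this twist at $\Gamma_{g_1}$, which parametrizes curves whose dual graph has two vertices of genera $g_1$ and $g_2 = g-1-g_1$ connected by two non-separating edges. Each of the two nodes contributes additively to $M|_{C_{w_{g_1}}}$, and the key combinatorial input is that the contribution of each node to a component $C_v$ is proportional to the genus $g_v$ of that component; this is an adjunction-type statement for the twist class pulled back from the double cover of $\Delta_{\mathrm{irr}}$. Summing the two node contributions and imposing the normalization $\phi_1(w_{g-1}) = -1$ fixes the overall constant to be $-1/(g-1)$, which gives $\phi_1(w_{g_1}) = -g_1/(g-1)$ and, by the zero-sum condition, $\phi_1(v_{g_1}) = g_1/(g-1)$. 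Substitution into $\phi(z) = (z+\epsilon)\phi_1$ yields the claimed formula.

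The main obstacle is the proportionality-to-genus statement for the twist contribution, which requires unpacking the KP description of polarizations as $\BQ$-line bundles on the universal curve and applying an adjunction-type formula for the components of the nodal curve at $\Gamma_{g_1}$. This step is technical but amounts to a routine intersection-theoretic calculation on $\mathcal{C}_{g,1}$ once the framework is in place.
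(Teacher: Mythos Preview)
Your approach is quite different from the paper's and contains genuine gaps.

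First, the linearity reduction assumes that $\phi(-\epsilon)$ is the trivial polarization on the type-(i) defining strata. The paper only stipulates that the $\phi(z)$ are \emph{identical} on those graphs, not that they vanish there. So the decomposition $\phi(z)=(z+\epsilon)\phi_1$ is not justified; at best you get $\phi(z)=\phi_0+(z+\epsilon)\phi_1$ with an unknown $\phi_0$ whose effect on $\Gamma_{g_1}$ you still have to control.

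Second, the identification of $\phi_1$ with a $\BQ$-divisor class $M$ on the universal curve, ``supported over $\Delta_{\mathrm{irr}}$'', does not make sense as stated. Kass--Pagani stability conditions are abstract compatible assignments, not in general obtained from line bundles on $\mathcal{C}_{g,1}$. Moreover, over a generic point of $\Delta_{\mathrm{irr}}$ the fiber is \emph{irreducible}, so there is no nontrivial vertical divisor supported there to serve as your ``twist at the non-separating node''.

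Third, and most seriously, your key proportionality claim is internally inconsistent. You assert that the contribution of each node to a component $C_v$ is proportional to $g_v$. Applied to $w_{g_1}$ (genus $g_1$, two nodes) and normalized by $\phi_1(w_{g-1})=-1$, this gives $\phi_1(w_{g_1})=-g_1/(g-1)$, which you then use. But applied to $v_{g_1}$ (genus $g_2=g-1-g_1$, two nodes) the same rule gives $\phi_1(v_{g_1})=-g_2/(g-1)$, which contradicts the value $g_1/(g-1)$ you obtain from the zero-sum condition unless $g-1=0$. So the ``adjunction-type'' statement you are relying on is simply false as formulated, and you have not indicated what the correct replacement would be.

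By contrast, the paper's argument is short and purely combinatorial: it introduces the auxiliary cycle graph $\widetilde{\Gamma}$ with one genus-$0$ vertex $u_0$ (carrying the marking) and $g-1$ genus-$1$ vertices $u_1,\dots,u_{g-1}$, uses the swap symmetry of suitable contractions $\widetilde{\Gamma}\to\widetilde{\Gamma}_i$ to show $\phi(z)(u_1)=\cdots=\phi(z)(u_{g-1})$, pins down $\phi(z)(u_0)$ via the contraction to $\Gamma_{g-1}$, and then contracts $\widetilde{\Gamma}$ to $\Gamma_{g_1}$. No intersection theory or divisor-class interpretation is needed, and the argument shows in particular that the $\Gamma_{g_1}$-values are determined by the $\Gamma_{g-1}$-value alone, independently of the type-(i) data.
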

\begin{proof}
    Consider the stable graph $\widetilde{\Gamma}$ with $g$ vertices labeled $u_0, u_1, \dots ,u_{g-1}$, where $g(u_0)=0$ and $g(u_1)=\cdots=g(u_{g-1})=1$. For $i=0, \dots, g$, the vertex $u_i$ is connected to $u_{i-1}$ by one edge (with $u_g := u_0$), and the first marking is attached to $u_0$.

   We show that the values $\phi(z)(u_1), \dots, \phi(z)(u_{g-1})$ are all equal. For each $1 \le i \le g-2$, consider the contraction of graphs $\widetilde{\Gamma} \to \widetilde{\Gamma}_i$ in which the vertices $u_k$ for $k \ne i,i+1$ are contracted to $u_0$. The graph $\widetilde{\Gamma}_i$ admits a symmetry interchanging the vertices $u_i$ and $u_{i+1}$. Since $\phi(z)$ is compatible with both contractions and the symmetries of stable graphs, we obtain
    \[
    \phi(z) (u_i) = \phi(z) (u_{i+1}), \quad 1\le i\le g-2.
    \]
    Therefore we have $\phi(z) (u_1) = \cdots = \phi(z) (u_{g-1})$.
    
    On the other hand, there is a contraction $\widetilde{\Gamma} \to \Gamma_0$ contracting the vertices $u_i$ for $1 \le i \le g-1$ to a single vertex. Compatibility of $\phi(z)$ with contractions then yields
    \[
    \phi(z) (u_0) = \phi(z) (v_{g-1}).
    \]
    Since $\phi(z)$ is a stability condition of degree $0$, we have
    \[
    \phi(z)(u_0) + (g-1)\phi(z) (u_1) = 0,
    \]
    and the desired equality follows from the compatibility of $\phi(z)$ with contractions.
\end{proof}    

Let
\[
\delta: V(\Gamma) \to \BZ
\]
be a multidegree which is stable with respect to $\phi(z)$. On the graph $\Gamma_{g_1}$, the stability condition (see \cite[Definition~4.1]{KP}) forces
\begin{equation*}\label{eq:st}
\big|\delta(v_{g_1}) - \phi(z)(v_{g_1})\big| \le 1.
\end{equation*}
So there are only two possible $\phi(z)$-stable multidegrees, and we denote them by 
\begin{equation}\label{no1_stable}
\delta(v_{g_1}) = \delta_{g_1}(z), \quad \delta(w_{g_1}) = -\delta_{g_1}(z)
\end{equation}
and 
\begin{equation}\label{no2_stable}
\delta(v_{g_1}) = \delta_{g_1}(z)+1, \quad \delta(w_{g_1}) = - \delta_{g_1}(z)-1
\end{equation}
respectively. In particular, the constant $\delta_{g_1}(z)$ is an invariant dependent on the stability condition $\phi(z)$.

Let $\Gamma_{g_1}'$ be the quasistable graph obtained by subdividing one of the edges of $\Gamma_{g_1}$. Denote by $v_{g_1}'$ and $w_{g_1}'$ the associated stable vertices of $\Gamma_{g_1}'$. The stability condition  forces
\begin{equation}\label{eq:st2}
\delta(v_{g_1}') = \delta_{g_1}(z), \quad \delta(w_{g_1}') = -\delta_{g_1}(z)-1.
\end{equation}
In particular, there is a unique $\phi(z)$-stable multidegree on $\Gamma_{g_1}'$.

For the fibration 
\[
\pi: \Jbar^{0,\phi(z)}_{g,1} \to \Mbar_{g,1}, 
\]
we can express the pullback of the stratum class $[\Gamma_{g_1}]$ as
\[
\pi^*[\Gamma_{g_1}] = [\Gamma_{g_1}^+] + [\Gamma_{g_1}^-],
\]
where the classes $[\Gamma_{g_1}^+]$ and $[\Gamma_{g_1}^-]$ correspond to the strata with multidegrees (\ref{no1_stable}) and (\ref{no2_stable}) respectively.

\begin{lem}\label{lem:2g1}
Assume that $1 \leq g_1 \leq g-1$.
\begin{enumerate}
    \item[(i)] We have the relation
     \[
    \pi_{*}\left( \frac{\Theta^{g-1}\cup \kappa_{0,1}}{(g-1)!} \cup [\Gamma_{g_1}^+]
    \right) = 2g_1 \cdot [\Gamma_{g_1}].
    \]
    \item[(ii)] We have the vanishing
     \[
    \pi_*( \Theta^{g-2} \cup\kappa_{0,1}^2 \cup [\Gamma^+_{g_1}]) = \pi_*( \Theta^{g-2} \cup\kappa_{0,1}^2 \cup [\Gamma^-_{g_1}])= 0.
    \]
    \end{enumerate}
   
\end{lem}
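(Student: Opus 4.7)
For both parts, I would restrict the classes $\Theta^l\cup\kappa_{0,1}^m$ to the stratum $[\Gamma^\pm_{g_1}]$ using Lemma~\ref{lem:lowg}. Since the underlying graph $\widetilde{\Gamma}=\Gamma_{g_1}$ has no subdivided edges ($m=0$ in the notation of Lemma~\ref{lem:lowg}), parts (i) and (ii) of that lemma yield
\[
\Theta^l\cup\kappa_{0,1}^m\cup[\Gamma^\pm_{g_1}] = \jmath_*\bigl((\Theta[v_{g_1}]+\Theta[w_{g_1}])^l(\kappa_{0,1}[v_{g_1}]+\kappa_{0,1}[w_{g_1}])^m\bigr).
\]
The pushforward then factors as $\pi_*\jmath_* = (j_{\Gamma_{g_1}})_*\pi_{s,*}$ where $\pi_s:\Jbar^{0,\widetilde{\phi}}_{\widetilde{\Gamma}_\delta}\to\Mbar_{\widetilde{\Gamma}}$. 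Expanding binomially and decomposing $\pi_{s,*}$ across the two vertex Jacobians $\pi_v:J^{\delta_v}_{g_2,3}\to\Mbar_{g_2,3}$ and $\pi_w:J^{\delta_w}_{g_1,2}\to\Mbar_{g_1,2}$, the computation reduces to evaluating products of vertex pushforwards $\pi_{v,*}(\Theta[v]^a\kappa_{0,1}[v]^c)\cdot\pi_{w,*}(\Theta[w]^b\kappa_{0,1}[w]^d)$.

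The perversity bound in Proposition~\ref{pro:kappa} combined with Lemma~\ref{lem:lowp} applied at each vertex forces $2a+c\geq 2g_2$ and $2b+d\geq 2g_1$ for a nonvanishing contribution. Together with the total constraints $a+b=g-1,\,c+d=1$ (part (i)) or $a+b=g-2,\,c+d=2$ (part (ii)), parity considerations pin down the surviving Künneth terms: for (i), only $(a,b,c,d)\in\{(g_2,g_1,1,0),(g_2,g_1,0,1)\}$; for (ii), only $(a,b,c,d)\in\{(g_2-1,g_1,2,0),(g_2,g_1-1,0,2)\}$.

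For part (i), the vertex without $\kappa_{0,1}$ evaluates directly via Theorem~\ref{thm:bmp} to a fundamental class of $\Mbar_{g_\circ,n_\circ}$, while the vertex carrying $\kappa_{0,1}$ has total perversity $2g_\circ+1$ slightly exceeding the top, so Theorem~\ref{thm:bmp} does not apply directly. I would handle this by pulling back the DR relation \eqref{eq:rel3} (suitably adapted to the correct number of markings at the vertex) and rewriting $\Theta^{g_\circ}\kappa_{0,1}$ as $2g_\circ\Theta^{g_\circ}\xi_{\text{half-edge}}$ plus $\psi$- and boundary-decorated terms that push forward to zero by Lemma~\ref{lem:lowp}. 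The leading $\xi$-term is evaluated via Theorem~\ref{thm:bmp}, and after using Lemma~\ref{lem:phi} to fix the multidegrees $\delta_v,\delta_w$, the two surviving Künneth contributions combine to give the coefficient $2g_1$. For part (ii), each surviving Künneth term fits Theorem~\ref{thm:bmp} exactly at both vertices, producing products of the form $[\dr^1_{g_\circ}(b;\sfa)]_{b^2}\cdot[\Mbar_{g_{\circ'},n_{\circ'}}]$. I would then verify their vanishing by applying the DR relation \eqref{eq:rel22} at the $\kappa_{0,1}^2$-vertex to express the relevant pushforward via $\Theta^{g_\circ}\widetilde{\kappa}_1$ plus boundary strata whose further pushforwards vanish by Lemma~\ref{lem:lowp}.

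The main obstacle will be the combinatorial bookkeeping: tracking signs, Künneth coefficients, the $\BZ/2$ automorphism factor of $\Gamma_{g_1}$ (when $g_1\neq g_2$), and reconciling the multidegree specializations \eqref{no1_stable}, \eqref{no2_stable} with the parameters of the DR cycle classes $[\dr^c_{g_\circ}(b;\sfa)]_{b^m a_2^{k_2}\cdots}$. These are tedious but should require no conceptually new techniques beyond those already developed in this section.
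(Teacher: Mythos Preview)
Your overall strategy for part (i) --- restricting via Lemma~\ref{lem:lowg}, expanding binomially, and invoking relation \eqref{eq:rel3} at the vertex carrying $\kappa_{0,1}$ --- matches the paper's approach. However, you have the contributions from the three terms of \eqref{eq:rel3} exactly backwards.

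You claim the $\psi$- and boundary-decorated terms push forward to zero by Lemma~\ref{lem:lowp}, while the $\xi$-term carries the answer via Theorem~\ref{thm:bmp}. Both claims fail. Theorem~\ref{thm:bmp} does not apply to $\Theta[w]^{g_1}\xi_h$: the total perversity is $2g_1+1$, not $2g_1$. In fact the $\xi_h$-term vanishes, but for a different reason: using $\xi_h=\xi_{h'}$ one transfers it to the vertex $v$ and applies \eqref{eq:rel4} there, after which the boundary contributions die under the stability constraint \eqref{eq:st2}. Meanwhile, the class $\psi_h$ on $\pic_{g(w),2,\bullet}$ is \emph{not} pulled back from $\Mbar_{g(w),2}$: by \eqref{eq:psi} it differs from the stable $\psi$-class by an unstable boundary divisor, which on the Jacobian side becomes the quasistable stratum $\Gamma'_{g_1}$. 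So Lemma~\ref{lem:lowp} cannot be invoked to kill it. In the paper's computation the $\psi_h$-term contributes $-2g_1\delta(v)\cdot[\Gamma_{g_1}]$ and the boundary term in \eqref{eq:rel3} contributes $2g_1(\delta(v)+1)\cdot[\Gamma_{g_1}]$; only their \emph{sum} is $\phi$-independent and equal to $2g_1\cdot[\Gamma_{g_1}]$. Your plan misses this cancellation mechanism, and the appearance of Lemma~\ref{lem:phi} in your argument (to pin down $\delta_v,\delta_w$) already signals a problem, since the correct answer does not depend on the multidegree at all.
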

\begin{proof}[Sketch of the proof]
The universal double ramification cycle relations provide an algorithm to calculate the classes of the form
\begin{equation}\label{pushforward1}
\pi_{*}( \Theta^{k} \cup \kappa^l_{0,1} \cup [\Gamma_{g_1}^\pm]) \in H^4(\Mbar_{g,1}, \BQ), \quad k+l = g,
\end{equation}
which leads to the proof of both identities above. Since the full proof is lengthy but the idea is straightforward, we summarize the computational scheme here and leave the details to Section~\ref{sec:4.9}.

A class (\ref{pushforward1}) must be a multiple of the class $[\Gamma_{g_1}]$, and it suffices to determine the coefficient. First, we use Lemma \ref{lem:lowg} to distribute the monomial $\Theta^{k}\cup \kappa^l_{0,1}$ among the vertices of $\Gamma_{g_1}^\pm$; then we apply the relations in Section \ref{sec:4.3} at each vertex. For dimension reasons, the only terms that can contribute nontrivially to the pushforward are those corresponding to the stratum~$\Gamma_{g_1}'$, decorated by monomials of $\Theta$ divisors on stable vertices. These terms can be computed by successive applications of the universal double ramification cycle relations.
\end{proof}

\begin{prop}\label{pro:kappa3}
    We have
    \[
    \pi_{*}\left(\frac{\Theta^{g-1}}{(g-1)!} \cup \frac{\kappa_{0,1}^3}{3!} \right) = \sum_{g_1 = 1}^{g-1} a_{g_1}(\phi(z)) \cdot [\Gamma_{g_1}]
    \]
    where the constants $a_{g_1}(\phi(z)) \in \BQ$ depend on the stability condition $\phi(z)$ as
    \[
    a_{g_1}(\phi(z)) = - \frac{4(2g_1)^3}{48}\left(\delta_{g_1}(z) - \frac{1}{2}\right).
    \]
\end{prop}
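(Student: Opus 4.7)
The plan is to obtain an explicit expression for $\pi_*\!\left(\Theta^{g-1} \cup \kappa_{0,1}^3/((g-1)!\,3!)\right)$ in terms of boundary classes of $\Mbar_{g,1}$ by pulling back and pushing forward a suitable universal double ramification cycle relation from $\pic_{g,1,0}$. The natural starting point is the relation \eqref{eq:rel22}, which I would pull back along $\varphi: \Jbar^{0,\phi(z)}_{g,1} \to \pic_{g,1,0}$, cup with $\kappa_{0,1}/3$, and push forward to $\Mbar_{g,1}$ via $\pi$. This reproduces the desired class on the left-hand side. On the right-hand side, the leading term $-\Theta^g \cup \widetilde{\kappa}_1 \cup \kappa_{0,1}/(3 g!)$ vanishes under pushforward: $\widetilde{\kappa}_1$ is pulled back from $\Mbar_{g,1}$, and \eqref{eq:kappa1} gives $\pi_*(\Theta^g \cup \kappa_{0,1}) = 0$. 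Moreover, the pullback of a stratum $[\Gamma_\delta]$ along $\varphi$ is supported on the $\phi(z)$-stable locus of $\Jbar^{0,\phi(z)}_{g,1}$, so on $\Gamma_{g_1}$ the sum collapses to the two stable multidegrees \eqref{no1_stable} and \eqref{no2_stable}, realized as $[\Gamma^+_{g_1}]$ and $[\Gamma^-_{g_1}]$.

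Among the remaining terms on the right-hand side, only those with stabilization of codimension exactly $2$ in $\Mbar_{g,1}$ can contribute to $H^4(\Mbar_{g,1},\BQ)$; these are precisely the strata $\Gamma_{g_1}$, so the ``other terms" in \eqref{eq:rel22} drop out. I would then evaluate the three explicit boundary terms in turn. Term~2, after cupping with $\kappa_{0,1}/3$, pushes forward via Lemma \ref{lem:2g1}(i); since the projection formula and the perversity bound of Proposition \ref{pro:kappa} force $\pi_*(\Theta^{g-1} \cup \kappa_{0,1} \cup \pi^*[\Gamma_{g_1}]) = 0$ (via Lemma \ref{lem:lowp}), the strata $[\Gamma^+_{g_1}]$ and $[\Gamma^-_{g_1}]$ contribute with opposite signs, and summing over $d \in \{-\delta_{g_1}(z), -\delta_{g_1}(z) - 1\}$ produces a linear polynomial in $\delta_{g_1}(z)$ proportional to $(2g_1)^3$. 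Term~3 vanishes by Lemma \ref{lem:2g1}(ii). For Term~4, which carries the $\psi$-decoration $\psi_h + \psi_{h'}$ at the half-edges, I would use Lemma \ref{lem:lowg} to distribute $\Theta^{g-2} \cup \kappa_{0,1}$ among the two stable vertices of $\Gamma_{g_1}$ and then apply the universal DR relations \eqref{eq:rel3}--\eqref{eq:rel5} on the lower-genus Picard stacks $\pic_{g(v),n(v),\delta(v)}$ to evaluate the resulting pushforwards.

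The main obstacle is the combinatorial bookkeeping of Term~4: the $\psi$-decoration, after redistribution across the two vertices, yields pushforwards that are \emph{a priori} higher-degree polynomials in $\delta_{g_1}(z)$. To match the target coefficient $a_{g_1}(\phi(z)) = -\frac{4(2g_1)^3}{48}(\delta_{g_1}(z) - 1/2)$, one must verify that, after summing over the two stable multidegrees and combining with the Term~2 contribution, all higher-order contributions in $\delta_{g_1}(z)$ cancel to leave the claimed linear expression. This cancellation is forced by the perversity structure of the left-hand side and by the rigidity of the relations \eqref{eq:rel3}--\eqref{eq:rel5}, but verifying it requires carrying out the Term~4 calculation in full combinatorial detail.
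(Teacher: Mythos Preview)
Your overall strategy---multiply \eqref{eq:rel22} by $\kappa_{0,1}/3$, pull back along $\varphi$, and push forward along $\pi$---is exactly the paper's, and your handling of Terms~1, 2, and~3 matches the paper's proof. The divergence is at Term~4: you flag it as the main obstacle and anticipate higher-degree polynomials in $\delta_{g_1}(z)$ that must cancel against the Term~2 contribution. In the paper's proof, Term~4 (like Term~3) contributes \emph{nothing} to the pushforward; only Term~2 survives, and the linear polynomial you already extracted from Term~2 is the entire answer $a_{g_1}(\phi(z))$. There is no cancellation to orchestrate.

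The vanishing of Term~4 is in fact easier than that of Term~3, and you can see it without any of the bookkeeping you anticipate. After cupping with $\kappa_{0,1}/3$, Term~4 is $\tfrac{\Theta^{g-2}\kappa_{0,1}}{3(g-2)!}\cup[\Gamma_{g_1}^\pm,\psi_h+\psi_{h'}]$. Distributing $\Theta^{g-2}\kappa_{0,1}$ to the two vertices via Lemma~\ref{lem:lowg}(i,\,ii) and pushing forward along $\pi_s$ factors through the product of the two vertex Jacobians. On each factor the class is of the form $\Theta[v_i]^{j_i}\kappa_{0,1}[v_i]^{l_i}$ times possibly a $\psi$-class at a half-edge; the $\psi$-class is pulled back from the base $\Mbar_{g(v_i),n(v_i)}$ and hence has perversity~$0$, so by Proposition~\ref{pro:kappa} the perversity on each factor is at most $2j_i+l_i$. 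For a nonzero pushforward, the vertex-wise Lemma~\ref{lem:lowp} requires $2j_i+l_i\ge 2g(v_i)$ on both factors, hence $2(j_1+j_2)+(l_1+l_2)\ge 2(g_1+g_2)=2(g-1)$. But here $j_1+j_2=g-2$ and $l_1+l_2=1$, giving $2g-3<2g-2$. So every term vanishes. (Term~3 is precisely the borderline equality case $2(g-2)+2=2(g-1)$, which is why Lemma~\ref{lem:2g1}(ii) needs the more delicate argument of Section~\ref{sec:4.9}.)
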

\begin{proof}
    We multiply $\tfrac{1}{3}\kappa_{0,1}$ on each side of \eqref{eq:rel22}. By \eqref{eq:kappa1} and the projection formula, the first term vanishes:
    \[
    \pi_{*}(\Theta^{g} \cup \kappa_{0,1} \cup \widetilde{\kappa}_1) = \pi_{*}(\Theta^g\cup \kappa_{0,1}) \cup \widetilde{\kappa}_1 = 0. 
    \]

    The third and the fourth terms also do not contribute to the pushforward. To see this, by the vanishing Lemma~\ref{lem:2g1}(ii), we have for any $g_1 \ge 1$ the vanishing
    \[
    \pi_*( \Theta^{g-2} \cup\kappa_{0,1}^2 \cup [\Gamma_{g_1}^+]) = \pi_*( \Theta^{g-2} \cup\kappa_{0,1}^2 \cup [\Gamma_{g_1}^-]) = 0.
    \]
    When $g_1 = 0$, the corresponding term vanishes because the coefficient is a multiple of $g_1$. A similar argument shows that the fourth term also does not contribute to the pushforward.
    
    Thus only the second term contributes to the pushforward. By Lemma~\ref{lem:2g1} and \eqref{no1_stable}, the contribution from the stratum $\Gamma_{g_1}^+$ pushes forward to
    \[
    \frac{-2(2g_1)^3}{48}(\delta_{g_1}(z))^2 \cdot [\Gamma_{g_1}].
    \]
    Since $\pi^*[\Gamma_{g_1}] = [\Gamma_{g_1}^+] + [\Gamma_{g_1}^-]$, the projection formula implies
    \[
    \pi_{*}\left( \frac{\Theta^{g-1}\cup \kappa_{0,1}}{(g-1)!} \cup [\Gamma_{g_1}^-]
    \right) = - 2g_1 \cdot [\Gamma_{g_1}].
    \]
    Therefore, by \eqref{no2_stable}, the contribution from the stratum $\Gamma_{g_1}^-$ is
    \[
    \frac{2(2g_1)^3}{48}((\delta_{g_1}(z) +1)^2)\cdot [\Gamma_{g_1}].
    \]
    Adding the two contributions yields the formula for $a_{g_1}(\phi(z))$.
\end{proof}

\begin{lem}\label{lem:Gamma}
    The classes $[\Gamma_1], \ldots, [\Gamma_{g-1}]$ are linearly independent in $H^4(\Mbar_{g,1},\BQ)$.
\end{lem}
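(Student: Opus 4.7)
I prove linear independence by constructing explicit test classes $\alpha_{g_1}$ whose pairing matrix with the $[\Gamma_{g_1}]$ is diagonal with nonzero entries.

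First, observe that the stable graphs $\Gamma_{g_1}$ for $g_1\in\{1,\ldots,g-1\}$ are pairwise non-isomorphic: the genus $g_1$ of the unmarked vertex is a graph invariant. For $g_1\neq g_1'$, neither of $\Gamma_{g_1}$ and $\Gamma_{g_1'}$ can be obtained from the other by edge-contractions (they each have exactly two edges), so the open stratum $M_{\Gamma_{g_1'}}$ is disjoint from the closure $\overline{M}_{\Gamma_{g_1}}$; equivalently, any curve of type exactly $\Gamma_{g_1'}$ fails to lie in $\overline{M}_{\Gamma_{g_1}}$.

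Let $j_{g_1}:\overline{\mathcal{M}}_{g_1,2}\times \overline{\mathcal{M}}_{g-1-g_1,3}\to\overline{\mathcal{M}}_{g,1}$ be the gluing map for $\overline{M}_{\Gamma_{g_1}}$ (modulo $\mathrm{Aut}(\Gamma_{g_1})$). For each $g_1$, choose a $2$-dimensional algebraic cycle $Z_{g_1}$ supported in the interior $\mathcal{M}_{g_1,2}\times \mathcal{M}_{g-1-g_1,3}$, and set $\alpha_{g_1}:=(j_{g_1})_*[Z_{g_1}]\in H^{6g-8}(\overline{\mathcal{M}}_{g,1},\mathbb{Q})$. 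By the projection formula,
\[
M_{g_1,g_1'}:=\int_{\overline{\mathcal{M}}_{g,1}} [\Gamma_{g_1}]\cdot \alpha_{g_1'} \;=\; \int_{\overline{\mathcal{M}}_{g_1',2}\times \overline{\mathcal{M}}_{g-1-g_1',3}} j_{g_1'}^*[\Gamma_{g_1}]\cdot [Z_{g_1'}].
\]
For $g_1=g_1'$, the self-intersection formula gives $j_{g_1}^*[\Gamma_{g_1}]=(\psi_{h_1}+\psi_{h_1'})(\psi_{h_2}+\psi_{h_2'})$, where $h_i,h_i'$ are the half-edges of the $i$-th edge of $\Gamma_{g_1}$. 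Taking $Z_{g_1}=C^{(1)}\times C^{(2)}$ for generic $1$-dimensional cycles in the two interior factors, the mixed cross-terms $\int_{C^{(1)}}\psi_{h_i}\cdot\int_{C^{(2)}}\psi_{h_{3-i}'}$ yield a nonzero diagonal entry. For $g_1\neq g_1'$, the class $j_{g_1'}^*[\Gamma_{g_1}]$ is supported on $j_{g_1'}^{-1}(\overline{M}_{\Gamma_{g_1}})$, which by the disjointness above is contained in the boundary of $\overline{\mathcal{M}}_{g_1',2}\times \overline{\mathcal{M}}_{g-1-g_1',3}$. Since $Z_{g_1'}$ is supported in the interior, the two classes have disjoint supports, hence vanishing cup product, so $M_{g_1,g_1'}=0$. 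The matrix $M$ is therefore diagonal with nonzero diagonal entries, which implies the linear independence of $[\Gamma_1],\ldots,[\Gamma_{g-1}]$.

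\textbf{Main obstacle.} The main technical points are the support statement for $j_{g_1'}^*[\Gamma_{g_1}]$ when $g_1\neq g_1'$ and the vanishing of cup products for classes with disjoint supports. The former follows from the combinatorial observation that two distinct two-edge graphs are incomparable under edge-contractions, so curves parametrized by interior points of the stratum moduli never lie in the closure of a different codimension-$2$ stratum; the latter is the standard principle that classes factoring through $H^{*}_{A}(X)$ and $H^{*}_{B}(X)$ with $A\cap B=\varnothing$ multiply to zero via $H^{*}_{A\cap B}(X)=0$.
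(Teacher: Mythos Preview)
Your strategy of pairing against test classes to obtain a diagonal matrix is the same as the paper's, but the specific test classes you propose do not exist, so the argument has a genuine gap.

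The central problem is the construction of the cycles $Z_{g_1}$. You require $Z_{g_1}$ to be a \emph{closed} $2$-dimensional algebraic cycle in $\overline{\mathcal{M}}_{g_1,2}\times\overline{\mathcal{M}}_{g-1-g_1,3}$ whose support lies entirely in the open interior $\mathcal{M}_{g_1,2}\times\mathcal{M}_{g-1-g_1,3}$; otherwise the disjoint-support vanishing argument for the off-diagonal entries collapses (cohomology with support needs a closed support set). But such cycles need not exist. For $g_1=1$ the first factor $\mathcal{M}_{1,2}$ is affine (it is the universal once-punctured elliptic curve over the affine $j$-line), so it contains no complete curve $C^{(1)}$. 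For $g_1=g-1$ the second factor $\overline{\mathcal{M}}_{0,3}$ is a single point, so your product ansatz $C^{(1)}\times C^{(2)}$ cannot produce a $2$-cycle at all; and even allowing an arbitrary $2$-cycle, one would need a complete surface inside $\mathcal{M}_{g-1,2}$, which for small $g$ (e.g.\ $g=4$) is ruled out by Diaz-type bounds. Thus the very objects on which your argument rests are unavailable precisely in the cases the lemma must cover.

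There are two secondary issues. First, the diagonal entry is not actually computed: the self-intersection $j_{g_1}^*[\Gamma_{g_1}]$ involves, beyond the product of $\psi$-classes at the nodes, correction terms from other strata mapping to the same locus and an automorphism factor from $\mathrm{Aut}(\Gamma_{g_1})\simeq\mathbb{Z}/2$; asserting that a ``generic'' choice of $C^{(1)},C^{(2)}$ makes this nonzero is not a proof. Second, even the off-diagonal step needs the functoriality $f^*:H^*_Z(Y)\to H^*_{f^{-1}(Z)}(X)$ for cohomology with supports, which is fine, but then the other factor $[Z_{g_1'}]$ must lift to $H^*_{Z_{g_1'}}$ with $Z_{g_1'}$ closed in the compact product and disjoint from the boundary---which is exactly the existence problem above.

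The paper sidesteps all of this by choosing globally defined tautological test classes $\psi_1^{3g-3-3i}\cup\kappa_{3i-1}\in H^{6g-8}(\overline{\mathcal{M}}_{g,1},\mathbb{Q})$. Restricting to $[\Gamma_{g_1}]$, the integral factors through $\overline{\mathcal{M}}_{g_1,2}\times\overline{\mathcal{M}}_{g-1-g_1,3}$; the off-diagonal vanishing is then a pure dimension count (the degree of $\kappa_{3i-1}$ exceeds $\dim\overline{\mathcal{M}}_{g_1,2}$ unless $g_1\ge i$, and the degree of $\psi_1^{3g-3-3i}$ exceeds $\dim\overline{\mathcal{M}}_{g-1-g_1,3}$ unless $g_1\le i$), and the diagonal entries reduce to top $\kappa$-intersections $\int_{\overline{\mathcal{M}}_{h}}\kappa_{3h-3}$, which are known to be nonzero. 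If you want to salvage your approach, replace the hypothetical interior cycles by such tautological classes and use dimension rather than support to kill the off-diagonal terms.
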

\begin{proof}
    We proceed by explicit intersection theory. When $g_1\neq i$, the vanishing 
    \[
    \int_{\Gamma_{g_1}} \psi_1^{3g-3-3i}\cup \kappa_{3i-1} =0
    \]
    holds for dimension reasons. On the other hand, we have
    \[\int_{\Gamma_i}\psi_1^{3g-3-3i}\cup \kappa_{3i-1} =\int_{\Mbar_{g-1-i,3}}\psi_1^{3g-3-3i}\cdot\int_{\Mbar_{i,2}}\kappa_{3i-1} = \int_{\Mbar_{g-1-i}}\kappa_{3g-3i-6}\cdot\int_{\Mbar_{i}}\kappa_{3i-3}\]
    which is nonzero by \cite[(28)]{FP}.
\end{proof}

\subsection{Proof of Theorem \ref{thm0.6}}
    Suppose that there is an isomorphism
    \[
    f: H^*(\Jbar^{0,\phi(z)}_{g,1}, \BQ) \rightarrow H^*(\Jbar^{0,\phi(z')}_{g,1}, \BQ)
    \]
of $H^*(\overline{\CM}_{g,1}, \BQ)$-algebras sending
\[
f(\Theta) = a\Theta + b\kappa_{0,1} + \beta, \quad f(\kappa_{0,1}) = s \Theta + t \kappa_{0,1} + \beta'.
\]
By Lemmas \ref{lem:a}, \ref{lem:v}, and \ref{lem:beta}, we must have
\[
s = 0, \quad a = t^2, \quad \beta' = \frac{2b}{t} [\dr^1_g(b;a_1)]_{b^2}.
\]
By an explicit calculation, we get
\begin{equation*}
        [\dr^2_g(b ; a_1)]_{b^4} - \frac{1}{2} ( [\dr^1_g(b; a_1)]_{b^2} )^2 =  \sum_{g_1=1}^{g-1} \frac{(2g_1)^4}{48} \cdot [\Gamma_{g_1}].
    \end{equation*}
Therefore, by Proposition \ref{pro:kappa3} and Lemma \ref{lem:Gamma}, both sides of the equation in Corollary \ref{cor:b} can uniquely be expressed in terms of linear combinations of $\Gamma_{g_1}$ with $g
_1=1,\ldots, g-1$; comparing their coefficients, we obtain
\begin{equation}\label{eq:aphi}
    t \left(\delta_{g_1}(z) - \frac{1}{2}\right) = t^2\left(\delta_{g_1}(z')- \frac{1}{2}\right) - 4b(2g_1),\quad g_1=1,\ldots, g-1.
\end{equation}

To complete the proof, it suffices to find two degrees $z,z'$ such that \eqref{eq:aphi} has no solutions with $b \in \BQ, t \in \BQ^\times$.

In fact, we just take:
\[
z= 0, \quad  z'=2.
\]
When $z=0$, we have $\delta_{g_1}(0) = 0$ for all $g_1$. On the other hand, when $z'=2$, by \eqref{eq:st} and Lemma \ref{lem:phi} we obtain
\[
\delta_{g_1}(2) = \begin{cases}
    0 & \text{if } g_1<\frac{g-1}{2}\\
    1 & \text{if } g_1\ge \frac{g-1}{2}.
\end{cases}
\]
For $g\ge 4$, a nontrivial rational solution $(b,t)$ to \eqref{eq:aphi} must satisfy
\[
-\frac{t}{2}=-\frac{t^2}{2} -8b, \quad -\frac{t}{2}=\frac{t^2}{2} -8b(g-1), \quad -\frac{t}{2}=\frac{t^2}{2} -8b(g-2)
\]
by considering $g_1=1,g-2,g-1$. This is clearly impossible which completes the proof of Theorem \ref{thm0.6}.
\qed

\begin{rmk}
    When $t = 1$, the condition \eqref{eq:aphi} is always satisfied for a pair of integers $z,z'$~with 
    \[
    z' = z + b(2g-2), \quad b\in \BZ. 
    \] 
    This is because of the existence of an isomorphism
    \[\Jbar_{g,1}^{0,\phi(z)}\xrightarrow{\simeq} \Jbar_{g,1}^{0,\phi(z')}, \quad (C,x_1,L) \mapsto (C, x_1, L \otimes \omega_{C,\log}^{\otimes -b}\otimes \CO_C(b(2g-1)x_1)\otimes\CO(\alpha))\]
    inducing the obvious ring isomorphism $f$. Here $\alpha$ is the unique vertical divisor on the universal curve $\mathcal{C}_{g,1} \to \Mbar_{g,1}$ chosen so that $L \otimes \omega_{C,\log}^{\otimes -b}\otimes \CO_C(b(2g-1)x_1)\otimes\CO(\alpha)$ has multidegree $0$ over~$\Mtl_{g,1}$.
\end{rmk}

\subsection{Proof of Lemma \ref{lem:2g1}}\label{sec:4.9}
Now we prove Lemma \ref{lem:2g1}. We provide here the details for the proof of (i); the proof of (ii) is similar and simpler.

   For notational convenience, we set 
    \[
    v := v_{g_1}, \quad w := w_{g_1}, 
    \]
    so that $g(w) = g_1$:
    \begin{equation*}
\Gamma_{g_1}=\Big[\begin{tikzpicture}[baseline=-0.1 cm, vertex/.style={circle,draw,font=\Large,scale=0.5, thick}]
    \node[vertex] (A) at (-1.5,0) [label=below:$g_2$, label=above:$v$] {};
    \node[vertex] (B) at (0,0) [label=below:$g_1$, label=above:$w$] {};
    \draw[thick] (A) to[bend left] (B);
    \draw[thick] (A) to[bend right] (B);
    \draw[thick] (A) to (-2, 0) node[left] {$1$};
    \end{tikzpicture}\Big].
\end{equation*}    
    Let $e = (h,h')$ be one of the two edges of $\Gamma_{g_1}$ with $h$ the half-edge attached to $w$. We consider the morphism \eqref{eq:varphi2}. By Lemma \ref{lem:lowg}(i, ii), we have
    \begin{multline}\label{eq:2g}
        \frac{\Theta^{g-1}\cup \kappa_{0,1}}{(g-1)!}\cup [\Gamma^+_{g_1}] = \sum_{j_1 +j_2 = g-1} \left[ \Gamma_{g_1}^+, \, \frac{\kappa_{0,1}[v]\cup (\Theta[v])^{j_1} \cup (\Theta[w])^{j_2}}{j_1! j_2!} \right] \\
        + \sum_{k_1 + k_2 = g-1} \left[\Gamma_{g_1}^+,\, \frac{\kappa_{0,1}[w]\cup (\Theta[v])^{k_1} \cup (\Theta[w])^{k_2}}{k_1! k_2!} \right].
    \end{multline}

\medskip
{\bf \noindent (i) The first summation.}  We show that the first summation of \eqref{eq:2g} does not contribute to the pushforward along $\pi: \Jbar^{0,\phi(z)}_{g,1} \to \Mbar_{g,1}$.
    
\medskip
    {\bf \noindent Case 1: $j_1\geq g(v)$.}
    We apply the relation \eqref{eq:rel3} at the vertex $v$, with the second marking of $\pic_{g(v),3,\delta(v)}$ identified with the first marking of $\Gamma_{g_1}^+$. Since the universal line bundle is trivialized along the first marking, we have $\xi_2[v] = 0$. In \eqref{eq:rel3}, the terms supported on unstable boundary strata do not contribute, because the vertex $v$ cannot further degenerate to a stratum containing an unstable vertex (recall that any unstable vertex must have degree~$1$) by the stability condition \eqref{eq:st2}. The terms supported on stable boundary strata also do not contribute for dimension reasons. Therefore, only the second term of \eqref{eq:rel3} remains. 
    
    For the monomials $(\Theta[v])^{m} \cup (\psi_2[v])^n$ with $m \ge g(v)+1$, we can apply \eqref{eq:rel5} at the vertex~$v$ repeatedly to reduce to monomials with $m \le g(v)$ and $n \ge 1$. However, since $\psi_2[v] = \pi^*\psi_1$, we obtain
    \[
    \pi_*((\psi_2[v])^n\cup (\Theta[v])^m\cup (\Theta[w])^{j_2}) = \psi_1^n \cup \pi_*(\Theta[v])^m\cup (\Theta[w])^{j_2}) = 0
    \]
    where the last equality holds for dimension reasons. Therefore, monomials with $j_1 \ge g(v)$ do not contribute to the pushforward.
    
\medskip
    {\bf \noindent Case 2: $j_1 < g(v)$.}
    In this case we have $j_2> g(w)$. Hence the relation \eqref{eq:rel5} can be applied at the vertex $w$. Let $\mathrm{st} : \mathfrak{M}_{g(w),2} \to \Mbar_{g(w),2}$ be the stabilization morphism. By \cite[Proposition~3.14]{BS1}, we have
    \begin{equation}\label{eq:psi}
        \mathrm{st}^*\psi_1 = \psi_1 - \Big[\begin{tikzpicture}[baseline=-0.1 cm, vertex/.style={circle,draw,font=\Large,scale=0.5, thick}]
    \node[vertex] (A) at (-1.5,0) [label=below:$0$] {};
    \node[vertex] (B) at (0,0) [label=below:$g(w)$] {};
    \draw[thick] (A) to (B);
    \draw[thick] (A) to (-2, 0) node[left] {$1$};
    \draw[thick] (B) to (0.5,0) node[right]{$2$};
    \end{tikzpicture}\Big].
    \end{equation} 
    
    First, consider the terms without boundary strata. After repeatedly applying the relation~\eqref{eq:rel5}, such terms reduce to a linear combination of
    \[(\Theta[w])^{g(w)} \cup (\xi_2[w])^{j_3}\cup (\mathrm{st}^*\psi_2[w])^{j_4}, \quad j_3+j_4\geq 1.\]
    If $j_4 \geq 1$, then the pushforward of
    \[ \left[\Gamma_{g_1}^+,\, \kappa_{0,1}[v]\cup (\Theta[v])^{j_1}\cup (\Theta[w])^{g(w)} \cup (\xi_2[w])^{j_3}\cup (\mathrm{st}^*\psi_2[w])^{j_4} \right] \]
    is zero for dimension reasons. Hence, we assume $j_4=0$ and $j_3 \geq 1$.
    We identify the half-edge~$h$ with the second marking of $w$, and $h'$ with the first marking of $w$. Since
    \[
    \xi_2[w] = \xi_{h}=\xi_{h'} = \xi_1[v],
    \]
    we may move $(\xi_2[w])^{j_3}$ to the vertex $v$. Then the codimension of the tautological class at~$v$ exceeds $g(v)$; by a similar argument as above, this term does not contribute after the pushforward. 
    
    Next, consider the terms with boundary strata. For dimension reasons, the only possible contributions are divisor classes with an unstable vertex. If the vertex $w$ is decorated by~$\mathrm{st}^*\psi_2[w]$, the pushforward again vanishes for dimension reasons. By Lemma~\ref{lem:lowg}, we are reduced to computing the pushforward of
    \[
    \jmath_*(\kappa_{0,1}[v] \cup (\Theta[v])^{j_1}\cup (\xi_1[v])^{j_2-g(w)-1} \cup (\Theta[w])^{g(w)})
    \]
    where $\jmath : \Jbar_{g(v),3}^{\delta(v), \phi_v} \times \Jbar^{-\delta(v)-1,\phi_w}_{g(w),2} \to \Jbar_{g,1}^{0,\phi}$ is defined in \eqref{eq:stratum}. The cohomology class from the first factor has perversity
    \[2j_1+j_2-g(w)= 2g(v) +g(w) - j_2 < 2g(v).\]
    Therefore, by Lemma~\ref{lem:lowp}, the pushforward of this term also vanishes. 

\medskip
{\bf \noindent (ii) The second summation.}    We return to \eqref{eq:2g}. By a similar argument as above, in the second summation on the right-hand side, only the term
    \[
    \left[\Gamma_{g_1}^+,\, \frac{(\Theta[v])^{g(v)}}{g(v)!} \cup \frac{(\Theta[w])^{g(w)}\cup \kappa_{0,1}[w]}{g(w)!} \right]
    \]
    contributes to the pushforward along $\pi$.
    By \eqref{eq:rel3} on $\pic_{g(w),2,-\delta(v)}$, we have
    \begin{multline}\label{eq:cst1}
        \frac{(\Theta[w])^{g(w)}\cup \kappa_{0,1}[w]}{g(w)!} = 2g(w)\cdot \frac{(\Theta[w])^{g(w)}\cup \xi_h}{g(w)!} - 2g(w)\delta(v) \cdot \frac{(\Theta[w])^{g(w)}\cup \psi_h}{g(w)!} \\
        + \frac{(\Theta[w])^{g(w)}}{g(w)!} \cup \sum_{G_1+G_2 = g(w)}\sum_{e+e' = -\delta(v)} -2G_1e \cdot \Big[\begin{tikzpicture}[baseline=-0.1 cm, vertex/.style={circle,draw,font=\Large,scale=0.5, thick}]
        \node[vertex] (A) at (-1.5,0) [label=below:$G_1$] {$e$};
        \node[vertex] (B) at (0,0) [label=below:$G_2$] {$e'$};
        \draw[thick] (A) to (B);
        \draw[thick] (A) to (-2, 0) node[left] {$h'$};
        \draw[thick] (B) to (0.5,0) node[right]{$h$};
        \end{tikzpicture}\Big].   
    \end{multline}
    In the following, we compute the contribution of each individual term on the right-hand side of \eqref{eq:cst1}.
    
    For the first term of \eqref{eq:cst1}, consider the relation \eqref{eq:rel4} on $\pic_{g(v),3,\delta(v)}$, with the second marking identified by $h'$. Since we have trivialized the universal line bundle along the first marking, it follows that $\xi_1=0$. Therefore,
    \[
    \pi_*\left( (\Theta[v])^{g(v)}\cup (\Theta[w])^{g(w)}\cup \xi_h\right) = \pi_*\left( (\Theta[v])^{g(v)}\cup \xi_{h'} \cup (\Theta[w])^{g(w)}\right) = 0,
    \]
    where the first equality uses $\xi_h = \xi_{h'}$, and the second follows from the fact that the boundary stratum in \eqref{eq:rel4} does not contribute under the stability condition \eqref{eq:st2}. This shows that the first term of \eqref{eq:cst1} does not contribute to the pushforward.

    For the second term of \eqref{eq:cst1}, we have
    \[
    \begin{aligned}
        \pi_*\left[\Gamma_{g_1}^+,\, \frac{(\Theta[v])^{g(v)}\cup (\Theta[w])^{g(w)}\cup \psi_h}{g(v)! g(w)!} \right]  &= \pi_*\left[\Gamma_{g_1}',\, \frac{(\Theta[v])^{g(v)}\cup (\Theta[w])^{g(w)}}{g(v)! g(w)!} \right] \\
        &= [\Gamma_{g_1}],
    \end{aligned}
    \]
    where the first equality follows from \eqref{eq:psi} together with the projection formula, and the second from Lemma~\ref{lem:lowg}. Hence the second term of \eqref{eq:cst1} contributes $-2g(w)\delta(v) \cdot [\Gamma_{g_1}]$. 

    For the third term of \eqref{eq:cst1}, the boundary stratum contributes to the pushforward only when it is unstable for dimension reasons; hence $G_1 = 0$ or $G_1 = g(w)$. Since the coefficient is proportional to $G_1$, we only consider the case $G_1 = g(w)$; hence $e = -\delta(v)-1$ by \eqref{eq:st2}. Therefore, the third term of \eqref{eq:cst1} contributes $2g(w)(\delta(v)+1)\cdot [\Gamma_{g_1}]$.

    After summing all contributions of \eqref{eq:cst1}, we obtain the desired formula.\qed

\appendix

\section{Motives of torsors under abelian schemes}


We consider the relative Chow motive with $\mathbb{Q}$-coefficients of a smooth abelian fibration $f: T\to B$ over a nonsingular base variety $B$; that is, $f$ is proper and its geometric fibers are abelian varieties. The degree $d$ relative Jacobian $J_C^d \to B$ associated with a family of nonsingular curves $C\to B$ is an example. Generally, any smooth abelian fibration $f: T\to B$ is a torsor under an abelian scheme $g: A \to B$; see \cite[Proposition 2.1]{JM}. We prove in the following that, from the motivic perspective, a torsor behaves exactly as an abelian scheme. The argument was obtained in correspondence with Dan Petersen and we thank him for these discussions.

\begin{thm}\label{thm0}
Assume that $B$ is a nonsingular variety over a field. Let $f: T \to B$ be a torsor under an abelian scheme $g: A \to B$. Then there is an isomophism of relative Chow motives
    \[
    h(T/B) \simeq h(A/B)
    \]
which respects the cup-product. In particular, the relative Chow motive $h(T/B)$ admits a multiplicative Chow--K\"unneth decomposition.
\end{thm}

\begin{proof}
By a general result of Raynaud \cite[Corollary XIII 2.4 and Proposition XIII 2.6]{Ray}, for any regular and quasicompact base $B$ and any abelian scheme $A \to B$, the first \'etale cohomology~$H^1_{\text{\'et}}(B, A)$ is a torsion group. This implies the existence of a positive integer $N$ for which the pushforward torsor
\[
[N]_*T := T \overset{A}{\times} A
\]
via the ``multiplication by $N$'' map $[N]: A \to A$ is isomorphic to the trivial torsor $A$. Here the $A$-action on the second factor $A$ is defined through $[N]$.

The natural morphism $T \to [N]_*T$ then yields a morphism
\[
\varphi: T \to A
\]
which is fiberwise an isogeny up to translation over $B$.

We consider the morphism of relative Chow motives 
\begin{equation}\label{motives}
\varphi^*: h(A/B) \rightarrow h(T/B).
\end{equation}
Since (\ref{motives}) is induced by the pullback of a morphism, it automatically preserves the cup-product, and it is an isomorphism by the conservativity result \cite[Lemma A.6]{AHPL}.

More precisely, we observe that it is enough to prove the isomorphism after base change to the algebraic closure of the base field. When the base field is algebraically closed (or more generally a perfect field), the category of relative Chow motives over~$B$, in the sense of either~\cite{DM} or \cite{CH}, admits a fully faithful embedding in the category of Beilinson motives $\mathrm{DM}_{\RB}(B)$ of~\cite{CD}; see \emph{e.g.}~\cite{Jin}. By the comparison theorem \cite[Theorem~16.2.18]{CD} between $\mathrm{DM}_{\RB}(B)$ and the category of \'etale motives $\mathrm{DA}_{\text{\'et}}(B, \BQ)$ with $\mathbb{Q}$-coefficients (and without transfers), and by \cite[Lemma~A.6]{AHPL}, it suffices to verify that \eqref{motives} is an isomorphism when pulled back to any geometric point of $B$. The latter statement is immediate since $\varphi$ is fiberwise an isogeny up to translation.

Finally, the last claim follows from the construction of the multiplicative Chow--K\"unneth decomposition for abelian schemes in \cite{DM}.
\end{proof}

\end{document}